\documentclass[11pt]{amsart}
\usepackage{amssymb,amsmath,amsfonts,amscd,euscript}
\usepackage{hyperref}
\newcommand{\nc}{\newcommand}

\numberwithin{equation}{section}
\newtheorem{thm}{Theorem}[section]
\newtheorem{prop}[thm]{Proposition}
\newtheorem{lem}[thm]{Lemma}
\newtheorem{cor}[thm]{Corollary}
\newtheorem{rem}[thm]{Remark}
\newtheorem{definition}[thm]{Definition}
\newtheorem{example}[thm]{Example}
\newtheorem{dfn}[thm]{Definition}

\nc{\gl}{\mathfrak{gl}}
\nc{\GL}{\mathfrak{GL}}
\nc{\g}{\mathfrak{g}}
\nc{\gh}{\widehat\g}
\nc{\h}{\mathfrak{h}}
\nc{\la}{\lambda}
\nc{\al}{\alpha }
\nc{\be}{\beta }
\nc{\ve}{\varepsilon }
\nc{\om}{\omega }

\nc{\ta}{\theta}
\nc{\ch}{{\mathop {\rm ch}}}
\nc{\Tr}{{\mathop {\rm Tr}\,}}
\nc{\Id}{{\mathop {\rm Id}}}
\nc{\ad}{{\mathop {\rm ad}}}
\nc{\bra}{\langle}
\nc{\ket}{\rangle}
\nc{\x}{{\bf x}}
\nc{\bm}{{\bf m}}
\nc{\bs}{{\bf s}}
\nc{\br}{{\bf r}}
\nc{\bb}{{\bf b}}
\nc{\bk}{{\bf k}}
\nc{\bp}{{\bf p}}
\nc{\pa}{\partial}
\nc{\ld}{\ldots}
\nc{\cd}{\cdots}
\nc{\hk}{\hookrightarrow}
\nc{\T}{\otimes}
\nc{\gr}{\mathrm{gr}}
\nc{\ov}{\overline}
\newcommand{\bin}[2]{\genfrac{(}{)}{0pt}{}{#1}{#2}}

\nc{\cO}{\mathcal O}
\nc{\msl}{\mathfrak{sl}}
\nc{\mgl}{\mathfrak{gl}}
\nc{\U}{\mathrm U}
\nc{\V}{\EuScript V}
\nc{\cL}{\mathcal{L}}
\nc{\Res}{\mathrm{Res\ }}

\newcommand{\bZ}{{\mathbb Z}}

\newcommand{\bP}{{\mathbb P}}

\newcommand{\fh}{{\mathfrak h}}

\newcommand{\fg}{{\mathfrak g}}

\newcommand{\fb}{{\mathfrak b}}

\newcommand{\fn}{{\mathfrak n}}

\begin{document}

\title[Semi-infinite Pl\"ucker relations]
{Semi-infinite Pl\"ucker relations and Weyl modules}

\author{Evgeny Feigin}
\address{Evgeny Feigin:\newline
Department of Mathematics,\newline
National Research University Higher School of Economics,\newline
Usacheva str. 6, 119048, Moscow, Russia,\newline
{\it and }\newline
Skolkovo Institute of Science and Technology, Skolkovo Innovation Center, Building 3,
Moscow 143026, Russia
}
\email{evgfeig@gmail.com}

\author{Ievgen Makedonskyi}
\address{Ievgen Makedonskyi:\newline
Max Planck Institute for Mathematics, Vivatgasse 7, 53111, Bonn, Germany
\newline
{\it and} \newline
Department of Mathematics,\newline
National Research University Higher School of Economics,\newline
Usacheva str. 6, 119048, Moscow, Russia
}
\email{makedonskii\_e@mail.ru}

\begin{abstract}
The goal of this paper is twofold. First, we write down the semi-infinite Pl\"ucker relations, describing
the Drinfeld-Pl\"ucker embedding of the (formal version of) semi-infinite flag varieties in type A. Second,
we study the homogeneous coordinate ring, i.e. the quotient by the ideal generated by the semi-infinite
Pl\"ucker relations. We establish the isomorphism with the algebra of dual global Weyl modules and derive
a new character formula.
\end{abstract}

\maketitle

\section*{Introduction}
The goal of this paper is to study the homogeneous coordinate ring of the formal version of the semi-infinite flag varieties
of type $A$ (see \cite{FiMi}). More precisely, we consider the Drinfeld--Pl\"ucker embedding of the
semi-infinite flag variety, compute explicitly the reduced scheme structure and study the
quotient algebra with respect to the ideal of defining relations. Before giving more detailed summary of our
results, we describe the finite-dimensional analogue of the story (see e.g. \cite{F}).

Let $G=SL_n=SL_n(\mathbb{K})$ ($\mathbb{K}$ is an algebraically closed field of characteristic $0$)
and let $B\subset G$ be a Borel subgroup. The quotient $G/B$ is known to be isomorphic to
the variety of complete flags in an $n$-dimensional vector space. The Pl\"ucker embedding realizes the flag variety
inside the product of projective spaces of all fundamental representations of $SL_n$, which are isomorphic
to the wedge powers of the vector representation in type $A$. In particular, the coordinates $X_I$ on the $k$-th
fundamental representation are labeled by the cardinality $k$ subsets of the set $\{1,\dots,n\}$.
The (quadratic) Pl\"ucker relations describe the image
of this embedding, i.e. they generate the ideal $J_n$ of all multi-homogeneous polynomials vanishing
on the image of the Pl\"ucker embedding. By definition, $J_n$ is the ideal of relations satisfied by
the minors of the matrices from $SL_n$. A very important property of $J_n$ is that the quotient of the polynomial
ring $R_n$ in variables $X_I$ modulo the ideal $J_n$ is isomorphic to the direct sum of (dual) irreducible finite-dimensional
representations of $SL_n$. Finally, let us mention that there exists a remarkable basis of $R_n$ consisting of
monomials in $X_I$. The monomials in this basis are parametrized by the semi-standard tableaux.

Our goal is to generalize the finite-dimensional picture to the semi-infinite settings \cite{FiMi,BF1,BF2,Kat}.
This means that the group $G$ is replaced with the group $G[[t]]$ and all the representations $V$ are replaced with
the infinite-dimensional spaces $V[[t]]=V\otimes \mathbb{K}[[t]]$. The main geometric object ${\bf Q}$ we are interested in
is defined as follows. Let $V(\omega_k)$, $k=1,\dots,n-1$ be the fundamental irreducible modules of $SL_n$.
Let $\mathring{\bf Q}\subset \prod_{k=1}^{n-1} \bP(V(\omega_k)[[t]])$ be the $G[[t]]$-orbit through the product
of highest weight lines (with the reduced scheme structure). Then the scheme ${\bf Q}$ is defined as the Zariski
closure of $\mathring{\bf Q}$ inside the
product of projective spaces. We note that the (formal) Drinfeld-Pl\"ucker data (see \cite{FiMi}) defines the
(non-reduced) scheme with the same support (a.k.a. the projectivization of the arc scheme of the closure of
the basic affine scheme). The scheme ${\bf Q}$ was studied in \cite{KNS}. In particular, it was shown that
the multi-homogeneous coordinate ring of ${\bf Q}$ is isomorphic to the direct sum of dual global Weyl modules \cite{CFK,CI}.
The global Weyl modules are (infinite dimensional) cyclic representations of the current algebra ${\rm Lie}(G)\T \mathbb{K}[t]$ defined
by the condition that the constant part generate finite dimensional ${\rm Lie}(G)$ module with the prescribed highest weight.
Our main results are as follows:
\begin{itemize}
\item we determine the reduced scheme structure of ${\bf Q}$, i.e. we write down the generators of the ideal of relations
of $\mathring{\bf Q}$ (Proposition \ref{propsnakeplueckerequation});
\item we derive a new explicit formula for the characters of the Weyl modules (Theorem \ref{main});
\item we show that the homogeneous coordinate ring of ${\bf Q}$ is isomorphic to the direct sum of dual global Weyl modules
(Corollary \ref{cormain});
\item we give a monomial basis of the homogeneous coordinate ring; the labeling set for the monomials extends the
semi-standard tableaux (Corollary \ref{corssst}).
\end{itemize}
In particular, Corollary \ref{cormain} gives a new proof in type $A$ of the Kato-Naito-Sagaki theorem on the homogeneous
coordinate ring of the semi-infinite flag varieties. Our approach is very different compared to the one used in \cite{KNS}:
we write down explicitly the ideal of relations for the Drinfeld-Pl\"ucker embedding of ${\bf Q}$ and describe the quotient.

Our main players are two algebras. The first algebra ${\mathcal M}$ is generated by the coefficients of minors of the elements
of the group
$G[[t]]$ (to be precise, we consider the minors supported on the first $k$ rows for some $k$; each such a minor is a Taylor
series in $z$ and ${\mathcal M}$ is generated by all the coefficients of these series). The second algebra
${\mathbb W}$ is defined as the direct sum of dual global Weyl modules for all dominant integral weights (see \cite{Kat}).
Our strategy is as follows: we first write down an explicit set of semi-infinite Pl\"ucker relations for the algebra
${\mathcal M}$, then  show that ${\mathcal M}$ surjects onto  ${\mathbb W}$ and finally prove that
the character of ${\mathbb W}$ coincides with the character of the quotient by the ideal generated by the
semi-infinite Pl\"ucker relations.

Let us add several comments on our results. First, we recall that the classical Pl\"ucker relations are linear combinations of
the quadratic monomials of the form $X_{I_1}X_{I_2}$.
We note that the coordinates on the semi-infinite space $V(\omega_k)[[t]]$ are of the form $X_I^{(l)}$, where $I$ is
a cardinality $k$ subset of $\{1,\dots,n\}$ and $l$ is a non-negative integer. To a variable $X_I$ we attach the generating
function $X_I(s)=\sum_{l\ge 0} X_{I}^{(l)}s^l$. Then a part of semi-infinite Pl\"ucker relations can be obtained in the following
way: one takes a classical Pl\"ucker relation, replaces each variable $X_I$ with the generating function $X_I(s)$ and
collect all coefficients of the resulting formal series in the variable $s$ (this is exactly what one is doing in order to pass
to the arc scheme, see e.g. \cite{Mu}). Then these coefficients belong to the
ideal vanishing on ${\mathring{\bf Q}}$. However, we show that these relations do not generate the whole vanishing ideal.
The complete list of generators is given in Theorem \ref{minorplueckerequation}.

Second, recall that in the classical theory an important role is played by the semi-standard Young tableaux.
In short, a tableau is semi-standard if its columns increase from left to right (the longer columns are on the left).
To compare two columns one compares the entries belonging to one row: an entry of the smaller column
can not exceed the corresponding entry of the larger one (for each row). In particular, this order is partial and
there are many uncomparable pairs of columns. We introduce a measure $k(\sigma,\tau)$ of how much uncomparable
the columns $\sigma$ and $\tau$ are. Roughly speaking, the $k(\sigma,\tau)$ is equal to the number of times
the signs $"<"$ and $">"$ got changed when one compares the entries of $\sigma$ and $\tau$ row by row moving from bottom to
top. The complete definition is given in Definition \ref{kdefinition}. The quantity $k(\sigma,\tau)$ is used in our
description of the semi-infinite Pl\"ucker relations as well as in the character formula for the Weyl
modules.

Third, let us comment on the character formula for the global Weyl modules. The characters of global (infinite-dimensional)
and local (finite-dimensional) Weyl modules
differ by a simple factor (\cite{CFK,N}). We derive a character formula for the local Weyl modules for the Lie algebra $\msl_n$
by embedding it into the Weyl module for $\msl_{2n}$. We give a construction of a basis of this embedded module and
compute its character.
The character of the local Weyl module $W(\mu)$ coincides with the specialization of nonsymmetric Macdonald polynomial
$E_{w_0(\mu)}(X,q,0)$ (see \cite{M,Ch,N,S,I,CI}). Thus we obtain a combinatorial formula for such specializations for arbitrary
dominant  weight $\mu$. Note that in
\cite{FM} we specialized the combinatorial formula of Haglund, Haiman ond Loehr (see \cite{HHL}) and
gave a recurrent formula for the nonsymmetric Macdonald polynomials at $t=0$. However, we were
able to obtain an explicit character formula only for the Weyl modules $W(\mu)$, where $\mu$ is a linear combination
of $\omega_1$ and $\omega_{n-1}$.

Finally let us add a remark due to Michael Finkelberg.
In the works \cite{BF1,BF2,BF3}, the authors study the rings of functions on zastava schemes (for the curve ${\mathbb A}^1$) and the spaces of
sections of line bundles over quasimaps' schemes (for the curve ${\mathbb P}^1$); in particular, the characters of these spaces are computed. The results
of this paper imply that these zastava and quasimaps as schemes representing the corresponding moduli problems, are {\em non reduced} (for $G=SL(n),\ n\geq5$).
The zastava and quasimaps' spaces studied in \cite{BF1,BF2,BF3} are defined as the corresponding {\em reduced} schemes, i.e.\ {\em varieties}. In particular,
the character formulas of \cite{BF1,BF2,BF3} hold for the spaces of sections over the {\em reduced} zastava and quasimaps' varieties.

The paper is organized as follows. In Section \ref{generalities} we collect the notation and give generalities on the
representation theoretic,
combinatorial and geometric structures we use in the paper. In Section \ref{Pluecker} we write down the generators of the
ideal of relations for the semi-infinite flag varieties; as a consequence we derive an estimate for the character of the
global Weyl modules. In Section \ref{Weyl} we prove that formula from Section \ref{Pluecker} for the characters of the
Weyl modules holds and thus finalize the proof. In Appendix \ref{app} we describe the whole story for the simplest symplectic algebra
$\mathfrak{sp}_4$. At the moment we are not able to work out the case of general $\mathfrak{sp}_{2n}$, since the
Pl\"ucker relations in type $C$ are more complicated than in type $A$.

\section{Generalities}\label{generalities}
\subsection{Finite-dimensional picture}
Let $\mathfrak{g}$ be a simple finite dimensional Lie algebra over an algebraically closed field $\mathbb{K}$ of characteristic $0$.
Let $\Delta$ be the root system of $\mathfrak{g}$, $\Delta=\Delta_+\sqcup\Delta_-$ the union of positive and negative roots.
Let $\mathfrak{g}=\mathfrak{n}_-\oplus \mathfrak{h} \oplus \mathfrak{n}_+$ be the Cartan decomposition of $\fg$.
For a positive root $\al$ let $e_\al\in\fn_+$ and $f_{-\al}\in\fn_-$ be the Chevalley generators.
The weight lattice $X$ contains the positive part $X_+$, containing all fundamental weights $\omega_k$, $k=1,\dots,{\rm rk}(\fg)$.
For $\la\in X_+$ we denote by $V(\la)$ the irreducible
highest weight $\fg$-module with highest weight $\la$.

For any two weights $\lambda, \mu\in X_+$ there is an embedding
\[V(\lambda+\mu) \hookrightarrow V(\lambda)\otimes V(\mu).\]
Dualizing this injection we obtain the surjection:
\[V(\lambda)^*\otimes V(\mu)^* \twoheadrightarrow V(\lambda+\mu)^*.\]
These surjections define a multiplication on the space $\mathbb{V}=\bigoplus_{\lambda \in X_+}V(\lambda)^*$
and make $\mathbb{V}$ into an associative and commutative algebra.

Take the corresponding algebraic groups $G\supset B \supset U$, where $B$ is a Borel subgroup and
$N$ is a unipotent subgroup. Then we have:
\[G/U\simeq {\rm Spec}\mathbb{V}.\]
The algebra $\mathbb{V}$ has the natural $X_+$-grading. The projective spectrum of $\mathbb{V}$ with respect to
this grading is isomorphic to the flag variety $G/B$ (see e.g. \cite{Kum}).

Let $G=SL_n$, $\fg=sl_n$. We take $n^2$ variables $z_{ij}$, $1 \leq i,j \leq n$ and consider
the subalgebra of $\mathbb{K}[z_{i,j}]_{i,j=1}^n$ generated by determinants
\[\left|\left(
                                                       \begin{array}{cccc}
                                                         z_{1i_1} & z_{1i_2} & \cdots & z_{1i_k} \\
                                                         z_{2i_1} & z_{2i_2} & \cdots & z_{2i_k} \\
                                                         \vdots & \vdots & \ddots & \cdots \\
                                                         z_{ki_1} & z_{ki_2} & \cdots & z_{ki_k} \\
                                                       \end{array}
                                                     \right)
\right|\]
for all $k\ge 1$, $1 \leq i_1<i_2<\dots <i_k \leq n$. Then this algebra is isomorphic to $\mathbb{V}$.
The algebra  $\mathbb{V}$ is generated by dual fundamental modules $V(\omega_k)^*$ which satisfy (quadratic) Pl\"ucker relations.
There is a remarkable basis of a module $V(\lambda)^*$ given by semistandard tableaux of shape $\lambda$
(see \cite{F}).

In Section \ref{Pluecker} we describe the semi-infinite analogue of these constructions.

\subsection{Weyl modules}
In this paper we are mainly interested in representations of the current algebra $\fg \otimes \mathbb{K}[t]$
which is a maximal parabolic subalgebra of the affine Kac-Moody Lie algebra attached to $\fg$.
For $x\in\fg$ we sometimes denote the element $x\T 1$ simply by $x$.

\begin{definition}\label{weylmodules}\cite{CP}
Let $\lambda\in X_+$. Then the global Weyl module
$\mathbb{W}(\lambda)$ is the cyclic $\fg \otimes \mathbb{K}[t]$ module with a generator $v_\lambda$ and the following
defining relations:
\begin{gather}
\label{weylvanishing1}
(e_{\alpha}\otimes t^k) v_{\lambda}=0, \ \alpha \in \Delta_+, ~k \geq 0;\\
\label{weylbound1}
(f_{-\alpha}\otimes 1)^{\langle \alpha^\vee, \lambda \rangle+1} v_{\lambda}=0, \ \alpha \in \Delta_+.
\end{gather}
Local Weyl modules ${W}(\lambda)$ are defined by previous conditions and one additional condition:
\begin{equation}
\label{weylvanishing0}
h\T t^k v_{\lambda}=0 \text{ for all } h\in\fh, k>0.\\
\end{equation}
\end{definition}

Weyl modules are the natural analogues of finite-dimensional simple $\fg$-modules $V(\lambda)$. They are graded by the degree of $t$:
\[\mathbb{W}(\mu)=\bigoplus_{k=0}^\infty \mathbb{W}(\mu)^{(k)}\]
with finite-dimensional homogeneous components. Therefore we can define the restricted dual module:
\[\mathbb{W}(\mu)^*=\bigoplus_{k=0}^\infty (\mathbb{W}(\mu)^{(k)})^*.\]

Global Weyl modules have the following properties.

\begin{lem}\label{cocyclic}
$\mathbb{W}(\mu)^*$ is cocyclic, i. e. there exists an element (cogenerator) $v^*\in\mathbb{W}(\mu)^*$
such that for any element $u\in\mathbb{W}(\mu)^*$
there exists an element $f \in U(\fg \otimes \mathbb{K}[t])$ such that $fu=v^*$. The set of cogenerators coincides
with $({\mathbb{W}(\mu)^{(0)}})^*\simeq V(\mu)^*$ .
\end{lem}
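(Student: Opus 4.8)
The plan is to exploit the duality between cyclicity and cocyclicity coming from the grading of $\mathbb{W}(\mu)$ by the degree in $t$. Write $A=U(\fg\otimes\mathbb{K}[t])$, graded by $t$-degree with $A_0=U(\fg)$, and regard $M:=\mathbb{W}(\mu)=\bigoplus_{k\ge0}M^{(k)}$ (here $M^{(k)}=\mathbb{W}(\mu)^{(k)}$) as a graded cyclic $A$-module generated by $v_\mu\in M^{(0)}$, with finite-dimensional homogeneous components. On the restricted dual $M^*=\bigoplus_k (M^{(k)})^*$ the Lie algebra acts by $(x\phi)(m)=-\phi(xm)$, so a homogeneous element of $A$ of $t$-degree $e$ maps $(M^{(k)})^*$ into $(M^{(k-e)})^*$; that is, the $A$-action on $M^*$ \emph{lowers} degree. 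In particular $A\cdot(M^{(0)})^*\subseteq (M^{(0)})^*$, since $A_{>0}$ pushes $(M^{(0)})^*$ into negative, hence zero, degrees.

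First I would identify the degree-zero component. Since $M=Av_\mu$ and $v_\mu$ has degree $0$, one has $M^{(0)}=A_0v_\mu=U(\fg)v_\mu$; and the only defining relations \eqref{weylvanishing1}--\eqref{weylbound1} surviving in degree $0$ are $e_\al v_\mu=0$ (the $k=0$ case of \eqref{weylvanishing1}) and $(f_{-\al})^{\langle\al^\vee,\mu\rangle+1}v_\mu=0$. These are exactly the defining relations of the irreducible $\fg$-module $V(\mu)$, so $M^{(0)}\cong V(\mu)$ and $(M^{(0)})^*\cong V(\mu)^*$.

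The key step, which I expect to be the crux of the argument, is the following generation statement: every nonzero $m_0\in M^{(0)}$ already generates all of $M$. Indeed, $V(\mu)=U(\fg)m_0$ by irreducibility, so $v_\mu\in U(\fg)m_0\subseteq Am_0$ and hence $M=Av_\mu\subseteq Am_0$. Comparing homogeneous components then gives $A_Km_0=M^{(K)}$ for every $K\ge0$.

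Finally I would deduce cocyclicity. Fix any nonzero $v^*\in(M^{(0)})^*$ and let $0\ne u=\sum_{k\le K}u_k\in M^*$ have top homogeneous part $u_K\ne0$. For $f\in A_K$ all lower terms are annihilated (they would land in negative degree), so $fu=fu_K\in(M^{(0)})^*$ with $(fu_K)(m)=-u_K(fm)$. The image $\{fu_K:f\in A_K\}\subseteq(M^{(0)})^*$ has trivial annihilator in $M^{(0)}$: if $u_K$ vanished on $A_Km_0$ for some $m_0\ne0$, then by the generation statement it would vanish on all of $M^{(K)}=A_Km_0$, forcing $u_K=0$. Hence this image is all of $(M^{(0)})^*$, and some $f$ gives $fu=v^*$; thus every nonzero $v^*\in(M^{(0)})^*$ is a cogenerator. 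Conversely, if $w^*$ is a cogenerator, it is reachable from some nonzero $u\in(M^{(0)})^*$, and since $A$ preserves $(M^{(0)})^*$ we get $w^*\in(M^{(0)})^*$. Therefore the set of cogenerators is exactly $(M^{(0)})^*\cong V(\mu)^*$.
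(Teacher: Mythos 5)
Your proof is correct and is essentially the paper's argument written out in full: the paper disposes of this lemma in one line (``a direct consequence of the fact that the Weyl module is cyclic''), and your graded-duality argument --- the $U(\fg\T\mathbb{K}[t])$-action lowers $t$-degree on the restricted dual, any nonzero $m_0\in \mathbb{W}(\mu)^{(0)}\cong V(\mu)$ regenerates the cyclic vector by irreducibility, hence the top homogeneous part of any nonzero functional can be pushed onto all of $(\mathbb{W}(\mu)^{(0)})^*$ --- is exactly the content of that remark. The only cosmetic caveat is that for $f\in A_K$ with $K>1$ the formula $(fu_K)(m)=-u_K(fm)$ should read $(f\phi)(m)=\phi(S(f)m)$ with $S$ the antipode; since $S$ preserves the $t$-grading, this changes nothing in the argument.
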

\begin{proof}
This is a direct consequence of the fact that Weyl module is cyclic.
\end{proof}

For a dominant weight $\lambda=\sum_{k=1}^{{\rm rk}(\fg)} m_k\omega_k$ we define
\[(q)_\lambda=\prod_{k=1}^{{\rm rk}}\prod_{i=1}^{m_k}(1-q^i).\]

Each Weyl module is graded by the $\fh$-weights and by $t$-degree. For any $\fg\otimes \mathbb{K}[t]$-module $U$ with such a grading
let $U(\nu,m)$, $\nu\in\fh^*$, $m\in\bZ$ be the weight space of the corresponding weight.
\begin{definition}
\[ \ch U=\sum_{\nu,m}\dim U(\nu,m)x^{\nu}q^m.\]
\end{definition}

\begin{prop}\cite{CFK,N}\label{localglobal}
\[\ch \mathbb{W}(\mu)=\frac{\ch {W}(\mu)}{(q)_\mu}.\]
\end{prop}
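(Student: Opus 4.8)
The plan is to realize the global Weyl module as a \emph{free} module over a commutative algebra whose graded Hilbert series is exactly $(q)_\mu^{-1}$, so that passing to the local Weyl module divides the character by this factor. Throughout write $\mu=\sum_{k=1}^{{\rm rk}(\fg)} m_k\omega_k$.

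First I would introduce the commutative algebra $\mathbf{A}_\mu$ of $\fg\otimes\mathbb{K}[t]$-endomorphisms of $\mathbb{W}(\mu)$, acting on the right. Concretely, the elements $h\otimes t^r$ with $h\in\fh$ and $r\ge 0$ preserve the $\mu$-weight space $\mathbb{W}(\mu)_\mu$, which is cyclic on $v_\mu$; the quotient of $\U(\fh\otimes\mathbb{K}[t])$ by the annihilator of $v_\mu$ is the algebra $\mathbf{A}_\mu$, and $\mathbb{W}(\mu)_\mu=\mathbf{A}_\mu\cdot v_\mu$ is free of rank one. The key identification, due to Chari--Pressley, is
\[ \mathbf{A}_\mu\cong\bigotimes_{k=1}^{{\rm rk}(\fg)}\mathbb{K}[x_{k,1},\dots,x_{k,m_k}]^{S_{m_k}}, \]
the tensor product of symmetric-polynomial algebras, where each variable $x_{k,j}$ sits in $t$-degree $1$ and in weight $0$. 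The vanishing relation \eqref{weylbound1} is precisely what cuts the highest weight space down to this symmetric quotient.

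Second I would compute the graded Hilbert series of $\mathbf{A}_\mu$. Since $\mathbb{K}[x_1,\dots,x_m]^{S_m}$ is a polynomial algebra on the elementary symmetric functions $e_1,\dots,e_m$ with $\deg e_j=j$, its Hilbert series is $\prod_{j=1}^m (1-q^j)^{-1}$. Taking the product over $k$ gives $\ch\mathbf{A}_\mu=(q)_\mu^{-1}$; the $x^\nu$-grading is trivial because $\mathbf{A}_\mu$ lives entirely in weight $0$, so $\mathbf{A}_\mu$ contributes only to the $q$-variable. Granting that $\mathbb{W}(\mu)$ is free over $\mathbf{A}_\mu$ with
\[ \mathbb{W}(\mu)\otimes_{\mathbf{A}_\mu}\bigl(\mathbf{A}_\mu/\mathbf{A}_\mu^{+}\bigr)\cong W(\mu), \]
where $\mathbf{A}_\mu^{+}$ is the augmentation ideal (this is exactly the specialization at $t$-torsion giving condition \eqref{weylvanishing0}), the graded character factorizes as $\ch\mathbb{W}(\mu)=\ch\mathbf{A}_\mu\cdot\ch W(\mu)=\ch W(\mu)/(q)_\mu$, which is the assertion.

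The main obstacle is the freeness of $\mathbb{W}(\mu)$ over $\mathbf{A}_\mu$. The standard route has three steps: (i) finite generation of $\mathbb{W}(\mu)$ over $\mathbf{A}_\mu$, which follows from cyclicity of the Weyl module together with the PBW theorem applied to $\fn_-\otimes\mathbb{K}[t]$; (ii) identification of the fibre $\mathbb{W}(\mu)\otimes_{\mathbf{A}_\mu}\mathbb{K}_a$ over a point $a\in\mathrm{Spec}\,\mathbf{A}_\mu$ with a tensor product of local Weyl modules attached to the multiset of points recorded by $a$; and (iii) the observation that the dimension of this fibre is independent of $a$ and equals $\dim W(\mu)$. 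For a finitely generated graded module over a graded polynomial ring, constancy of fibre dimension forces freeness, and the character formula drops out. The genuinely deep input is (iii), the constancy of local Weyl module dimensions (the Weyl module dimension theorem), established in \cite{CFK,N} and the references cited there; this is where the real content of the proposition resides, and it is the reason we cite the result rather than reprove it here.
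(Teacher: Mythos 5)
The paper offers no proof of this proposition — it is quoted from \cite{CFK,N} — so the only meaningful comparison is with the argument in those references, and your proposal is precisely that argument: realize $\mathbb{W}(\mu)$ as a free graded module over the highest-weight algebra $\mathbf{A}_\mu\cong\bigotimes_k\mathbb{K}[x_{k,1},\dots,x_{k,m_k}]^{S_{m_k}}$, whose Hilbert series is $(q)_\mu^{-1}$, with fibre over the augmentation ideal equal to $W(\mu)$. Your outline is correct, and you rightly isolate the genuinely deep input (constancy of the local Weyl module dimensions, hence freeness) as the part that must be imported from the literature rather than reproved.
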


\begin{lem}\label{weylinjection}\cite{Kat}
The $\fg \otimes \mathbb{K}[t]$-submodule of $\mathbb{W}(\la)\otimes\mathbb{W}(\mu)$
generated by $v_{\lambda}\otimes v_{\mu}$ is
isomorphic to $\mathbb{W}( \lambda+\mu)$.
\end{lem}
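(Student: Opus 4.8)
The plan is to produce a surjection $\phi\colon\mathbb{W}(\la+\mu)\twoheadrightarrow U$ onto the submodule $U=U(\fg\T\mathbb{K}[t])\cdot(v_\la\T v_\mu)$ and then to prove it is injective by a character count. For the surjection I would check that $v_\la\T v_\mu$ satisfies the defining relations of $\mathbb{W}(\la+\mu)$ from Definition \ref{weylmodules}. Since $\fg\T\mathbb{K}[t]$ acts on the tensor product through the (primitive) coproduct, relation (\ref{weylvanishing1}) is immediate: $(e_\al\T t^k)(v_\la\T v_\mu)=(e_\al\T t^k)v_\la\T v_\mu+v_\la\T(e_\al\T t^k)v_\mu=0$. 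Writing $a=\langle\al^\vee,\la\rangle$ and $b=\langle\al^\vee,\mu\rangle$, so that $\langle\al^\vee,\la+\mu\rangle+1=a+b+1$, the binomial expansion gives
$$(f_{-\al}\T 1)^{a+b+1}(v_\la\T v_\mu)=\sum_{i=0}^{a+b+1}\binom{a+b+1}{i}\bigl((f_{-\al}\T 1)^i v_\la\bigr)\T\bigl((f_{-\al}\T 1)^{a+b+1-i}v_\mu\bigr).$$
In each summand either $i\ge a+1$, killing the first factor by (\ref{weylbound1}) for $\la$, or $i\le a$ and hence $a+b+1-i\ge b+1$, killing the second factor by (\ref{weylbound1}) for $\mu$; thus the left-hand side vanishes. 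As $v_\la\T v_\mu$ has weight $\la+\mu$, the universal property of $\mathbb{W}(\la+\mu)$ yields the desired surjection $\phi$, whence $\ch U\le\ch\mathbb{W}(\la+\mu)$ coefficientwise.

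It remains to prove the reverse inequality $\ch U\ge\ch\mathbb{W}(\la+\mu)$. Here I would use the commutative endomorphism algebra $\mathbb{A}_\nu=\mathrm{End}_{\fg\T\mathbb{K}[t]}(\mathbb{W}(\nu))$, which for $\fg=\msl_n$ is the symmetric polynomial ring in the spectral variables attached to $\nu$, together with the structural fact that $\mathbb{W}(\nu)$ is \emph{free} over $\mathbb{A}_\nu$ and satisfies $\mathbb{W}(\nu)\otimes_{\mathbb{A}_\nu}(\mathbb{A}_\nu/\fm)\cong W(\nu)$ for the augmentation ideal $\fm$ (\cite{CP,CFK}); this freeness is exactly what underlies the factorization of Proposition \ref{localglobal}. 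The $\fg\T\mathbb{K}[t]$-module $\mathbb{W}(\la)\T\mathbb{W}(\mu)$ is compatible with the concatenation map $\mathbb{A}_\la\T\mathbb{A}_\mu\to\mathbb{A}_{\la+\mu}$ of spectral variables, so $\phi$ is a morphism of $\mathbb{A}_{\la+\mu}$-modules and $U$ is an $\mathbb{A}_{\la+\mu}$-submodule.

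Finally I would compare the two modules after localizing at a \emph{generic} closed point of $\mathrm{Spec}\,\mathbb{A}_{\la+\mu}$, i.e.\ a choice of pairwise distinct spectral parameters. At such a point $\mathbb{W}(\la+\mu)$ specializes to the tensor product of evaluation modules $V(\om_k)$ taken at these distinct points, and the corresponding specialization of $\mathbb{W}(\la)\T\mathbb{W}(\mu)$ is the analogous tensor product with the parameters of the two factors disjoint; there the cyclic module generated by $v_\la\T v_\mu$ is visibly all of it. Matching the graded dimensions of these generic fibres gives $\ch U\ge\ch\mathbb{W}(\la+\mu)$, so $\phi$ is an isomorphism. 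I expect the genuine obstacle to be this last lower bound: one must guarantee that no degeneration occurs at generic parameters, which is precisely the content of the freeness (flatness) of the global Weyl modules over their spectral algebras and of the fact that generic local Weyl modules split as honest tensor products of evaluation modules.
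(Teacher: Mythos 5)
The paper offers no proof of this lemma to compare against: it is imported verbatim from \cite{Kat}. Judged on its own, your argument is a sound reconstruction of the standard proof. The first half is complete: the coproduct computation for \eqref{weylvanishing1} and the binomial expansion of $(f_{-\al}\T 1)^{a+b+1}$ for \eqref{weylbound1} correctly produce the surjection $\phi\colon\mathbb{W}(\la+\mu)\twoheadrightarrow U$. In the second half you correctly identify the needed inputs (freeness of $\mathbb{W}(\nu)$ over its spectral algebra $\mathbb{A}_\nu$, and the decomposition of closed fibres at pairwise distinct parameters into tensor products of evaluation modules, which is the content of Proposition \ref{fusionproduct} combined with $W(\om_k)\simeq V(\om_k)$ in type $A$), but two points need tightening. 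First, the natural map of spectral algebras goes $\mathbb{A}_{\la+\mu}\to\mathbb{A}_\la\T\mathbb{A}_\mu$ (totally symmetric functions inside partially symmetric ones, a finite free extension), not the other way; accordingly the fibre of $\mathbb{W}(\la)\T\mathbb{W}(\mu)$ over a closed point $\zeta$ of ${\rm Spec}\,\mathbb{A}_{\la+\mu}$ with distinct coordinates is a direct sum over the several points lying above $\zeta$, and the submodule generated by the image of $v_\la\T v_\mu$ is only a diagonal copy of $\bigotimes_i V(\om_{k_i})_{\zeta_i}$ --- which still has the required dimension $d=\dim W(\la+\mu)$, so the bound $\dim U\T\kappa(\zeta)\ge d$ survives. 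Second, ``matching the graded dimensions of the generic fibres'' does not by itself yield $\ch U\ge\ch\mathbb{W}(\la+\mu)$: a dimension count at one closed point controls no grading. The clean way to close is: if $\ker\phi\neq 0$, then, being a nonzero submodule of the free module $\mathbb{W}(\la+\mu)\cong\mathbb{A}_{\la+\mu}^{\oplus d}$, it is torsion-free of positive rank, so ${\rm rank}_{\mathbb{A}_{\la+\mu}}U<d$ and hence $\dim U\T\kappa(\zeta)<d$ on a dense open set of closed points; that set meets the dense open locus of pairwise distinct parameters, contradicting the lower bound established there. Hence $\ker\phi=0$ and $\phi$ is an isomorphism. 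With these repairs your proof is complete; the obstacle you flagged --- freeness of the global Weyl module over its spectral algebra --- is indeed the one genuinely nontrivial external input.
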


\begin{cor}\label{weylmodulering}
There exists a surjection of dual Weyl modules:
\begin{equation}\label{dualweylproduct}
\mathbb{W}^*(\la)\otimes\mathbb{W}^*(\mu)\twoheadrightarrow\mathbb{W}^*(\la+ \mu),
\end{equation}
inducing the structure of associative and commutative algebra on the space $\bigoplus_{\lambda \in X_+}\mathbb{W}^*(\lambda)$.
We denote this algebra by $\mathbb{W}=\mathbb{W}(\fg)$.
\end{cor}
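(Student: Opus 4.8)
The plan is to dualize the injection provided by Lemma \ref{weylinjection} and to check that the resulting maps make $\mathbb{W}$ into a commutative associative algebra, in exact parallel with the finite-dimensional construction of $\mathbb{V}$ recalled above. First I would upgrade Lemma \ref{weylinjection} to a statement about bigraded modules: the tensor product $\mathbb{W}(\la)\otimes\mathbb{W}(\mu)$ carries the tensor product of the weight and $t$-degree gradings, the vector $v_\la\otimes v_\mu$ is homogeneous of weight $\la+\mu$ and $t$-degree $0$, and so the $\fg\otimes\mathbb{K}[t]$-submodule it generates is bigraded. By Lemma \ref{weylinjection} this submodule is isomorphic to $\mathbb{W}(\la+\mu)$ through an isomorphism sending $v_{\la+\mu}$ to $v_\la\otimes v_\mu$; identifying $\mathbb{W}(\la+\mu)$ with this submodule yields a bigraded embedding
\[
\mathbb{W}(\la+\mu)\hookrightarrow\mathbb{W}(\la)\otimes\mathbb{W}(\mu),\qquad v_{\la+\mu}\mapsto v_\la\otimes v_\mu.
\]

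Next I would pass to restricted duals. For fixed $t$-degree $k$ the homogeneous component of $\mathbb{W}(\la)\otimes\mathbb{W}(\mu)$ equals $\bigoplus_{i+j=k}\mathbb{W}(\la)^{(i)}\otimes\mathbb{W}(\mu)^{(j)}$, a finite direct sum of finite-dimensional spaces and hence finite-dimensional; therefore the restricted dual of $\mathbb{W}(\la)\otimes\mathbb{W}(\mu)$ is canonically identified with $\mathbb{W}^*(\la)\otimes\mathbb{W}^*(\mu)$. Dualizing the embedding above component by component turns an injection of finite-dimensional spaces into a surjection, so I obtain the desired
\[
\mathbb{W}^*(\la)\otimes\mathbb{W}^*(\mu)\twoheadrightarrow\mathbb{W}^*(\la+\mu),
\]
which I take as the multiplication $m_{\la,\mu}$ on $\mathbb{W}=\bigoplus_\la\mathbb{W}^*(\la)$.

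It remains to verify commutativity and associativity, which I would do by tracking cyclic vectors, and this is the only genuinely structural (though mild) point. Commutativity follows because the flip $\mathbb{W}(\la)\otimes\mathbb{W}(\mu)\to\mathbb{W}(\mu)\otimes\mathbb{W}(\la)$ exchanges $v_\la\otimes v_\mu$ and $v_\mu\otimes v_\la$ and therefore matches the two embedded copies of $\mathbb{W}(\la+\mu)$; dualizing gives $m_{\la,\mu}=m_{\mu,\la}\circ(\mathrm{flip})$. For associativity I would apply Lemma \ref{weylinjection} twice to embed $\mathbb{W}(\la+\mu+\nu)$ into $\mathbb{W}(\la)\otimes\mathbb{W}(\mu)\otimes\mathbb{W}(\nu)$; both bracketings send the generator to $v_\la\otimes v_\mu\otimes v_\nu$ and hence produce the same cyclic submodule, so the two embeddings coincide, and dualizing this single triple embedding shows that $m_{\la+\mu,\nu}\circ(m_{\la,\mu}\otimes\mathrm{id})$ and $m_{\la,\mu+\nu}\circ(\mathrm{id}\otimes m_{\mu,\nu})$ agree. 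I expect the only care to be needed precisely in fixing these normalizations so that the triple embedding is unambiguous; once the cyclic vectors are pinned down as above (consistently with the cocyclicity of Lemma \ref{cocyclic}) there is no scalar freedom, and associativity holds on the nose.
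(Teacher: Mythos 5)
Your proposal is correct and follows exactly the route the paper intends: the paper states this corollary without proof as an immediate consequence of Lemma \ref{weylinjection}, and your argument (dualizing the graded embedding componentwise to get a surjection of restricted duals, then checking commutativity and associativity by tracking the cyclic vectors $v_\la\otimes v_\mu$) simply fills in the routine details the authors omit. Nothing further is needed.
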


\begin{rem}
The algebra $\mathbb{W}$ is an analogue of the algebra $\mathbb{V}=\bigoplus_{\lambda \in X_+}{V}(\lambda)^*$.
The algebra $\mathbb{V}$ is generated by the space $\bigoplus_{k=1}^{\rm rk (\fg)}V({\omega_k})^*$ and there are only
quadratic relations (see \cite{F,Kum}).
\end{rem}

The following proposition is a direct consequence of Corollary \ref{weylmodulering}.

\begin{prop}
$\mathbb{W}$ is generated by the space $\bigoplus_{k=1}^{{\rm rk}(\mathfrak{g})}\mathbb{W}(\omega_k)^*$.
\end{prop}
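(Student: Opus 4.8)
The plan is to deduce the statement directly from Corollary~\ref{weylmodulering}, which already establishes that $\mathbb{W}=\bigoplus_{\lambda\in X_+}\mathbb{W}(\lambda)^*$ is an associative commutative algebra whose multiplication comes from the surjections $\mathbb{W}^*(\lambda)\otimes\mathbb{W}^*(\mu)\twoheadrightarrow\mathbb{W}^*(\lambda+\mu)$. The key observation is that every dominant weight $\lambda\in X_+$ decomposes as $\lambda=\sum_{k=1}^{{\rm rk}(\fg)} m_k\omega_k$ with each $m_k\ge 0$, so $\lambda$ is a nonnegative integer combination of the fundamental weights. Thus I would first fix such a $\lambda$ and write it as an ordered sum of fundamental weights (with multiplicities), $\lambda=\omega_{k_1}+\dots+\omega_{k_r}$.

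Next I would iterate the surjection \eqref{dualweylproduct}. Starting from the generating subspaces $\mathbb{W}(\omega_{k_i})^*\subset\mathbb{W}$, repeated application of the algebra multiplication produces a surjection
\[
\mathbb{W}(\omega_{k_1})^*\otimes\mathbb{W}(\omega_{k_2})^*\otimes\dots\otimes\mathbb{W}(\omega_{k_r})^*\twoheadrightarrow\mathbb{W}(\lambda)^*.
\]
This is exactly associativity of the product combined with $r-1$ instances of Corollary~\ref{weylmodulering}, and it shows that the degree-$\lambda$ component $\mathbb{W}(\lambda)^*$ lies in the image of the product of the fundamental pieces $\bigoplus_{k=1}^{{\rm rk}(\fg)}\mathbb{W}(\omega_k)^*$. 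Since this holds for every $\lambda\in X_+$, the subalgebra generated by $\bigoplus_{k=1}^{{\rm rk}(\fg)}\mathbb{W}(\omega_k)^*$ contains every graded component and hence equals all of $\mathbb{W}$.

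Strictly speaking there is no hard step here: the proposition is a formal consequence of the existence of the algebra structure and the surjectivity in \eqref{dualweylproduct}. The only point requiring a word of care is that the empty product (the case $\lambda=0$) should be interpreted as the unit of the algebra, i.e.\ $\mathbb{W}(0)^*\simeq V(0)^*\simeq\mathbb{K}$, which is automatically contained in any subalgebra. I would therefore present the argument concisely as an induction on the height $\sum_k m_k$ of $\lambda$: the base case $\lambda=0$ is the unit, and the inductive step removes one fundamental weight $\omega_k$ and applies \eqref{dualweylproduct} to $\mathbb{W}(\omega_k)^*\otimes\mathbb{W}(\lambda-\omega_k)^*\twoheadrightarrow\mathbb{W}(\lambda)^*$, using the inductive hypothesis that $\mathbb{W}(\lambda-\omega_k)^*$ is a product of fundamental generators.
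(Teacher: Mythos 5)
Your argument is correct and is exactly the one the paper intends: the paper simply states that the proposition is a direct consequence of Corollary~\ref{weylmodulering}, and your write-up spells out that consequence (decompose $\lambda$ into fundamental weights and iterate the surjection \eqref{dualweylproduct}). No difference in approach, only in the level of detail.
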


Let us describe the structure of fundamental Weyl modules $\mathbb{W}(\omega_k)$.

\begin{lem}\label{fundamentalstructure}
Assume that $W(\omega_k)|_{\fg \otimes 1}\simeq V(\omega_k)$. Then for the global Weyl module we have:
\[\mathbb{W}(\omega_k)\simeq V(\omega_k) \otimes \mathbb{K}[t]\]
with the action of the current algebra given by the following rule:
\[x \otimes t^l.u \otimes t^k=x.u \otimes t^{l+k}.\]
\end{lem}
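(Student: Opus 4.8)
The plan is to realize the claimed isomorphism as an explicit surjection and then to upgrade it to an isomorphism by a graded character count. First I would record that the formula $x\otimes t^l.(u\otimes t^m)=(x.u)\otimes t^{l+m}$ does define a $\fg\otimes\mathbb{K}[t]$-module structure on $V(\omega_k)\otimes\mathbb{K}[t]$: it is simply the action obtained by letting $\fg\otimes\mathbb{K}[t]$ act through the $V(\omega_k)$-factor while multiplying $t$-powers, and the module axioms are immediate from those of $V(\omega_k)$. I would then check that $v_{\omega_k}\otimes 1$, where $v_{\omega_k}$ is the highest weight vector of $V(\omega_k)$, satisfies the defining relations of $\mathbb{W}(\omega_k)$: relation \eqref{weylvanishing1} holds because $e_{\alpha}v_{\omega_k}=0$ in $V(\omega_k)$, and relation \eqref{weylbound1} holds because $(f_{-\alpha})^{\langle\alpha^\vee,\omega_k\rangle+1}v_{\omega_k}=0$ already in $V(\omega_k)$. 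Since $v_{\omega_k}\otimes 1$ generates the target (applying $\fg\otimes 1$ gives all of $V(\omega_k)\otimes 1$ by irreducibility, and then a single $x\otimes t^m$ reaches every $u\otimes t^m$ because $\fg\cdot V(\omega_k)=V(\omega_k)$), the universal property of the global Weyl module yields a surjection $\phi\colon\mathbb{W}(\omega_k)\twoheadrightarrow V(\omega_k)\otimes\mathbb{K}[t]$ of graded $\fg\otimes\mathbb{K}[t]$-modules.

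Next I would compute both characters. On the target, $V(\omega_k)$ sits entirely in $t$-degree $0$ and tensoring with $\mathbb{K}[t]$ multiplies by $\sum_{m\ge 0}q^m=(1-q)^{-1}$, so $\ch\bigl(V(\omega_k)\otimes\mathbb{K}[t]\bigr)=\ch V(\omega_k)/(1-q)$. On the source I would invoke Proposition \ref{localglobal}. Since $\omega_k$ has a single nonzero coordinate, equal to $1$, the definition of $(q)_\lambda$ gives $(q)_{\omega_k}=1-q$, whence $\ch\mathbb{W}(\omega_k)=\ch W(\omega_k)/(1-q)$. It then remains to identify $\ch W(\omega_k)$ with $\ch V(\omega_k)$, and this is exactly where the hypothesis $W(\omega_k)|_{\fg\otimes 1}\simeq V(\omega_k)$ is used: as $\fg\otimes 1$ acts by operators of $t$-degree zero, each $t$-graded component of $W(\omega_k)$ is a $\fg$-submodule, and irreducibility of the restriction forces the whole module to be concentrated in the single $t$-degree carrying the cyclic generator $v_{\omega_k}$, namely degree $0$. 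Thus $\ch W(\omega_k)=\ch V(\omega_k)$ and the two characters match.

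Finally, because all homogeneous components are finite dimensional and the characters of source and target agree, the graded surjection $\phi$ has equal graded dimensions on every bigraded piece and is therefore an isomorphism, establishing the stated description of $\mathbb{W}(\omega_k)$. I expect the only genuinely delicate point to be the degree-concentration step: one must combine the irreducibility of $W(\omega_k)|_{\fg\otimes 1}$ with the fact that $\fg\otimes 1$ preserves the $t$-grading to rule out any positive $t$-degree, after which the remaining arguments are either formal module-theoretic verifications or the direct character computation above.
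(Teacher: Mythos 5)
Your proposal is correct and follows essentially the same route as the paper: exhibit $V(\omega_k)\otimes\mathbb{K}[t]$ as a quotient of $\mathbb{W}(\omega_k)$ by checking the defining relations on $v_{\omega_k}\otimes 1$, then match characters via Proposition \ref{localglobal} using $(q)_{\omega_k}=1-q$ and the hypothesis $W(\omega_k)|_{\fg\otimes 1}\simeq V(\omega_k)$. You merely spell out the steps the paper leaves implicit (well-definedness of the action, generation by $v_{\omega_k}\otimes 1$, and the degree-concentration argument giving $\ch W(\omega_k)=\ch V(\omega_k)$), all of which are sound.
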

\begin{proof}
We note that all defining relations for $\mathbb{W}(\omega_k)$ hold on the constructed module. Therefore we have a surjection
$\mathbb{W}(\omega_k)\twoheadrightarrow V(\omega_k) \otimes \mathbb{K}[t].$
Now it suffices to note that
\[\ch \left( V(\omega_k) \otimes \mathbb{K}[t]\right) = \frac{\ch W(\omega_k)}{(q)_{\omega_k}}= \frac{\ch W(\omega_k)}{1-q}.\]
\end{proof}

\begin{lem}\label{dualfundamentalstructure}
Under assumptions of Lemma \ref{fundamentalstructure} we have:
\[\mathbb{W}(\omega_k)^*\simeq V(\omega_k)^* \otimes \mathbb{K}[t]\]
with the action of the current algebra given by the following rule:
\begin{equation}
x \otimes t^l.u \otimes t^k=
                            \begin{cases}
                             x.u \otimes t^{k-l}, & \text{if}~ k \geq l;\\
                             0, & \text{otherwise.} \\
                            \end{cases}
\end{equation}
\end{lem}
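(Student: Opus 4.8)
The plan is to dualize Lemma \ref{fundamentalstructure} directly. By that lemma we have $\mathbb{W}(\om_k)\simeq V(\om_k)\T\mathbb{K}[t]$ as a graded $\fg\T\mathbb{K}[t]$-module, whose $m$-th homogeneous component is $V(\om_k)\T t^m$ and whose action is $(x\T t^l).(u\T t^m)=x.u\T t^{l+m}$. Taking the restricted graded dual termwise, I would identify $(V(\om_k)\T t^m)^*$ with $V(\om_k)^*\T t^m$; this produces the vector-space isomorphism $\mathbb{W}(\om_k)^*\simeq V(\om_k)^*\T\mathbb{K}[t]$ and fixes the $t$-grading on the right-hand side, placing the functionals dual to the degree-$m$ piece in degree $m$.

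Next I would compute the contragredient action explicitly on this basis. For a functional $\phi$ the dual action is $(y.\phi)(v)=-\phi(y.v)$, so for $y=x\T t^l$, a basis functional $u\T t^m$ with $u\in V(\om_k)^*$, and a test vector $w\T t^p$ one gets
\[
\bra (x\T t^l).(u\T t^m),\, w\T t^p\ket = -\bra u\T t^m,\, x.w\T t^{l+p}\ket .
\]
This pairing is nonzero only when $m=l+p$, i.e. $p=m-l$, and it then equals $-\bra u, x.w\ket=\bra x.u, w\ket$, where $x.u$ now denotes the contragredient $\fg$-action on $V(\om_k)^*$. Hence $(x\T t^l).(u\T t^m)=x.u\T t^{m-l}$ whenever $m\ge l$, while it vanishes when $m<l$, since then no admissible degree $p\ge 0$ exists. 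This is precisely the claimed formula, and matching it with the action on $V(\om_k)^*\T\mathbb{K}[t]$ gives the module isomorphism.

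Two points require care, and they constitute the only nonroutine part of the argument. First, one must check that $x\T t^l$ genuinely preserves the restricted dual: it sends the degree-$m$ component to degree $m-l$ and every homogeneous piece is finite dimensional, so the image of any functional still has finite support and the action is well defined and graded of degree $-l$. Second, the main subtlety is the truncation producing the value $0$ in the second case: this is forced by the absence of negative powers of $t$ in the restricted dual, and it is exactly the step where the asymmetry between a module and its restricted dual enters. Everything else is a routine bookkeeping of signs in the contragredient action, consistent with the duality used in Corollary \ref{weylmodulering}.
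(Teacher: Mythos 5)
Your proof is correct. The paper states this lemma without any written proof, treating it as an immediate consequence of Lemma \ref{fundamentalstructure}; your explicit dualization --- identifying the restricted graded dual componentwise with $V(\omega_k)^*\otimes t^m$, computing the contragredient action, and noting that the truncation to zero for $k<l$ is forced by the absence of negative $t$-degrees --- is precisely the intended argument, so you take the same (implicit) route as the paper.
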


\begin{rem}\label{goodfundamental}
Note that the conditions of Lemma \ref{fundamentalstructure} hold for all fundamental weights in types $A$ and $C$,
vector and spinor
representations in types $B$ and $D$, two $27$-dimensional representations in type $E_6$, $56$-dimensional representation in type $E_7$, $26$-dimensional representation in type $F_4$ and $7$-dimensional representation in type $G_2$.
\end{rem}

From now on we assume that $W(\omega_k)|_{\fg \otimes 1}\simeq V(\omega_k)$ for all fundamental weights.
Let $\{X_{i,1},\dots, X_{k,l_k}\}$ be a basis of $V(\omega_k)^*$. Then $\{X_{kj}^{(l)}=X_{kj}\otimes t^l\}$ is a basis of
$\mathbb{W}(\omega_k)^*$.
We consider the formal series $X_{kj}(s)=\sum_{l=0}^\infty X_{kj}^{(l)} s^l$.

\begin{prop}\label{classicaltosemiinfrelations}
Assume that a relation
$$r:=\sum_{\substack{j_1=1,\dots,l_{k_1}\\j_2=1,\dots,l_{k_1}}} c_{j_1,j_2} X_{k_1,j_1}X_{k_2,j_2}=0$$
holds in $\mathbb{V}$ for some constants $c_{j_1,j_2}\in\mathbb{K}$. Then

$$r(s):=\sum_{\substack{j_1=1,\dots,l_{k_1}\\j_2=1,\dots,l_{k_1}}} c_{j_1,j_2} X_{k_1,j_1}(s)X_{k_2,j_2}(s)=0$$
in the algebra $\mathbb{W}[[s]]$,
i. e. all the coefficients of this series form relations in $\mathbb{W}$.
\end{prop}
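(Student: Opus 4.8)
The plan is to lift the whole identity to the tensor product $\mathbb{W}(\omega_{k_1})^*\otimes\mathbb{W}(\omega_{k_2})^*$, exploit the equivariance of the Pl\"ucker multiplication, and reduce the vanishing of $r(s)$ to the vanishing of $r$ in $\mathbb{V}$ by pairing against the highest weight vectors. The computational heart is the following intertwining identity, obtained directly from the explicit module structure of Lemma \ref{dualfundamentalstructure}: since $(x\otimes t^l).(X\otimes t^m)=(x.X)\otimes t^{m-l}$ for $m\ge l$ and vanishes otherwise, a reindexing gives
\[
(x\otimes t^l).X_{kj}(s)=s^l\,(x.X_{kj})(s),
\]
where on the right $x.X_{kj}$ is the $\fg$-action on $V(\omega_k)^*$. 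Equivalently, writing $\beta_k\colon V(\omega_k)^*\to\mathbb{W}(\omega_k)^*[[s]]$, $X\mapsto X(s)$, the generating-function assignment transports the $\fg$-action on $V(\omega_k)^*$ to the $\fg\otimes\mathbb{K}[t]$-action, with the $t$-grading recorded by powers of $s$.

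Next I reduce by duality. By Corollary \ref{weylmodulering} the multiplication $\mathbb{W}(\omega_{k_1})^*\otimes\mathbb{W}(\omega_{k_2})^*\twoheadrightarrow\mathbb{W}(\mu)^*$, with $\mu=\omega_{k_1}+\omega_{k_2}$, is the restricted dual of the embedding $\mathbb{W}(\mu)\hookrightarrow\mathbb{W}(\omega_{k_1})\otimes\mathbb{W}(\omega_{k_2})$ of Lemma \ref{weylinjection} sending $v_\mu\mapsto v_{\omega_{k_1}}\otimes v_{\omega_{k_2}}$, and it is $\fg\otimes\mathbb{K}[t]$-equivariant. Hence $r(s)$, the image of $\tilde r(s):=\sum c_{j_1,j_2}X_{k_1,j_1}(s)\otimes X_{k_2,j_2}(s)$, vanishes if and only if $\tilde r(s)$ annihilates the submodule $\mathbb{W}(\mu)$. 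As $\mathbb{W}(\mu)$ is cyclic on $v_\mu$, this holds if and only if
\[
\langle g\cdot\tilde r(s),\,v_{\omega_{k_1}}\otimes v_{\omega_{k_2}}\rangle=0\quad\text{for all }g\in U(\fg\otimes\mathbb{K}[t]),
\]
the pairing being extended $\mathbb{K}[[s]]$-linearly.

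Finally I collapse this to the finite-dimensional relation. Applying the intertwining identity factor by factor, and using that each current acts as a derivation on products and diagonally on tensors while $s$ stays central, the action of a monomial $g$ on $\tilde r(s)=(\beta_{k_1}\otimes\beta_{k_2})(r^\flat)$ becomes $(\beta_{k_1}\otimes\beta_{k_2})(\bar g_s\cdot r^\flat)$, where $r^\flat=\sum c_{j_1,j_2}X_{k_1,j_1}\otimes X_{k_2,j_2}\in V(\omega_{k_1})^*\otimes V(\omega_{k_2})^*$ and $\bar g_s\in U(\fg)[[s]]$ is obtained from $g$ by the substitution $x\otimes t^l\mapsto s^l x$ with diagonal $\fg$-action. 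Pairing $\beta_k(X)$ against the generator $v_{\omega_k}$ extracts the $t$-degree-$0$ term $X_{kj}^{(0)}=X_{kj}$ and returns the finite-dimensional pairing, so the displayed expression equals $\langle\bar g_s\cdot r^\flat,\,v_{\omega_{k_1}}\otimes v_{\omega_{k_2}}\rangle$ evaluated inside $V(\omega_{k_1})\otimes V(\omega_{k_2})$. Transporting $\bar g_s$ across the pairing by the principal anti-automorphism of $U(\fg)$, this is a $\mathbb{K}[[s]]$-combination of numbers $\langle r^\flat,\,a\cdot(v_{\omega_{k_1}}\otimes v_{\omega_{k_2}})\rangle$ with $a\in U(\fg)$; but $U(\fg)\cdot(v_{\omega_{k_1}}\otimes v_{\omega_{k_2}})$ is exactly the Cartan component $V(\mu)\hookrightarrow V(\omega_{k_1})\otimes V(\omega_{k_2})$, and the hypothesis $r=0$ in $\mathbb{V}$ says precisely that $r^\flat$ annihilates this component. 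Every term therefore vanishes, giving $r(s)=0$.

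The main obstacle is the first step: isolating the correct intertwining identity for the generating functions and checking that it is compatible with the coproduct, so that the action of an arbitrary $g\in U(\fg\otimes\mathbb{K}[t])$ on $\tilde r(s)$ reduces, $s$-coefficient by $s$-coefficient, to the diagonal $\fg$-action on $r^\flat$. Once this equivariance-up-to-$s$-grading is established, the passage to the Cartan component and the use of the finite-dimensional relation are formal.
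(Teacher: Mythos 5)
Your proposal is correct and follows essentially the same route as the paper: both arguments rest on the cyclicity of $\mathbb{W}(\omega_{k_1}+\omega_{k_2})$ on its $t$-degree-zero generator (equivalently, cocyclicity of the dual), together with the observation that acting by $x\otimes t^l$ on a generating function $X(s)$ just multiplies by $s^l$ and acts by $x\otimes 1$, so that everything collapses to the $t$-degree-zero component where the classical relation $r=0$ applies. The paper phrases this more compactly — the coefficients of all relation-series span a $\fg\otimes\mathbb{K}[t]$-submodule of $\mathbb{W}(\omega_{k_1}+\omega_{k_2})^*$ meeting the cogenerating degree-zero piece trivially, hence zero — while you make the same mechanism explicit via the evaluation homomorphism $x\otimes t^l\mapsto s^l x$ and the Cartan component of $V(\omega_{k_1})\otimes V(\omega_{k_2})$.
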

\begin{proof}
We know that for any relation $r$ and any element $e \in \fg$ the element $e.r=r_1$ is a relation. We have $e \otimes 1.r(s)=r_1(s)$.
Let $r(s)=\sum_{l=0}^\infty r^{(l)} s^l$. If
$h_\alpha.r=ar$, $a \in \mathbb{K}$, then $h_{\alpha}\otimes t^l.r^{(k)}=ar^{(k-l)}$. Thus all coefficients of series of the form $r(s)$
form a submodule in $\mathbb{W}(\omega_{k_1}+\omega_{k_2})^*$. Note that this submodule has zero intersection with the zero level submodule
${V(\omega_{k_1}+\omega_{k_2})}^*$. However $\mathbb{W}^*(\omega_{k_1}+\omega_{k_2})$ is cocyclic. Therefore this submodule is zero.
\end{proof}

However in general these are not only relations in the algebra $\mathbb{W}$.

\subsection{Semi-infinite flag varieties.}
Let $G[[t]]$ be the group of the Lie algebra $\fg \otimes \mathbb{K}[[t]]$, $B[[t]]\supset U[[t]]$ be the groups of
$\fb \otimes \mathbb{K}[[t]]\supset \fn \otimes \mathbb{K}[[t]]$ respectively. In this paper we deal with
$\fg=\msl_n$. In this case $G[[t]]=SL(\mathbb{K}[[t]])$,
$B[[t]]$ and $U[[t]]$ are upper triangular and unitriangular matrices over this ring. We consider the varieties
$G[[t]]/ U[[t]]$ and $G[[t]]/H \cdot U[[t]]$. They have the following realization due to Drinfeld \cite{FiMi,BG}.

For any $\lambda \in X_+$ consider the space $V({\lambda})[[t]]=V({\lambda})\otimes \mathbb{K}[[t]]$. Then we have the family of embeddings
$m_{\lambda,\mu}: V({\lambda+\mu})[[t]]\hookrightarrow V({\lambda})[[t]]\otimes V({\mu})[[t]] $. A Drinfeld-Pl\"ucker data is a set of
lines $l_{\lambda} \in V(\lambda)[[t]]$ such that $m_{\lambda,\mu}l_{\lambda+\mu}=l_{\lambda}\otimes l_{\mu}$. Such a collection
of lines is fixed
by the lines  $l_{\omega_k} \in V(\omega_k)[[t]]$ for all fundamental weights $\omega_k$. Take a basis $\{v_{kj}\}$ of
$V(\omega_k)$.
Put $l_{\omega_k}=\mathbb{K}\sum_j a_{kj}v_{kj}$, where $a_{kj}\in \mathbb{K}[[t]]$.
Then the family of lines $\{l_{\omega_k}\}$ gives a Drinfeld-Pl\"ucker
data iff the series $(a_{kj})$ satisfy the Pl\"ucker relations (thus this realization of $\overline{G[[t]]/ U[[t]]}$
is the projectivized arc scheme of $\overline{G/U}$, see e.g. \cite{Mu}).
The set of points satisfying these relations contains $G[[t]]/H \cdot U[[t]]$
as an open dense subset. Indeed, take a product of the highest weight lines
$\mathbb{K} v_{\omega_1}\otimes \dots \otimes \mathbb{K} v_{\omega_{n-1}}$ in
$V(\omega_1)[[t]]\otimes \dots \otimes V(\omega_{n-1})[[t]]$. Then the set of Drinfeld-Pl\"ucker data is the closure of the
$G[[t]]$-orbit of this element and its stabilizer is $H \cdot U[[t]]$.

In type $A$ this construction can be written down in a very explicit way. We denote the coordinates in $V(\omega_k)[[t]]$ by
$X_I^{(l)}$, where $I=(1\le i_1<\dots<i_k<n)$ and $l\in\bZ_{\ge 0}$
(recall that $V(\omega_k)$ is the $k$-th wedge power of the vector representation; the coordinate
$X_I^{(l)}$ is dual to $v_{i_1}\wedge\dots\wedge v_{i_k}\otimes t^l$).
An element $g\in SL_n[[t]]$ is given
by $n\times n$ matrix whose entries are Taylor series in $t$. Therefore a coordinate $X_I^{(l)}$ of a point
$g\cdot \mathbb{K} v_{\omega_1}\otimes \dots \otimes \mathbb{K} v_{\omega_{n-1}}$ is equal to
the coefficient in front of $t^l$ of the minor of the matrix $g$ located on the intersection of the first $k$
rows and $i_1,\dots,i_k$ columns.

Finally, let us note that the Pl\"ucker relations $m_{\lambda,\mu}l_{\lambda+\mu} =l_{\lambda}\otimes l_{\mu}$
give the non-reduced scheme structure on the $G[[t]]$-orbit closure for $n\ge 5$.
Corollary 4.27 from \cite{KNS}
says that $\overline{G[[t]]/ U[[t]]}\simeq {\rm Spec}\mathbb{W}$ and $\overline{G[[t]]/H \cdot U[[t]]}$ is the spectrum of multi-homogenous
ideals of ${\mathbb W}$. In Section \ref{Pluecker} we describe explicitly the difference between the reduced scheme structure
provided by ${\mathbb W}$ and the non-reduced scheme structure defined by $\mathbb{K}[t]$ Pl\"ucker relations. We close this section with
an example.

\begin{example}
Let $G=SL_5$. Then one can check that the coordinates of a point in the open $G[[t]]$-orbit satisfy the relation
\begin{multline}\label{example}
X_{12}^{(1)}X_{345}^{(0)}-X_{13}^{(1)}X_{245}^{(0)}+X_{14}^{(1)}X_{235}^{(0)} - X_{15}^{(1)}X_{234}^{(0)}+ X_{23}^{(1)}X_{145}^{(0)}\\
- X_{24}^{(1)}X_{135}^{(0)} + X_{25}^{(1)}X_{134}^{(0)} + X_{34}^{(1)}X_{125}^{(0)} - X_{35}^{(1)}X_{124}^{(0)} + X_{45}^{(1)}X_{123}^{(0)}.
\end{multline}
We note that \eqref{example} does not belong to the ideal generated by the coefficients of $\mathbb{K}[t]$ Pl\"ucker relations, because
(in particular) in each Pl\"ucker relation the term
$X_{12}^{(1)}X_{345}^{(0)}$ shows up together with its companion $X_{12}^{(0)}X_{345}^{(1)}$. For example,
the sum of \eqref{example} with
\begin{multline*}
X_{12}^{(0)}X_{345}^{(1)}-X_{13}^{(0)}X_{245}^{(1)}+X_{14}^{(0)}X_{235}^{(1)}- X_{15}^{(0)}X_{234}^{(1)}+ X_{23}^{(0)}X_{145}^{(1)}\\
- X_{24}^{(0)}X_{135}^{(1)} + X_{25}^{(0)}X_{134}^{(1)} + X_{34}^{(0)}X_{125}^{(1)} - X_{35}^{(0)}X_{124}^{(1)} + X_{45}^{(0)}X_{123}^{(1)}.
\end{multline*}
does belong to the ideal of Pl\"ucker relations.
\end{example}

\section{Semi-infinite Pl\"ucker relations}\label{Pluecker}
\subsection{Algebra of minors.}

Consider the set of variables $z_{ij}^{(l)}$, $1 \leq i,j \leq n$, $l \geq 0$.
Let $e_{pq}$ be the $(p,q)$-th matrix unit, $h_{pq}=e_{pp}-e_{qq}$.
We define a derivation action of $\msl_n \otimes \mathbb{K}[t]$ on the polynomials in these variables by the following action of generators:
\[e_{qp}\otimes t^l z_{ij}^{(k)}=
                            \begin{cases}
                            -z_{ip}^{(k-l)},& \text{if}~ q=j,~ k \geq l;\\
                             0, &\text{otherwise.} \\
                            \end{cases},\ p\ne q\]
\[h_{qp}\otimes t^l z_{ij}^{(k)}=
                            \begin{cases}
                            -z_{ij}^{(k-l)}, & \text{if }  p=j,~ k \geq l;\\
                            z_{ij}^{(k-l)},  & \text{if }  q=j,~ k \geq l;\\
                             0, & \text{otherwise.}
                            \end{cases}\]

We define the formal series $z_{ij}(s)=\sum_{k=0}^\infty z_{ij}^{(k)} s^k$.
Let
\begin{equation}
m_{i_1, \dots, i_k}(s)=\det \left(
                                 \begin{array}{cccc}
                                   z_{1,i_1}(s) & z_{1,i_2}(s) & \cdots & z_{1,i_k}(s) \\
                                   z_{2,i_1}(s) & z_{2,i_2}(s) & \cdots & z_{2,i_k}(s) \\
                                   \vdots & \vdots & \ddots & \vdots \\
                                   z_{k,i_1}(s) & z_{k,i_2}(s) & \cdots & z_{k,i_k}(s) \\
                                 \end{array}
                               \right)
\end{equation}
Note that $m_{i_1, \dots, i_k}(s)$ is alternating with respect to permutation of indexes.

Let us write the decomposition $m_{i_1, \dots, i_k}(s)=\sum_{l=0}^{\infty}m_{i_1, \dots, i_k}^{(l)} s^l$.
We consider the subalgebra
$\mathcal{M}=\mathbb{K}[m_{i_1, \dots, i_k}^{(l)}]\subset \mathbb{K}[z_{ij}^{(l)}]$ and define $\deg m_{i_1, \dots, i_k}^{(l)}=\omega_k$.
Then $\mathcal{M}$ is graded by the semigroup of dominant weights:
\[\mathcal{M}=\bigoplus_{\lambda \in X_+}\mathcal{M}(\lambda).\]
Each $\mathcal{M}(\lambda)$ is $\msl_n\otimes \mathbb{K}[t]$-submodule by the degree of $t$,
\[\mathcal{M}(\lambda)=\bigoplus_{l=0}^\infty\mathcal{M}(\lambda)^{(l)}.\]

\begin{prop}\label{fundamentalasminors}
The set of elements
\[\{m_{i_1, \dots, i_k}^{(l)}, ~i_1<i_2<\dots<i_k, ~l=0, 1, \dots\}\]
form a basis of $\msl_n \otimes \mathbb{K}[t]$-module $\mathcal{M}(\omega_k)\simeq \mathbb{W}(\omega_{k})^*$.
\end{prop}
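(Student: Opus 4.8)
The plan is to exhibit an explicit isomorphism of $\msl_n\otimes\mathbb{K}[t]$-modules
\[
\psi\colon \mathbb{W}(\omega_k)^*\;\xrightarrow{\ \sim\ }\;\mathcal{M}(\omega_k),\qquad X_I^{(l)}\longmapsto m_{i_1,\dots,i_k}^{(l)},
\]
where $I=(i_1<\dots<i_k)$, using Lemma \ref{dualfundamentalstructure} to identify $\mathbb{W}(\omega_k)^*$ with $V(\omega_k)^*\otimes\mathbb{K}[t]$ having basis $\{X_I^{(l)}\}$ (this is where the standing assumption $W(\omega_k)|_{\fg\otimes 1}\simeq V(\omega_k)$, valid in type $A$, enters). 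To prove $\psi$ is a well-defined isomorphism I would check three things: that the $m_I^{(l)}$ span $\mathcal{M}(\omega_k)$, that they are linearly independent, and that the displayed correspondence intertwines the current-algebra action.

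Spanning and independence are elementary. Since $\omega_k$ cannot be written as a nontrivial sum $\omega_a+\omega_b$ of fundamental weights, the degree-$\omega_k$ component of the algebra $\mathcal{M}$ is exactly the linear span of the single generators $m_{i_1,\dots,i_k}^{(l)}$, $l\ge 0$; no products of generators contribute. For independence, expand $m_I(s)=\sum_{\sigma\in S_k}\mathrm{sgn}(\sigma)\prod_{a} z_{a,i_{\sigma(a)}}(s)$ and observe that every monomial occurring in $m_I^{(l)}$ uses the column set $\{i_1,\dots,i_k\}$ (each index once) and has total $t$-degree precisely $l$. Hence monomials coming from distinct pairs $(I,l)$ are distinct, and the $m_I^{(l)}$ are linearly independent. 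Together with the basis $\{X_I^{(l)}\}$ of $\mathbb{W}(\omega_k)^*$ this makes $\psi$ a linear bijection.

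The substantive step is the intertwining property, which I would verify at the level of generating functions. From the defining derivation action one computes
\[
(e_{qp}\otimes t^l)\,z_{ij}(s)=-\delta_{qj}\,s^l z_{ip}(s),\qquad (h_{qp}\otimes t^l)\,z_{ij}(s)=s^l(\delta_{qj}-\delta_{pj})\,z_{ij}(s).
\]
Applying the Leibniz rule to the determinant $m_I(s)$ then yields the clean formulas
\[
(e_{qp}\otimes t^l)\,m_I(s)=-s^l\,m_{I'}(s),\qquad (h_{qp}\otimes t^l)\,m_I(s)=s^l\bigl(\delta_{q\in I}-\delta_{p\in I}\bigr)\,m_I(s),
\]
where $I'$ is obtained from $I$ by replacing the index $q$ by $p$ (and the first expression vanishes unless $q\in I$ and $p\notin I$). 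Extracting the coefficient of $s^{l'}$ reproduces exactly the degree-lowering action of Lemma \ref{dualfundamentalstructure} on $X_I^{(l')}$: multiplication by $s^l$ shifts the $t$-degree up by $l$ on the minor side, which dualizes to the rule sending $t^{l'}\mapsto t^{l'-l}$ (and to $0$ when $l'<l$). Since every root vector of $\msl_n$ is a matrix unit $e_{pq}$, $p\neq q$, and the Cartan is spanned by the $h_{qp}$, these two formulas verify that $\psi$ commutes with the whole current algebra.

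The only genuinely delicate point is the sign and degree bookkeeping in matching the two actions, together with the observation that at $t$-degree zero the statement specializes to the classical fact that the $k\times k$ minors on the first $k$ rows of the generic matrix $(z_{ij}^{(0)})$ span $V(\omega_k)^*$ (the finite-dimensional picture of Section \ref{generalities}); this anchors the identification $\mathcal{M}(\omega_k)^{(0)}\simeq V(\omega_k)^*$ before passing to the $t$-graded refinement. Once the generating-function formulas above are in hand, no further representation-theoretic input is needed, and $\psi$ is the desired isomorphism.
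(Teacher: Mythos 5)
Your proposal is correct, and its skeleton matches the paper's: both identify $\mathbb{W}(\omega_k)^*$ with $V(\omega_k)^*\otimes\mathbb{K}[t]$ via Lemma \ref{dualfundamentalstructure} and show that $X_I^{(l)}\mapsto m_I^{(l)}$ is an isomorphism of $\msl_n\otimes\mathbb{K}[t]$-modules. The difference lies in how injectivity is obtained. The paper argues representation-theoretically: the map is a surjection of modules, the degree-zero minors $m_I^{(0)}$ already form a basis of $V(\omega_k)^*$, and since $\mathbb{W}(\omega_k)^*$ is cocyclic with cogenerators in $t$-degree zero (Lemma \ref{cocyclic}), any nonzero submodule --- in particular the kernel --- would have to meet the degree-zero part, so the kernel vanishes. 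You instead prove linear independence directly inside $\mathbb{K}[z_{ij}^{(l)}]$: every monomial of $m_I^{(l)}$ records the column set $I$ and the total upper-index sum $l$ (and, within a fixed $m_I^{(l)}$, the monomials indexed by distinct $(\sigma,k_1,\dots,k_k)$ are pairwise distinct, so no cancellation occurs), whence minors with distinct $(I,l)$ have disjoint monomial supports. This is more elementary and self-contained --- it does not invoke cocyclicity at all --- at the cost of being specific to the minor realization; the paper's argument is softer and is the template reused later (e.g.\ in Proposition \ref{frommatrixtoweyl}). You also spell out the intertwining computation that the paper dismisses as ``easy to see,'' and your generating-function formulas for the action of $e_{qp}\otimes t^l$ and $h_{qp}\otimes t^l$ on $m_I(s)$ agree with the defining derivation action, so this step is sound.
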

\begin{proof}
 We have the standard basis of $ V(\omega_k)^*\simeq V(\omega_{n-k})$ consisting of elements $\{X_{i_1, \dots, i_k}\}$ defined by:
\[X_{i_1, \dots, i_k}=v_{i_1}^*\wedge \dots \wedge v_{i_k}^*, i_1 < \dots <i_k.\]
Therefore  Lemma \ref{dualfundamentalstructure} tells us that $\{X_{i_1, \dots, i_k}\otimes t^{l}\}, l=0,1,\dots$ is a basis of
$\mathbb{W}(\omega_{k})^*$.
It is easy to see that the map $\{X_{i_1, \dots, i_k}\otimes t^{l}\}\mapsto \{m_{i_1, \dots, i_k}^{(l)}\}$
defines the surjection of modules. Moreover the set $\{m_{i_1, \dots, i_k}^{(0)}\}$
forms a basis of $V(\omega_k)^*$ and the dual Weyl module is cocyclic. Thus all the elements
$m_{i_1, \dots, i_k}^{(l)}$ are linearly independent.
\end{proof}

Let $X_{i_1, \dots, i_p, i_{p+1}, \dots i_k}=-X_{i_1, \dots, i_{p+1}, i_{p}, \dots i_k}$, $X_{i_1, \dots, i_p, i_{p}, \dots i_k}=0$.
Using this rule we define $X_{I}$ for arbitrary $k$-tuple $I$.

\begin{prop}\label{frommatrixtoweyl}
There exists the surjection of algebras $\mathcal{M}\twoheadrightarrow \mathbb{W}$ sending $m^{(l)}_{i_1,\dots,i_k}$ to
$X^{(l)}_{i_1,\dots,i_k}$ and $\mathcal{M}(\lambda)$ to ${\mathbb{W}(\lambda)}^*$.
\end{prop}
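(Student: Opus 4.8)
The plan is to exhibit the desired map as the composite of two already-established facts: the identification of each fundamental graded piece $\mathcal{M}(\omega_k)$ with $\mathbb{W}(\omega_k)^*$, together with the fact that $\mathbb{W}$ is generated by its fundamental pieces. Concretely, I would first invoke Proposition~\ref{fundamentalasminors}, which already gives an isomorphism of $\msl_n\otimes\mathbb{K}[t]$-modules $\mathcal{M}(\omega_k)\simeq\mathbb{W}(\omega_k)^*$ sending $m^{(l)}_{i_1,\dots,i_k}\mapsto X^{(l)}_{i_1,\dots,i_k}$. Since $\mathcal{M}$ is generated as an algebra by the elements $m^{(l)}_{i_1,\dots,i_k}$ (by construction it is the subalgebra of $\mathbb{K}[z^{(l)}_{ij}]$ generated by the coefficients of the minors), and since $\mathbb{W}$ is generated by $\bigoplus_k \mathbb{W}(\omega_k)^*$ (the Proposition immediately following Corollary~\ref{weylmodulering}), the fundamental-piece isomorphisms extend to at most one algebra homomorphism $\mathcal{M}\to\mathbb{W}$, and if it exists it is automatically surjective. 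So the whole content is to check that the assignment on generators is well defined as an algebra map.

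The way I would verify well-definedness is to present $\mathbb{W}$ and $\mathcal{M}$ as quotients of the same ambient free commutative algebra and compare relations. Let $R$ be the polynomial ring on abstract symbols $X^{(l)}_{i_1,\dots,i_k}$ (i.e.\ the symmetric algebra on $\bigoplus_k \mathbb{W}(\omega_k)^*$). Both algebras receive a surjection from $R$: for $\mathbb{W}$ this is the multiplication-induced surjection coming from Corollary~\ref{weylmodulering}, and for $\mathcal{M}$ it is the evaluation $X^{(l)}_{i_1,\dots,i_k}\mapsto m^{(l)}_{i_1,\dots,i_k}$. To produce the arrow $\mathcal{M}\twoheadrightarrow\mathbb{W}$ it suffices to show that the kernel of $R\twoheadrightarrow\mathcal{M}$ is contained in the kernel of $R\twoheadrightarrow\mathbb{W}$; equivalently, every polynomial relation satisfied by the minor coefficients $m^{(l)}_{i_1,\dots,i_k}$ also holds after substituting the $X$'s in $\mathbb{W}$.

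The main obstacle is precisely this containment of ideals, and it is where I would spend the real effort. Both ideals are $\msl_n\otimes\mathbb{K}[t]$-stable and multigraded by $X_+$, so it is enough to compare the weight pieces $\ker(R\to\mathcal{M})\cap R(\lambda)$ and $\ker(R\to\mathbb{W})\cap R(\lambda)$ for each dominant $\lambda$; since both $\mathcal{M}(\lambda)$ and $\mathbb{W}(\lambda)^*$ are quotients of the same finite-rank homogeneous piece $R(\lambda)$ of the symmetric algebra, the inclusion of kernels is equivalent to a surjection $\mathcal{M}(\lambda)\twoheadrightarrow\mathbb{W}(\lambda)^*$ compatible in $\lambda$. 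I would establish this by the cocyclicity argument already used twice in the excerpt: the graded component $\mathcal{M}(\lambda)^{(0)}$ is spanned by products $m^{(0)}$, which satisfy exactly the classical Pl\"ucker relations and hence realize $V(\lambda)^*$, matching the degree-zero part $\mathbb{W}(\lambda)^{*,(0)}=V(\lambda)^*$; because $\mathbb{W}(\lambda)^*$ is cocyclic (Lemma~\ref{cocyclic}) with cogenerator in this degree-zero part, any nonzero $\msl_n\otimes\mathbb{K}[t]$-module map out of $\mathcal{M}(\lambda)$ that is compatible with the product structure and nonzero on $V(\lambda)^*$ is forced to be the surjection onto $\mathbb{W}(\lambda)^*$. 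Assembling these weightwise surjections, which are compatible with multiplication by Corollary~\ref{weylmodulering} and the multiplicativity of determinants $m_{\lambda}(s)m_{\mu}(s)$, yields the algebra surjection $\mathcal{M}\twoheadrightarrow\mathbb{W}$ sending $m^{(l)}_{i_1,\dots,i_k}$ to $X^{(l)}_{i_1,\dots,i_k}$ and $\mathcal{M}(\lambda)$ onto $\mathbb{W}(\lambda)^*$, as claimed.
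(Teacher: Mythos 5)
Your proposal is correct and follows essentially the same route as the paper: both present $\mathcal{M}$ and $\mathbb{W}$ as quotients of the same free polynomial algebra on the fundamental generators, reduce to the containment of kernels, and settle that containment weight-by-weight using the fact that the $t$-degree zero parts agree (classical Pl\"ucker relations give $V(\lambda)^*$ on both sides) together with cocyclicity of $\mathbb{W}(\lambda)^*$ with cogenerator of weight $\lambda$ in $t$-degree zero. The only difference is cosmetic: the paper runs the cocyclicity step by applying $U(\msl_n\otimes\mathbb{K}[t])$ to a kernel element $P$ and noting that the resulting submodule meets the degree-zero weight-$\lambda$ line trivially, which is the precise form of the step you describe somewhat loosely as ``forcing'' the map onto $\mathbb{W}(\lambda)^*$.
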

\begin{proof}
Degree zero elements $m_{i_1, \dots, i_k}^{(0)}$ satisfy usual Pl\"ucker relations (see, for example \cite{MS}).
Therefore degree zero submodule $\mathcal{M}(\lambda)^{(0)}$ is isomorphic to $V(\lambda)^*$.
Consider a polynomial algebra generated by auxiliary variables $\chi_{i_1, \dots, i_k}^{(l)}$. We define the action of
$\msl_n \otimes \mathbb{K}[t]$
on $\langle \chi_{i_1, \dots, i_k}^{(l)}\rangle$ in such a way that the map $\chi_{i_1, \dots, i_k}^{(l)}\mapsto m_{i_1, \dots, i_k}^{(l)}$
is an isomorphism of $\langle \chi_{i_1, \dots, i_k}^{(l)}\rangle$ and $\bigoplus_{k=1}^{{\rm rk} (\fg)}\mathcal{M}(\omega_k)$.
We attach the homogeneous degree $\omega_k$ and t-degree $l$ to any variable of the form $\chi_{i_1, \dots, i_k}^{(l)}$.
Then we have the following surjections of algebras with derivations:
\[\phi:\mathbb{K}[\chi_{i_1, \dots, i_k}^{(l)}] \twoheadrightarrow \mathcal{M};\]
\[\psi:\mathbb{K}[\chi_{i_1, \dots, i_k}^{(l)}] \twoheadrightarrow \mathbb{W}.\]
Let $F_\lambda$ be the submodule of $\mathbb{K}[\chi_{i_1, \dots, i_k}^{(l)}]$ of degree $\lambda$.
Take a homogeneous polynomial $P\in \mathbb{K}[\chi_{i_1, \dots, i_k}^{(l)}]$ of degree $\lambda$. Assume that $\phi(P)=0$.
Then $U(\msl_n \otimes \mathbb{K}[t]).P\cap F_\lambda^0$ does not contain an element of weight $\lambda$. Therefore
$\psi(P)=0$ because $\mathbb{W}(\lambda)^*$ is cocyclic with cogenerator of weight $\lambda$ in t-degree $0$.
\end{proof}

Now we construct a family of (quadratic) relations for the elements $m_{i_1, \dots, i_k}^{(l)}$.
We take two sets of numbers $\sigma=\{i_1, \dots, i_q\}$ and
$\tau=\{j_1, \dots, j_p\}$, $i_1<i_2 \dots,<i_q$,
$j_1<j_2 \dots< j_p$, $q\leq p$. For $k\in \mathbb{N}$ we consider $p+k=a+b$ pairwise distinct numbers
$i_{f_1}, \dots, i_{f_a}$, $j_{g_1}, \dots, j_{g_b}$. Let
\[
P=\{i_{f_1}, \dots, i_{f_a}\}\sqcup\{j_{g_1}, \dots, j_{g_b}\}.
\]
Denote $\sigma\backslash P=\{i_{f'_1}, \dots, i_{f'_{q-a}}\}$,
$\tau\backslash P=\{j_{g'_1}, \dots, j_{g'_{p-b}}\}$.
For any cardinality $a$ subset $A \subset P$,
\[
A=\{y_1, \dots, y_a\},\ P\backslash A=\{y_{a+1}, \dots, y_{a+b}\}
\]
with $y_1<\dots <y_a$ and $y_{a+1}<\dots <y_{a+b}$ we consider
\[\eta_1(\sigma,\tau,P,A)=(i_{f'_1}, \dots, i_{f'_{q-a}},y_1, \dots, y_a),\]
\[\eta_2(\sigma,\tau,P,A)=(j_{g'_1}, \dots, j_{g'_{p-b}},y_{a+1}, \dots, y_{a+b}).\]
Denote by ${\rm sign}(\sigma,\tau,P,A)$ the sign of the permutation shuffling the tuple
$(y_1, \dots, y_a,y_{a+1}, \dots, y_{a+b})$ into $\{i_{f_1}, \dots, i_{f_a},j_{g_1}, \dots, j_{g_b}\}$.
\begin{prop}\label{minorplueckerequation}
For any $0 \leq k'\leq k-1$ we have the following equality in $\mathbb{W}[[s]]$:
\begin{equation}\label{plueckerequation}
\sum_{A\subset P, |A|=a}(-1)^{{\rm sign}(\sigma,\tau,P,A)}
\frac{\partial^{k'} m_{\eta_1(\sigma,\tau,P,A)}(s)}{\partial s^{k'}} m_{\eta_2(\sigma,\tau,P,A)}(s)=0.
\end{equation}
\end{prop}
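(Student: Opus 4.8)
The plan is to prove the identity already for genuine minors and then transport it to $\mathbb{W}[[s]]$ via the surjection $\mathcal{M}\twoheadrightarrow\mathbb{W}$ of Proposition~\ref{frommatrixtoweyl}, which sends $m^{(l)}_{I}\mapsto X^{(l)}_{I}$ and hence $m_I(s)\mapsto X_I(s)$. Thus it suffices to establish \eqref{plueckerequation} as a polynomial identity for the minors of the generic matrix $Z(s)=(z_{ij}(s))$ inside $\mathbb{K}[z_{ij}^{(l)}][[s]]$. To isolate the effect of the \emph{one-sided} derivative $\partial_s^{k'}$ (applied only to the first factor), I would introduce a second spectral variable $u$ and set
\[
H(u,s)=\sum_{A\subset P,\,|A|=a}(-1)^{{\rm sign}(\sigma,\tau,P,A)}\,m_{\eta_1(\sigma,\tau,P,A)}(u)\,m_{\eta_2(\sigma,\tau,P,A)}(s).
\]
Since $\partial_u^{k'}H(u,s)\big|_{u=s}$ is exactly the left-hand side of \eqref{plueckerequation}, it is enough to show that $H(u,s)$ vanishes to order at least $k$ along the diagonal $u=s$, i.e. $H(u,s)=O\big((u-s)^k\big)$; this annihilates all derivatives of order $k'\le k-1$.

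The next step is an exterior-algebra reformulation. Writing $R_1(s),\dots,R_n(s)$ for the rows of $Z(s)$ as elements of $(\mathbb{K}^n)^*$, put $\rho_m(s)=R_1(s)\wedge\dots\wedge R_m(s)\in\Lambda^m(\mathbb{K}^n)^*$, so that $m_I(s)=\langle \rho_{|I|}(s),e_I\rangle$. Contracting along the fixed indices gives
\[
\alpha(u)=\iota_{e_{\sigma\setminus P}}\rho_q(u)\in\Lambda^{a}(\mathbb{K}^n)^*,\qquad
\beta(s)=\iota_{e_{\tau\setminus P}}\rho_p(s)\in\Lambda^{b}(\mathbb{K}^n)^* .
\]
Because contraction of $\rho_q$ produces a combination of $a$-fold wedges of $R_1,\dots,R_q$ (and likewise $\beta$ of $R_1,\dots,R_p$), and because the alternating sum over $A$ carrying the signs ${\rm sign}(\sigma,\tau,P,A)$ is precisely the $(a,b)$-component of the comultiplication dual to the wedge product, one obtains $H(u,s)=\pm\,\langle \alpha(u)\wedge\beta(s),\,e_P\rangle$, the overall sign being the $s$-independent contribution of the fixed factors $e_{\sigma\setminus P}$, $e_{\tau\setminus P}$.

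It then remains to read off the order of vanishing at $u=s$. Expanding $\alpha(u)=\sum_{m\ge 0}(u-s)^m\alpha_m(s)$, the Leibniz rule gives the Taylor coefficients
\[
\alpha_m(s)=\iota_{e_{\sigma\setminus P}}\!\!\sum_{d_1+\dots+d_q=m}\frac{R_1^{(d_1)}(s)\wedge\dots\wedge R_q^{(d_q)}(s)}{d_1!\cdots d_q!},
\]
so each term of $\alpha_m(s)$ is an $a$-fold wedge of vectors $R_i^{(d_i)}(s)$ with $i\le q$ and total order $\sum d_i=m$. At most $m$ of these factors can carry a positive order, hence at least $a-m$ of them have order $0$ and lie in $V:=\mathrm{span}\{R_1(s),\dots,R_p(s)\}$. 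Since $\beta(s)$ contributes $b$ further order-$0$ factors from $V$, every term of $\alpha_m(s)\wedge\beta(s)$ contains at least $(a+b)-m=(p+k)-m$ wedge factors lying in $V$. As $\dim V\le p$, such a wedge vanishes unless $(p+k)-m\le p$, i.e. unless $m\ge k$. Therefore the coefficient of $(u-s)^m$ in $H(u,s)$ vanishes for every $m\le k-1$, so $H(u,s)=O\big((u-s)^k\big)$ and \eqref{plueckerequation} follows.

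The only genuinely delicate point is the bookkeeping in the second step, namely verifying that the combinatorial sign ${\rm sign}(\sigma,\tau,P,A)$ matches the Koszul sign produced by the comultiplication-contraction duality; once this is settled the vanishing is forced purely by the dimension count ``$(p+k)-m$ order-$0$ vectors in a space of dimension $\le p$'', which is the conceptual heart of the relation and the precise mechanism by which the bound $k'\le k-1$ enters.
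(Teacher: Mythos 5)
Your argument is correct, and its engine is the same pigeonhole that drives the paper's proof: an alternating expression in the $p+k$ indices of $P$ whose entries are forced to lie in a module generated by too few elements. The packaging, however, is genuinely different. The paper applies the Leibniz rule to $\partial_s^{k'}m_{\eta_1}(s)$, decomposes over multi-indices $(k_1,\dots,k_q)$ with $\sum k_j=k'$, and for each term builds an ``extended column'' of length at most $p+k'\le p+k-1$; the shuffle sum is then an alternating multilinear function of $p+k$ such columns and vanishes. You instead introduce a second spectral parameter, identify the left-hand side with $\partial_u^{k'}H(u,s)\big|_{u=s}$, and show $H(u,s)=O((u-s)^k)$ by Taylor-expanding $\alpha(u)$ in $u-s$ and counting underived rows: each coefficient $\alpha_m\wedge\beta$ with $m\le k-1$ contains at least $p+k-m\ge p+1$ factors from the submodule generated by $R_1(s),\dots,R_p(s)$, hence vanishes. (One should read ``$a$-fold wedge'' as ``$\mathbb{K}[z_{ij}^{(l)}][[s]]$-combination of $a$-fold wedges of the $R_i^{(d_i)}$'' after expanding the contraction, and note that when $a<m$ the count is still saved because then $b=p+k-a\ge p+2$; neither point is a gap.) Your decomposition by Taylor order in $u-s$ and the paper's decomposition by Leibniz multi-index are dual ways of slicing the same sum, so the two proofs are equivalent in content; what your version buys is a cleaner conceptual statement (order of vanishing along the diagonal) and a uniform treatment of all $k'\le k-1$ at once, at the cost of the comultiplication--contraction sign verification, which you correctly flag and which the paper's proof also leaves informal (it simply asserts that $S(k_1,\dots,k_q)$ is alternating in the columns indexed by $P$). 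The reduction from $\mathbb{W}[[s]]$ to an identity among genuine minors via Proposition~\ref{frommatrixtoweyl} is likewise what the paper implicitly does, so no issue there.
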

\begin{proof}
For any minor $m_{i_1, \dots, i_q}(s)$ we have the Leibniz rule:
\begin{equation}
\frac{\partial^{k'} m_{i_1, \dots, i_q}(s)}{\partial s^{k'}}=
\sum_{k_1+\dots+k_q=k'}\det \left(
                                 \begin{array}{cccc}
                                   \frac{\partial^{k_1} z_{1,i_1}(s)}{\partial s^{k_1}} & \frac{\partial^{k_1}z_{1,i_2}(s)}{\partial s^{k_1}} & \cdots & \frac{\partial^{k_1}z_{1,i_q}(s)}{\partial s^{k_1}} \\
                                   \frac{\partial^{k_2} z_{2,i_1}(s)}{\partial s^{k_2}} & \frac{\partial^{k_2}z_{2,i_2}(s)}{\partial s^{k_2}} & \cdots & \frac{\partial^{k_2}z_{2,i_q}(s)}{\partial s^{k_2}} \\
                                   \vdots & \vdots & \ddots & \vdots \\
                                   \frac{\partial^{k_q} z_{q,i_1}(s)}{\partial s^{k_q}} & \frac{\partial^{k_q}z_{q,i_2}(s)}{\partial s^{k_q}} & \cdots & \frac{\partial^{k_q}z_{q,i_q}(s)}{\partial s^{k_q}} \\
                                 \end{array}
                               \right).
\end{equation}
Hence equation \eqref{plueckerequation} can be rewritten in the following way:
\begin{multline}\label{changesummorderpluecker}
\sum_{k_1+\dots+k_q=k'} \sum_{A\subset P, |A|=a}(-1)^{{\rm sign}(\sigma,\tau,P,A)}\\ \det \left(
           \begin{array}{cccc}
                \frac{\partial^{k_1} z_{1,i_{f'_1}}(s)}{\partial s^{k_1}} & \cdots & \frac{\partial^{k_1}z_{1,y_a}(s)}{\partial s^{k_1}} \\
                \vdots &  \ddots & \vdots \\
                \frac{\partial^{k_q} z_{q,i_{f'_1}}(s)}{\partial s^{k_q}} & \cdots & \frac{\partial^{k_q}z_{q,y_a}(s)}{\partial s^{k_q}} \\
                \end{array}\right)
								m_{\eta_2(\sigma,\tau,P,A)}(s)=0.
\end{multline}
We consider a summand of the left hand side of equation \eqref{changesummorderpluecker}:
\begin{multline}
S(k_1,\dots,k_q)=\sum_{A\subset P, |A|=a}(-1)^{{\rm sign}(\sigma,\tau,P,A)}\\ \det \left(
                                 \begin{array}{cccc}
                                   \frac{\partial^{k_1} z_{1,i_{f'_1}}(s)}{\partial s^{k_1}} & \cdots & \frac{\partial^{k_1}z_{1,y_a}(s)}{\partial s^{k_1}} \\
                                   \vdots &  \ddots & \vdots \\
                                   \frac{\partial^{k_q} z_{q,i_{f'_1}}(s)}{\partial s^{k_q}} & \cdots & \frac{\partial^{k_q}z_{q,y_a}(s)}{\partial s^{k_q}} \\
                                 \end{array}
                               \right)m_{\eta_2(\sigma,\tau,P,A)}(s).
\end{multline}
We note that there is less than or equal to $k-1$ numbers $j$ such that $k_j>0$. For any $i \in P$ define the column
\[
\left(z_{1,i}(s),\dots, z_{p,i}(s),\frac{\partial^{k_1}z_{1,i}(s)}{\partial s^{k_1}}, \dots, \frac{\partial^{k_q}z_{q,i}(s)}{\partial s^{k_q}}\right)^t,
\]
where the element $\frac{\partial^{k_j}z_{j,i}(s)}{\partial s^{k_j}}$ is skipped if $k_j=0$. All these columns lie in a free
$\mathbb{K}[z_{ij}^{(l)}][[s]]$-module of rank  less than or equal to $p+k-1$.
Consider the space dual to the linear space of these columns.
It is easy to see that $S(k_1,\dots,k_q)$ is multilinear
and alternating function on this space. Indeed take $i, i'\in P$. If for some $A \subset P$: $i,i' \in A$ or $i ,i' \notin A$,
then the corresponding summand is alternating. If $i \in A$, $i' \notin  A$,
then the summand corresponding to the subset $(A\backslash \{i\})\cup \{i'\}$ has another sign.
Therefore this function is equal to $0$ and thus $S(k_1,\dots,k_q)=0$.
\end{proof}

\begin{cor}\label{Weylpluecker}
The following relation holds in $\mathbb{W}$:
\begin{equation}
\sum_{A\subset P, |A|=a}(-1)^{{\rm sign}(\sigma,\tau,P,A)}
\frac{\partial^{k'} X_{\eta_1(\sigma,\tau,P,A)}(s)}{\partial s^{k'}} X_{\eta_2(\sigma,\tau,P,A)}(s)=0.
\end{equation}
\end{cor}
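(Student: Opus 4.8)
The plan is to derive the stated relation in $\mathbb{W}$ directly from the relation \eqref{plueckerequation} in $\mathbb{W}[[s]]$ established in Proposition \ref{minorplueckerequation}. The crucial observation is that Proposition \ref{frommatrixtoweyl} furnishes a surjection of algebras $\mathcal{M}\twoheadrightarrow\mathbb{W}$ which, on fundamental generators, sends $m^{(l)}_{i_1,\dots,i_k}$ to $X^{(l)}_{i_1,\dots,i_k}$ for every $k$-tuple and every $l\geq 0$. The relation \eqref{plueckerequation} is an identity in $\mathbb{W}[[s]]$ whose coefficients are polynomials in the variables $m^{(l)}_{i_1,\dots,i_k}$; in fact, it is already the image under the surjection $\mathcal{M}\twoheadrightarrow\mathbb{W}$ of the corresponding identity in $\mathcal{M}[[s]]$.

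First I would extend the surjection $\mathcal{M}\twoheadrightarrow\mathbb{W}$ of Proposition \ref{frommatrixtoweyl} coefficient-by-coefficient to a map $\mathcal{M}[[s]]\to\mathbb{W}[[s]]$ of formal power series rings. Since $m_{\eta_1(\sigma,\tau,P,A)}(s)=\sum_{l\geq 0} m^{(l)}_{\eta_1(\sigma,\tau,P,A)} s^l$ and $X_{\eta_1(\sigma,\tau,P,A)}(s)=\sum_{l\geq 0} X^{(l)}_{\eta_1(\sigma,\tau,P,A)} s^l$, the algebra surjection sends $m_{\eta_1}(s)\mapsto X_{\eta_1}(s)$ term by term. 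Next I would observe that the derivation $\partial/\partial s$ acts on each formal power series by shifting coefficients, $\partial^{k'}(\sum_l c_l s^l)/\partial s^{k'}=\sum_l \tfrac{(l+k')!}{l!}c_{l+k'}s^l$, and this commutes with the coefficient-wise extension of the surjection; hence
\[
\frac{\partial^{k'} m_{\eta_1(\sigma,\tau,P,A)}(s)}{\partial s^{k'}}\longmapsto
\frac{\partial^{k'} X_{\eta_1(\sigma,\tau,P,A)}(s)}{\partial s^{k'}}.
\]
Because the map is an algebra homomorphism it also respects the product $m_{\eta_1}\,m_{\eta_2}$ and the $\mathbb{K}$-linear combination with coefficients $(-1)^{{\rm sign}(\sigma,\tau,P,A)}$.

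Applying this homomorphism to both sides of \eqref{plueckerequation} then transports the identity verbatim, replacing every $m$ by the corresponding $X$, and yields the desired relation in $\mathbb{W}$. There is essentially no obstacle here: the only point requiring a word is that the surjection is defined on $\mathcal{M}$, while the left-hand side of \eqref{plueckerequation} lives a priori in $\mathbb{K}[z_{ij}^{(l)}][[s]]$; but the proof of Proposition \ref{minorplueckerequation} shows the relation already holds inside $\mathcal{M}[[s]]\subset\mathbb{K}[z_{ij}^{(l)}][[s]]$ (each summand $S(k_1,\dots,k_q)$ is a polynomial in the minors $m^{(l)}$), so the homomorphism applies. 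Equating coefficients of each power $s^l$ then gives the asserted family of relations in $\mathbb{W}$ itself, as claimed.
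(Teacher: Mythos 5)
Your proposal is correct and matches the paper's (implicit) argument: the paper states Corollary \ref{Weylpluecker} without proof precisely because it is the image of the identity of Proposition \ref{minorplueckerequation} under the surjection $\mathcal{M}\twoheadrightarrow\mathbb{W}$ of Proposition \ref{frommatrixtoweyl}, extended coefficient-wise to $[[s]]$ and commuting with $\partial/\partial s$. Your added remark that the identity of Proposition \ref{minorplueckerequation} really lives in $\mathcal{M}[[s]]$ (the proof is a computation with minors in $\mathbb{K}[z_{ij}^{(l)}][[s]]$) is an accurate and welcome clarification of the paper's slightly loose phrasing.
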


Now we have the set of relations on generators of $\mathbb{W}$. We want to write down a linearly independent set of these relations.
We prepare the following definitions.
\begin{dfn}
For two strictly increasing columns of numbers
\[
\sigma=(\sigma_1, \dots, \sigma_{l_{\sigma}})^t,\quad \tau=(\tau_1, \dots, \tau_{l_{\tau}})^t
\]
we write $\sigma<\tau$ if $l_{\sigma}>l_{\tau}$
 or $l_{\sigma}=l_{\tau}$ and for some $j$ for any $j'>j$
$\sigma_{j'}=\tau_{j'}$ and $\sigma_j<\tau_j$.
\end{dfn}

\begin{dfn}\label{kdefinition}
Assume that for some $\sigma<\tau$ we have the following set of inequalities:
\begin{gather*}
\sigma_{l_{\tau}}\leq\tau_{l_{\tau}}, \dots, \sigma_{j_1+1}\leq\tau_{j_1+1},\\
\sigma_{j_1}>\tau_{j_1}, \sigma_{j_1-1}\geq\tau_{j_1-1}, \dots, \sigma_{j_2+1}\geq\tau_{j_2+1},\\
\sigma_{j_2}<\tau_{j_2},\sigma_{j_2-1}\leq\tau_{j_2-1},\dots.
\end{gather*}
We define strictly decreasing sequence of elements
\[
P(\sigma,\tau)=(\sigma_{l_{\sigma}}, \sigma_{l_{\sigma}-1}, \dots, \sigma_{l_{\tau}+1},
\dots, \sigma_{j_1+1}, \sigma_{j_1}, \tau_{j_1}, \tau_{j_1-1}, \dots, \tau_{j_2}, \sigma_{j_2}, \dots).
\]
We set
\[
k(\sigma,\tau)=|P(\sigma,\tau)|-l_{\sigma}.
\]
\end{dfn}


\begin{example}
In the following examples the numbers $k(\sigma,\tau)$ are equal to $2$, $1$ and $0$ respectively. The elements of the sets $P(\sigma,\tau)$ are
highlighted by the boldface font and are given by $(7,6,5,4,3,1)$, $(7,5,4,3,1)$, $(7,5,3,2)$ respectively.
\label{snakeexample}
\[
\begin{tabular}{ccc}
${\bf 1}$ & $\le $ & $2$\\
${\bf 3}$ &  $<$  & ${\bf 4}$ \\
${\bf 6}$ & $>$ & ${\bf 5}$\\
${\bf 7}$ & $<$ & ${8}$\\
\end{tabular},\qquad
\begin{tabular}{ccc}
${2}$ & $\ge $ & ${\bf 1}$\\
${\bf 4}$ &  $>$  & ${\bf 3}$ \\
${\bf 5}$ & $\le $ & ${6}$\\
${\bf 7}$ &  &
\end{tabular},\qquad
\begin{tabular}{ccc}
${\bf 2}$ & $\le$ & $2$\\
${\bf 3}$ &  $\le$  & ${4}$ \\
${\bf 5}$ & $\le$ & ${5}$\\
${\bf 7}$ & $\le$ &
\end{tabular}.
\]
\end{example}

The algebra $\mathbb{W}$ for $\msl_n$ is generated by the elements $X_{\tau}^{(l)}$, where $\tau=(\tau_1, \dots, \tau_{l_{\tau}})$,
$1 \leq \tau_1<\dots <\tau_{l{\tau}}\leq n$. We define the following partial order on monomials in $X_{\tau}^{(l)}$.
Let us consider two monomials
\[
u_1=X_{\tau^1}^{(a_1)} \dots X_{\tau^{o}}^{(a_o)},\ u_2=X_{\mu^1}^{(b_1)} \dots X_{\mu^{p}}^{(b_p)},
\]
where $l_{\tau^1}\geq \dots \geq l_{\tau^o}$, $l_{\mu^1}\geq \dots \geq l_{\mu^p}$.
If $p>o$, then $u_2\succ u_1$.
Assume that $p=o$.
If $(l_{\tau^1},\dots , l_{\tau^p})>(l_{\mu^1}, \dots , l_{\mu^p})$ in lexicographic order, then $u_2\succ u_1$.
Assume that $l_{\tau^j}=l_{\mu^j}$, $1 \leq i,j \leq n$.
Let $L=l_{\tau^1}$ and write $\tau^k_j=0$, if $j>l_{\tau^k}$. We consider two sequences:
\[s(u_1)=\left(\sum_{i=1}^p \tau^i_L,\sum_{i=1}^p \tau^i_{L-1}, \dots,\sum_{i=1}^p \tau^i_1\right);\]
\[s(u_2)=\left(\sum_{i=1}^p \mu^i_L,\sum_{i=1}^p \mu^i_{L-1}, \dots,\sum_{i=1}^p \mu^i_1\right).\]
Then if $s(u_2)>s(u_1)$ in usual lexicographic order, then $u_2\succ u_1$.
Assume that $s(u_2)=s(u_1)$. We define the following sequence:
\begin{multline}\label{momomialorder}
sd(u_1)=\Biggl(\sum_{i=1}^p (\tau^i_L-\tau^i_{L-1})^2, \sum_{i=1}^p (\tau^i_{L-1}-\tau^i_{L-2})^2,
\sum_{i=1}^p (\tau^i_L-\tau^i_{L-2})^2, \\
 \sum_{i=1}^p (\tau^i_{L-2}-\tau^i_{L-3})^2,
\sum_{i=1}^p (\tau^i_{L-1}-\tau^i_{L-3})^2,\dots \Biggr)
\end{multline}
and analogously define $sd(u_2)$. Then if $sd(u_2)>sd(u_1)$ in usual lexicographic order, then $u_2\succ u_1$.

\begin{rem}
The order $\succ$ is not sensitive to the upper indices of the variables $X_\tau^{(l)}$. Forgetting these upper indices,
the order  $\succ$ is the total term order.
\end{rem}

Let $\sigma\le \tau$
be two strictly increasing columns of numbers from $1$ to $n$.
\begin{prop}\label{propsnakeplueckerequation}
$a)$.\ For any $k' \leq k(\sigma,\tau)-1$ we have the following equality in $\mathbb{W}[[s]]$:
\begin{equation}\label{snakeplueckerequation}
\sum_{A\subset P(\sigma,\tau), |A|=|\sigma \cap P|}(-1)^{{\rm sign}(\sigma,\tau,P(\sigma,\tau),A)}
\frac{\partial^{k'} X_{\eta_1(\sigma,\tau,P(\sigma,\tau),A)}(s)}{\partial s^{k'}} X_{\eta_2(\sigma,\tau,P(\sigma,\tau),A)}(s)=0.
\end{equation}
$b).$\ We have
$X_{\sigma}^{(a)}X_{\tau}^{(b)}\succeq X_{\eta_1(\sigma,\tau,P(\sigma,\tau),A)}^{(c)}X_{\eta_2(\sigma,\tau,P(\sigma,\tau),A)}^{(d)}$
for any $a,b,c,d$ and the strict inequality holds for all monomials except for the monomials of the form $X_{\sigma}^{(c)}X_{\tau}^{(d)}$.
\end{prop}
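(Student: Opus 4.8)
The plan is to treat the two parts separately: part $a)$ is a specialization of the relations already proved, while part $b)$ carries the genuinely combinatorial content.

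For part $a)$ I would simply feed the set $P=P(\sigma,\tau)$ of Definition \ref{kdefinition} into Corollary \ref{Weylpluecker}. Since $\sigma\le\tau$ means $l_\sigma\ge l_\tau$, the column $\sigma$ plays the role of the longer tuple in Proposition \ref{minorplueckerequation}, and it remains to check three bookkeeping points: that $P(\sigma,\tau)$ is an admissible $P$, i.e.\ a set of pairwise distinct numbers drawn from $\sigma$ and $\tau$ (immediate, since it is strictly decreasing by construction); that the prescribed cardinality $|A|=|\sigma\cap P|$ agrees with the number $a$ of $\sigma$-entries allotted to $\eta_1$ in \eqref{plueckerequation}; and that the integer $k$ attached to this choice equals $k(\sigma,\tau)$. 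The last point is exactly the equality $k(\sigma,\tau)=|P(\sigma,\tau)|-l_\sigma$ of Definition \ref{kdefinition}, so the admissible range $k'\le k-1$ turns into $k'\le k(\sigma,\tau)-1$. With these identifications \eqref{snakeplueckerequation} is literally a case of \eqref{plueckerequation}, and nothing more is needed.

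For part $b)$ I first invoke the Remark that $\succ$ ignores upper indices, so it suffices to compare the unordered column pairs $\{\sigma,\tau\}$ and $\{\eta_1,\eta_2\}$ with $\eta_i=\eta_i(\sigma,\tau,P(\sigma,\tau),A)$. As $\eta_1=(\sigma\setminus P)\sqcup A$ has length $l_\sigma$ and $\eta_2=(\tau\setminus P)\sqcup(P\setminus A)$ has length $l_\tau$, the number of factors and the length sequences coincide, so the first layers of $\succ$ tie and the comparison descends to $s(\cdot)$ and then $sd(\cdot)$. Writing $T_m$ for the sum of the first $m$ entries of $s$, one checks that lexicographic comparison of $s$ is equivalent to comparison of $(T_1,T_2,\dots)$ and that each $T_m$ is a sum of the deepest (largest) entries of the two columns. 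Hence the heart of the matter is the majorization $T_m(\{\sigma,\tau\})\ge T_m(\{\eta_1,\eta_2\})$ for every $m$. The structural input is the explicit shape of $P(\sigma,\tau)$: on each run between two sign changes the snake picks the \emph{smaller} of $\sigma_j,\tau_j$, it keeps the whole overhang of $\sigma$, and at each switch row it keeps \emph{both} entries. I would establish the majorization by an exchange argument: given any admissible $A$, locate the topmost row where $\{(\eta_1)_j,(\eta_2)_j\}$ differs from $\{\sigma_j,\tau_j\}$ and swap a single misplaced element of $P$ between $A$ and $P\setminus A$, moving the configuration one step toward $\{\sigma,\tau\}$; using that the snake always holds the smaller value on the prescribed side, each swap weakly increases all $T_m$ and strictly increases the first $T_m$ at which the two pairs differ. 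This yields $s(\{\sigma,\tau\})\succeq s(\{\eta_1,\eta_2\})$, equality forcing all row sums to agree.

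In the tie case every row satisfies $(\eta_1)_j+(\eta_2)_j=\sigma_j+\tau_j$, and I then turn to $sd$. For two rows with prescribed sums the quantity $\sum_c(c_j-c_{j'})^2$ is extremized by the pairing with the larger within-row spread, which is precisely the one realized by the snake at the switch rows; the ordering of the gaps in \eqref{momomialorder} (nearest rows first, then wider gaps) is arranged exactly so that the snake wins at the first discrepancy. This gives $sd(\{\sigma,\tau\})\succ sd(\{\eta_1,\eta_2\})$ unless the two pairs coincide, which proves the strict inequality together with the stated exception. The hard part will be this final $sd$-layer — controlling the squared differences once $s$ has tied — together with verifying that the single-swap exchange for $s$ never lowers an earlier partial sum; both reduce to elementary but bookkeeping-heavy estimates under the snake constraint. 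That the $sd$-layer cannot be omitted is already visible for $\sigma=(1,3,6,7)$, $\tau=(2,4,5,8)$, where the choice giving $\{(1,4,6,7),(2,3,5,8)\}$ has the same $s$ and is separated from $\{\sigma,\tau\}$ only by $sd$.
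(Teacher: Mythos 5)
Your plan for part $a)$ coincides with the paper's proof, which consists of the single sentence that \eqref{snakeplueckerequation} is a particular case of Corollary \ref{Weylpluecker}; the bookkeeping you list (that $P(\sigma,\tau)$ is strictly decreasing, hence a set of pairwise distinct numbers; that $|A|=|\sigma\cap P|$ matches the parameter $a$; and that $k=|P(\sigma,\tau)|-l_\sigma=k(\sigma,\tau)$) is exactly what makes that sentence correct. For part $b)$ you also follow the paper's route: first the $s$-comparison, with equality forcing $\{(\eta_1)_j,(\eta_2)_j\}=\{\sigma_j,\tau_j\}$ in every row, then the $sd$-comparison at the first row where the within-row assignment is transposed, using that such a transposition is possible only at a switch row of the snake. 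Two caveats on your $s$-step: your ``single swap'' must exchange a \emph{pair} of elements between $A$ and $P\setminus A$ (moving one element changes $|A|$, which is fixed), and the paper sidesteps the full majorization $T_m(\{\sigma,\tau\})\ge T_m(\{\eta_1,\eta_2\})$ by asserting the lexicographic inequality directly from the decreasing property of $P(\sigma,\tau)$; your majorization is a strictly stronger intermediate claim and needs to be checked in the overhang rows where $\eta_2$ is padded.

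The $sd$-layer that you defer as ``bookkeeping-heavy'' is where the real content sits, and there is a concrete point at which your argument (and, in the same place, the paper's) fails as stated. Your principle is the sorted/anti-sorted dichotomy: for fixed row multisets, $\sum_c(c_j-c_{j'})^2$ is maximized by pairing large with small across two rows, and a sign change of the snake between rows $j$ and $j_1$ says precisely that $\{\sigma,\tau\}$ realizes the anti-sorted pairing there. This works when both columns have genuine entries in both rows, but breaks under the paper's zero-padding convention when the transposed row is the top row $l_\tau$ of the shorter column. Take $\sigma=(4,5)$, $\tau=(3)$: then $\sigma<\tau$, $k(\sigma,\tau)=1$, $P(\sigma,\tau)=(5,4,3)$, and \eqref{snakeplueckerequation} is the three-term Pl\"ucker relation among $X_{45}X_3$, $X_{35}X_4$, $X_{34}X_5$. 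One finds $s(X_{45}X_3)=s(X_{35}X_4)=(5,7)$, but with the short column padded by $0$ one gets $sd(X_{45}X_3)=(1+9)=(10)$ and $sd(X_{35}X_4)=(4+16)=(20)$, so $X_{35}X_4\succ X_{45}X_3$, contradicting the claimed leading term. The paper's own proof does not cover this case either: its displayed sign-change configuration requires $\tau_{j_1}$ to exist above the transposed row. The repair is to pad short columns by a large number (as the paper itself does with $2n+1$ when defining $P(I,J)$ in Section \ref{Weyl}) rather than by $0$; with that convention the anti-sorted argument applies in the overhang as well, and your outline becomes a complete proof once the transposition monotonicity and the ``first $sd$-discrepancy'' computation are written out.
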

\begin{proof}
To prove the first claim we note that equality \eqref{snakeplueckerequation} is a particular case of equality \eqref{Weylpluecker}.

Now let us prove part $b)$.
Assume that $\eta_1(\sigma,\tau,P(\sigma,\tau),A)\neq \sigma$.
Since $P(\sigma,\tau)$ is decreasing, we have
\[s(X_\sigma^{(a)} X_\tau^{(b)})\geq
 s(X_{\eta_1(\sigma,\tau,P(\sigma,\tau),A)}^{(c)} X_{\eta_2(\sigma,\tau,P(\sigma,\tau),A)}^{(d)})\]
 and equality holds if $\eta_1(\sigma,\tau,P(\sigma,\tau),A)_j\in \{\sigma_j,\tau_j\}$,
 $\eta_2(\sigma,\tau,P(\sigma,\tau),A)_j\in \{\sigma_j,\tau_j\}$. Assume that for some $j$
 $\eta_2(\sigma,\tau,P(\sigma,\tau),A)_{j'}=\sigma_{j'}$ for all $j'>j$ and $\eta_2(\sigma,\tau,P(\sigma,\tau),A)_{j}=\tau_{j}$.
 Then $\sigma_j,\tau_j\in P(\sigma,\tau)$. Thus for some $j_1$ either
 \[\sigma_j>\tau_j, \sigma_{j+1}=\tau_{j+1}, \dots, \sigma_{j_1-1}=\tau_{j_1-1},
 \sigma_{j_1}<\tau_{j_1}\]
 or
 \[\sigma_j<\tau_j, \sigma_{j+1}=\tau_{j+1}, \dots, \sigma_{j_1-1}=\tau_{j_1-1},
 \sigma_{j_1}>\tau_{j_1}.\]
 This implies that $sd(X_\sigma^{(a)} X_\tau^{(b)})>
 sd(X_{\eta_1(\sigma,\tau,P(\sigma,\tau),A)}^{(c)} X_{\eta_2(\sigma,\tau,P(\sigma,\tau),A)}^{(d)})$.
\end{proof}

\begin{dfn}\label{barW}
We define the algebra $\overline{\mathbb{W}}$ as the quotient of the polynomial algebra
in variables $X_\tau^{(l)}$ by the ideal generated by relations \eqref{snakeplueckerequation}.
\end{dfn}

We have the following chain of the canonical surjections:

\[\overline{\mathbb{W}}\rightarrow \mathcal{\mathbb{M}}\rightarrow \mathbb{W}.\]

Note that all relations \eqref{snakeplueckerequation} are homogenous with respect to the Cartan weight,
with respect to the $t$-degree (counting the sum of the upper indices of the variables) and with respect to
the homogenous degree.
Consider degenerations of these relations with respect to the partial order $"\succ"$:
\begin{equation}\label{degeneratedequations}
\frac{\partial^{k'}X_{\tau}(s)}{\partial s^{k'}}X_{\sigma}(s)=0
\end{equation}
for any $k' \leq k(\sigma,\tau)-1$. We denote this degenerate algebra by $\widetilde{\mathbb{W}}$.

\begin{prop}\label{characterinequalitydegeneration}$\ch \widetilde{\mathbb{W}} \geq \ch\mathbb{W}.$
\end{prop}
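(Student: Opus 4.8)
The plan is to realize $\widetilde{\mathbb{W}}$ as a quotient of a polynomial algebra and to compare it with $\mathbb{W}$ via the surjection constructed in Proposition \ref{frommatrixtoweyl}. Recall that $\mathbb{W}$ is a quotient of the polynomial algebra $\mathbb{K}[\chi_{i_1,\dots,i_k}^{(l)}]$ by the ideal $\mathcal{I}_{\mathbb{W}}$ generated by \emph{all} relations among the generators $X_\tau^{(l)}$; by Corollary \ref{Weylpluecker} this ideal contains all the relations \eqref{snakeplueckerequation}. The degenerate algebra $\widetilde{\mathbb{W}}$ is by construction the quotient of the same polynomial algebra by the ideal $\widetilde{\mathcal{I}}$ generated by the leading-term degenerations \eqref{degeneratedequations} of these relations, taken with respect to the term order $\succ$. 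Since both algebras are multigraded by the Cartan weight, the $t$-degree, and the homogeneous degree $\lambda$, and each graded component is finite-dimensional, it suffices to show the inequality $\dim\widetilde{\mathbb{W}}(\nu,m,\lambda)\geq\dim\mathbb{W}(\nu,m,\lambda)$ in every graded piece.

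First I would make precise the passage from $\mathbb{W}$ to $\widetilde{\mathbb{W}}$ as a flat degeneration induced by the term order. The standard mechanism is that of an initial (leading-term) ideal: for any filtration of a graded algebra compatible with $\succ$, the associated graded algebra has dimension in each graded component at least that of the original, because passing to leading terms can only make relations \emph{more} degenerate and hence can only enlarge (or preserve) the quotient. Concretely, I would argue that $\mathrm{in}_\succ(\mathcal{I}_{\mathbb{W}})\supseteq\widetilde{\mathcal{I}}$ is \emph{not} what we need; rather we exploit that $\widetilde{\mathbb{W}}=\mathbb{K}[\chi]/\widetilde{\mathcal{I}}$, where $\widetilde{\mathcal{I}}$ is generated by only the leading monomials \eqref{degeneratedequations}, so $\widetilde{\mathcal{I}}\subseteq\mathrm{in}_\succ(\mathcal{I}_{\mathbb{W}})$. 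The key point provided by Proposition \ref{propsnakeplueckerequation}$(b)$ is exactly that the leading term of the relation \eqref{snakeplueckerequation} with respect to $\succ$ is the monomial $\frac{\partial^{k'}X_\tau(s)}{\partial s^{k'}}X_\sigma(s)$, so each generator of $\widetilde{\mathcal{I}}$ is genuinely the $\succ$-leading term of a genuine relation in $\mathcal{I}_{\mathbb{W}}$. Therefore $\widetilde{\mathcal{I}}\subseteq\mathrm{in}_\succ(\mathcal{I}_{\mathbb{W}})$ in every multidegree.

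This inclusion immediately yields the character inequality. In each finite-dimensional graded component $\mathbb{K}[\chi](\nu,m,\lambda)$ we have
\begin{equation}
\dim\mathbb{K}[\chi]/\mathcal{I}_{\mathbb{W}}=\dim\mathbb{K}[\chi]/\mathrm{in}_\succ(\mathcal{I}_{\mathbb{W}})\leq\dim\mathbb{K}[\chi]/\widetilde{\mathcal{I}},
\end{equation}
where the first equality is the familiar fact that an ideal and its initial ideal have the same Hilbert function in each graded piece, and the inequality follows from the inclusion $\widetilde{\mathcal{I}}\subseteq\mathrm{in}_\succ(\mathcal{I}_{\mathbb{W}})$ (a smaller ideal gives a larger quotient). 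Reading this as an inequality of the graded dimensions gives $\ch\widetilde{\mathbb{W}}\geq\ch\mathbb{W}$ coefficientwise, which is the assertion.

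The main obstacle, and the step deserving the most care, is the justification that passing to leading terms preserves the Hilbert function \emph{per multigraded component}. Because the variables $\chi_{i_1,\dots,i_k}^{(l)}$ carry arbitrarily large $t$-degree $l$ while each fixed multidegree $(\nu,m,\lambda)$ involves only finitely many of them, I must check that the term order $\succ$ restricts to a well-defined total order on the finitely many monomials of each component (this is exactly the content of the Remark following \eqref{momomialorder}, once the auxiliary indices are accounted for) and that the standard monomial/Gröbner argument applies degree by degree despite the infinitely many generators overall. I would therefore carry out the leading-term comparison strictly within a fixed multigraded component, where everything is finite-dimensional, and invoke Proposition \ref{propsnakeplueckerequation}$(b)$ to identify the leading terms; the remaining bookkeeping is routine.
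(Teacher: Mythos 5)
Your argument is correct and is essentially the paper's own proof: both hinge on Proposition \ref{propsnakeplueckerequation}$(b)$ to identify the $\succ$-initial forms of the relations \eqref{snakeplueckerequation} with the degenerate relations \eqref{degeneratedequations}, and both combine the character-preservation of passing to the associated graded (initial ideal) with the fact that quotienting by the smaller ideal $\widetilde{\mathcal{I}}$ gives a larger character; the paper merely routes this through an intermediate algebra $\overline{\mathbb{W}}$ (the quotient by the known relations, which surjects onto $\mathbb{W}$) instead of degenerating the full relation ideal of $\mathbb{W}$ directly. One cosmetic caveat: the sentence claiming the inclusion $\widetilde{\mathcal{I}}\subseteq\mathrm{in}_\succ(\mathcal{I}_{\mathbb{W}})$ is ``not what we need'' contradicts the rest of your argument, which (correctly) uses exactly that inclusion, and since $\succ$ is only a partial order the ``initial ideal'' should be understood as the ideal of initial forms of a filtration rather than a monomial initial ideal in the Gr\"obner sense.
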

\begin{proof}
Recall the algebra $\overline{\mathbb{W}}$ (Definition \ref{barW}).
We know that $\overline{\mathbb{W}}$ surjects onto $\mathbb{W}$ and hence
$\ch \overline{\mathbb{W}} \geq \ch\mathbb{W}$.

The term order $\succ$ defines the filtration
on the algebra $\overline{\mathbb{W}}$. The associated graded algebra is the quotient of the polynomial algebra
in variables $X_\tau^{(l)}$ by the ideal, which contains all the relations \eqref{degeneratedequations}.
We conclude that $\ch \widetilde{\mathbb{W}} \geq \ch\overline{\mathbb{W}}$.
\end{proof}

\subsection{The character formula}
We compute the character of the degenerated algebra $\widetilde{\mathbb{W}}$.
Recall the notation $(q)_r=\prod_{i=1}^r (1-q^i)$.
For a non-empty subset $\sigma=(\sigma_1,\dots,\sigma_k)\subset\{1,\dots,n\}$ we introduce a variable $r_\sigma$.
\begin{prop}\label{degeneratedcharacter}
\begin{multline}\label{degeneratedcharactereq}
\ch \widetilde{\mathbb{W}}(m_1\omega_1+\dots+m_{n-1}\omega_{n-1})=\\
\sum_{\sum_{|\sigma|=k} r_\sigma=m_k}
\frac{q^{\sum_{\sigma<\tau}k(\sigma,\tau)r_{\sigma}r_{\tau}}\prod_{i=1}^n x_i^{\sum_{\sigma\ni i} r_\sigma}}
{\prod_{\sigma} (q)_{r_\sigma}}.
\end{multline}
\end{prop}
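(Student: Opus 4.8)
The plan is to fix the multidegree and exhibit an explicit monomial basis whose generating function is the right-hand side of \eqref{degeneratedcharactereq}. Every monomial of a given multidegree, i.e. the data recording how many factors $X_\sigma^{(\cdot)}$ occur for each nonempty $\sigma\subset\{1,\dots,n\}$ subject to $\sum_{|\sigma|=k}r_\sigma=m_k$, has the same Cartan weight $\prod_i x_i^{\sum_{\sigma\ni i}r_\sigma}$, so the weight factor is immediate and it remains to compute the $q$-graded dimension of the multidegree-$(r_\sigma)$ component of $\widetilde{\mathbb W}$; I will show it equals $q^{C}/\prod_\sigma (q)_{r_\sigma}$ with $C=\sum_{\sigma<\tau}k(\sigma,\tau)r_\sigma r_\tau$.

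For each column $\sigma$ set $b_\sigma=\sum_{\tau>\sigma}k(\sigma,\tau)r_\tau$, the sum running over the columns larger than $\sigma$ in the order $<$ weighted by their multiplicities ($k(\sigma,\tau)$ as in Definition \ref{kdefinition}). The candidate basis consists of the monomials $\prod_\sigma X_\sigma^{(l_{\sigma,1})}\cdots X_\sigma^{(l_{\sigma,r_\sigma})}$ with $b_\sigma\le l_{\sigma,1}\le\cdots\le l_{\sigma,r_\sigma}$ for every $\sigma$. Substituting $l_{\sigma,j}=b_\sigma+\mu_{\sigma,j}$ identifies the indices of the copies of $\sigma$ with an arbitrary partition into at most $r_\sigma$ parts, so the $\sigma$-factor of the generating function is $q^{r_\sigma b_\sigma}/(q)_{r_\sigma}$; taking the product over $\sigma$ and using $\sum_\sigma r_\sigma b_\sigma=C$ gives exactly $q^{C}/\prod_\sigma (q)_{r_\sigma}$. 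Thus the candidate set has the desired generating function, and it remains to prove that it is a basis of $\widetilde{\mathbb W}$.

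For spanning, the coefficient of $s^m$ in the relation \eqref{degeneratedequations} is $\sum_{a+b=m} a(a-1)\cdots(a-k'+1)\,X_\tau^{(a)}X_\sigma^{(b)}=0$ for $0\le k'\le k(\sigma,\tau)-1$; since the falling factorials span the polynomials of degree $<k(\sigma,\tau)$, this is a Vandermonde system expressing the $k(\sigma,\tau)$ monomials with smallest $\sigma$-index through those with larger $\sigma$-index. Multiplying by spectator monomials and combining these relations over the several copies of the larger columns moves $t$-degree off the larger columns onto the smaller ones. To organize the reduction I introduce the statistic $\Phi=\sum_\sigma w_\sigma\cdot(\text{total }t\text{-degree carried by the copies of }\sigma)$, where $w_\sigma$ is any strictly increasing function of the position of $\sigma$ in the order $<$; every reduction preserves the multidegree and the total $t$-degree while strictly decreasing $\Phi$. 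The crux is a straightening lemma: a monomial in which some copy of $\sigma$ has index $<b_\sigma$ is congruent modulo the relations to a combination of monomials of strictly smaller $\Phi$. The summed bound $b_\sigma$, rather than the naive pairwise bound $\max_\tau k(\sigma,\tau)$, is forced precisely because one must combine the quadratic relations against all $r_\tau$ copies of each larger $\tau$; already for $r_\sigma=1$, $r_\tau=2$ a direct computation shows that the relations push the $\sigma$-index up to $2k(\sigma,\tau)$. Since $\Phi$ is a nonnegative integer, induction on $\Phi$ then shows that the candidate monomials span $\widetilde{\mathbb W}$, giving $\ch\widetilde{\mathbb W}\le$ RHS.

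It remains to prove that the candidate monomials are linearly independent in $\widetilde{\mathbb W}$, and I expect this to be the main obstacle. One route is to upgrade the straightening to a confluent rewriting system and invoke the Diamond Lemma: the reduction rules are attached to pairs $(\sigma,\tau)$, and one must verify that their overlap ambiguities (a triple of columns, or several copies competing for the same $t$-degree) resolve, which is delicate because the rewriting coefficients are genuine falling-factorial/Vandermonde combinations rather than mere signs. A softer route avoids a direct independence proof by a global squeeze: combining the spanning bound $\ch\widetilde{\mathbb W}\le$ RHS just obtained with $\ch\widetilde{\mathbb W}\ge\ch\mathbb W$ from Proposition \ref{characterinequalitydegeneration} and the matching lower bound $\ch\mathbb W\ge$ RHS produced from the Weyl-module basis in Section \ref{Weyl}, one gets RHS $\ge\ch\widetilde{\mathbb W}\ge\ch\mathbb W\ge$ RHS, forcing equality throughout and retroactively certifying that the candidate monomials form a basis. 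Either way the equality \eqref{degeneratedcharactereq} follows.
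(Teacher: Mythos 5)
Your reduction of the problem to a multidegree-by-multidegree computation is right, and your candidate set (with $b_\sigma=\sum_{\tau>\sigma}k(\sigma,\tau)r_\tau$) does have generating function equal to the right-hand side of \eqref{degeneratedcharactereq} — indeed it is essentially the basis of Corollary \ref{corssst}. But as a proof of the proposition the proposal has a genuine gap on both sides of the equality. For the spanning direction, everything hinges on the straightening lemma you yourself flag as ``the crux'': that a monomial with some $l_{\sigma,1}<b_\sigma$ can be rewritten, modulo the relations \eqref{degeneratedequations}, in terms of monomials of smaller $\Phi$. The single-pair Vandermonde observation only controls one copy of $X_\sigma$ against one copy of $X_\tau$ (bound $k(\sigma,\tau)$); pushing the index of one copy of $X_\sigma$ all the way up to the \emph{summed} bound $\sum_{\tau>\sigma}k(\sigma,\tau)r_\tau$ requires an iterated interaction with all $r_\tau$ copies of every larger $\tau$ simultaneously, and no argument is given that this iteration succeeds or terminates with that bound. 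For the independence direction you offer either an unverified confluence check or a squeeze through $\ch\mathbb{W}$; the squeeze is not circular, but it still presupposes the unproved spanning bound, and it imports all of Section \ref{Weyl} (the fusion basis and the identity \eqref{baschar}) into a statement the paper treats as a self-contained combinatorial computation. So neither direction is actually established.

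The paper's proof takes a different and much shorter route that avoids straightening entirely: it realizes the graded dual of the multidegree component $\widetilde{\mathbb{W}}(\br)$ inside polynomials in variables $Y_{\sigma,j}$ via $\xi\mapsto \xi\bigl(\prod_\sigma X_\sigma(Y_{\sigma,1})\cdots X_\sigma(Y_{\sigma,r_\sigma})\bigr)$. Symmetry in each group $Y_{\sigma,\bullet}$ is automatic, and the relations \eqref{degeneratedequations} translate exactly into divisibility of the image by $(Y_{\sigma,j_1}-Y_{\tau,j_2})^{k(\sigma,\tau)}$, so the dual space is identified with $\prod(Y_{\sigma,j_1}-Y_{\tau,j_2})^{k(\sigma,\tau)}$ times the tensor product of rings of symmetric polynomials; its character is read off as $q^{\sum_{\sigma<\tau}k(\sigma,\tau)r_\sigma r_\tau}/\prod_\sigma(q)_{r_\sigma}$ in one stroke, giving both inequalities at once. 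The monomial basis you propose is then deduced \emph{from} the proposition (Corollary \ref{corssst}) by a leading-term argument on the dual polynomials, rather than used to prove it. If you want to salvage your approach, you should either prove the straightening lemma honestly (which is the hard combinatorial work the functional realization is designed to bypass) or switch to the dual-space argument.
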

\begin{proof}
We first note that the space $\widetilde{\mathbb{W}}(\la)$ has several gradings. First of all,
attaching degree $l$ to a variable
$X_\sigma^{(l)}$ one gets the standard degree grading. As usual, the power of the variable $q$ takes care of this
grading in the character formula. Now, $\widetilde{\mathbb{W}}(\la)$ is additionally graded by the group
$\bZ_{\ge 0}^{2^n-2}$ with the coordinates
labeled by the proper non-empty subsets $\sigma\subsetneq\{1,\dots,n\}$ (since the relations \eqref{degeneratedequations} are homogeneous).
More concretely, the homogeneous part of degree $\br=(r_\sigma)_\sigma$ is spanned by the monomials in $X_\tau^{(l)}$
such that the number of factors of the form $X_\sigma^{(l)}$ is exactly $r_\sigma$. So it suffices to find the
$q$-character of homogeneous part $\widetilde{\mathbb{W}}(\br)$ of $\widetilde{\mathbb{W}}$ of
degree $\br$. We note that this part
sits in $\widetilde{\mathbb{W}}(\la)$ if and only if $\sum_{|\sigma|=k} r_\sigma=m_k$ for all $k=1,\dots,n-1$.

We consider a functional realization of the dual space of
$\widetilde{\mathbb{W}}(\br)$. Namely, given a linear function $\xi$ on the space
$\widetilde{\mathbb{W}}(\br)$
we attach to it the polynomial $f_\xi$ in variables $Y_{\sigma,j}$, $\sigma\subset \{1,\dots,n\}$, $1\le j\le r_\sigma$ defined as follows.
Recall $X_\sigma(s)=\sum_{l\ge 0} X_\sigma^{(l)}s^l$. Then
\begin{equation}\label{dual}
f_\xi=\xi \bigl(\prod_{\sigma} X_\sigma(Y_{\sigma,1})\dots X_\sigma(Y_{\sigma,r_\sigma})\bigr).
\end{equation}
We claim that formula \eqref{dual} defines an isomorphism between the space of functionals on
$\widetilde{\mathbb{W}}(\br)$
and the space ${\rm Pol}(\br)$ of polynomials $f$ in variables $Y_{\sigma,j}$ subject to the following conditions:
\begin{itemize}
\item $f$ is symmetric in variables $Y_{\sigma,j}$ for each $\sigma$,
\item $f$ is divisible by $(Y_{\sigma,j_1}-Y_{\tau,j_2})^{k(\sigma,\tau)}$ for all $\sigma,\tau,j_1,j_2$.
\end{itemize}
The first condition is obvious and the second one comes from the relations \eqref{degeneratedequations}.
We note that the $q$-degree on $\widetilde{\mathbb{W}}(\br)$ attaching degree $l$ to a variable
$X_\sigma^{(l)}$ is now translated into the counting of the total degree in all variables $Y_{\sigma,j}$.
Now the $q$-character of the space  ${\rm Pol}(\br)$ is given by
\begin{equation}\label{Polch}
\frac{q^{\sum_{\sigma,\tau}k(\sigma,\tau)r_{\sigma}r_{\tau}}\prod_{i=1}^n x_i^{\sum_{\sigma\ni i} r_\sigma}}
{\prod_{\sigma} (q)_{r_\sigma}}.
\end{equation}
Indeed, $(q)_r^{-1}$ is the character of the space of symmetric polynomials in $r$ variables and the factor
$\prod_{\sigma<\tau}(Y_{\sigma,j_1}-Y_{\tau,j_2})^{k(\sigma,\tau)}$ produces the numerator of the above formula.
\end{proof}

\begin{cor}\label{corssst}
The homogeneous component $\widetilde{\mathbb{W}}(\br)\subset \widetilde{\mathbb{W}}$ has the basis consisting of monomials of the form
\begin{equation}\label{ssst}
\prod_{\sigma\subset\{1,\dots,2n\}} X_\sigma^{(l_{1,\sigma})}\dots X_\sigma^{(l_{r_\sigma,\sigma})},\quad 0\le l_{1,\sigma}\le \dots\le l_{r_\sigma,\sigma}
\end{equation}
such that $l_{1,\tau}\ge \sum_{\sigma<\tau} k(\sigma,\tau) r_\sigma$ for all $\tau$.
\end{cor}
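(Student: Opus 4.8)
The plan is to read the basis \eqref{ssst} directly off the functional realization used in the proof of Proposition~\ref{degeneratedcharacter}, so that no separate straightening argument is needed. Recall that formula~\eqref{dual} identifies $\widetilde{\mathbb{W}}(\br)^*$ with the space ${\rm Pol}(\br)$ of polynomials in the $Y_{\sigma,j}$ that are symmetric in $Y_{\sigma,\bullet}$ for each $\sigma$ and divisible by $(Y_{\sigma,j_1}-Y_{\tau,j_2})^{k(\sigma,\tau)}$ for all $\sigma<\tau$ and all $j_1,j_2$. Since $f_\xi=\xi\bigl(\prod_\sigma\prod_j X_\sigma(Y_{\sigma,j})\bigr)$ and $X_\sigma(Y)=\sum_l X_\sigma^{(l)}Y^l$, under this identification the monomial $\prod_\sigma\prod_j X_\sigma^{(l_{j,\sigma})}$ is carried to the coefficient-extraction functional sending $f$ to the coefficient of $\prod_{\sigma,j}Y_{\sigma,j}^{\,l_{j,\sigma}}$ in $f$. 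It therefore suffices to produce a basis of ${\rm Pol}(\br)$ in triangular duality with the functionals attached to the monomials \eqref{ssst}.

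The first step is to factor this dual space. Because the linear forms $Y_{\sigma,j_1}-Y_{\tau,j_2}$ are pairwise non-proportional, the divisibility conditions are equivalent to divisibility by the single polynomial $D=\prod_{\sigma<\tau}\prod_{j_1,j_2}(Y_{\sigma,j_1}-Y_{\tau,j_2})^{k(\sigma,\tau)}$, which is itself symmetric in each group $Y_{\sigma,\bullet}$. Hence $f\mapsto f/D$ is an isomorphism of ${\rm Pol}(\br)$ onto the space $\Lambda(\br)$ of polynomials symmetric in each group separately, and a basis of ${\rm Pol}(\br)$ is given by the elements $D\cdot\prod_\sigma m_{\mu^\sigma}(Y_{\sigma,\bullet})$, where $\mu^\sigma$ runs over partitions with at most $r_\sigma$ parts and $m_{\mu^\sigma}$ is the monomial symmetric function.

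Next I would fix the monomial order on the $Y$-variables in which $Y_{\sigma,j}\succ Y_{\tau,j'}$ whenever $\sigma<\tau$, and $Y_{\sigma,1}\succ Y_{\sigma,2}\succ\cdots$ inside each group. With this choice the leading term of each factor $(Y_{\sigma,j_1}-Y_{\tau,j_2})^{k(\sigma,\tau)}$ is $Y_{\sigma,j_1}^{\,k(\sigma,\tau)}$, so the leading monomial of $D$ is $\prod_\sigma\prod_j Y_{\sigma,j}^{\,c_\sigma}$ with $c_\sigma=\sum_{\tau>\sigma}k(\sigma,\tau)r_\tau$, which is exactly the lower bound required of $l_{1,\sigma}$ in the statement. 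Consequently the leading monomial of $D\cdot\prod_\sigma m_{\mu^\sigma}$ is $\prod_\sigma\prod_j Y_{\sigma,j}^{\,c_\sigma+\mu^\sigma_j}$, the parts of each $\mu^\sigma$ being arranged weakly decreasingly along $j$. As $(\mu^\sigma)$ varies these leading monomials are pairwise distinct and range precisely over the multi-indices $(l_{j,\sigma})$ with $l_{1,\sigma}\le\cdots\le l_{r_\sigma,\sigma}$ and $l_{1,\sigma}\ge c_\sigma$, that is, over the monomials \eqref{ssst}.

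Finally, ordering the basis $\{D\prod_\sigma m_{\mu^\sigma}\}$ by its leading monomials, the pairing matrix against the standard monomials \eqref{ssst} is triangular with nonzero diagonal: the functional attached to the leading monomial of $D\prod_\sigma m_{\mu^\sigma}$ vanishes on any basis element whose leading monomial is strictly smaller, and is nonzero on the diagonal. Thus the monomials \eqref{ssst} are in triangular duality with a basis of $\widetilde{\mathbb{W}}(\br)^*$; being linearly independent and equinumerous, in each graded component, with a basis, they form a basis of $\widetilde{\mathbb{W}}(\br)$, and no independent spanning (straightening) step is required. The only real obstacle is the leading-term bookkeeping of the third step: checking that $Y_{\sigma,j_1}^{\,k(\sigma,\tau)}$ is the leading term of the chosen factors, that the exponents accumulate to exactly $c_\sigma$, and that the assignment $(\mu^\sigma)\mapsto(l_{j,\sigma})$ is the claimed order-preserving bijection onto \eqref{ssst}. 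One could instead argue combinatorially, showing that \eqref{ssst} spans via the relations \eqref{degeneratedequations} and that its generating function $\prod_\sigma q^{\,r_\sigma c_\sigma}/(q)_{r_\sigma}$ matches the character \eqref{degeneratedcharactereq}; I expect the duality route to be cleaner, since a naive rewriting of a low-degree $\sigma$-factor against a $\tau$-factor can reintroduce other low-degree factors, so termination of the straightening is not immediate.
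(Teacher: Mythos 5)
Your argument is correct and is essentially the paper's own proof: both pass to the dual space ${\rm Pol}(\br)$ of polynomials divisible by $D=\prod_{\sigma<\tau}\prod_{j_1,j_2}(Y_{\sigma,j_1}-Y_{\tau,j_2})^{k(\sigma,\tau)}$ and extract the exponent bound from the choice of the term $Y_{\sigma,j_1}^{k(\sigma,\tau)}$ in each factor; your explicit triangular dual basis $D\cdot\prod_\sigma m_{\mu^\sigma}$ merely packages into one step what the paper does in two (matching the character of the monomial set with \eqref{Polch}, then proving spanning by showing a functional vanishing on all monomials \eqref{ssst} has $f_\xi=0$). One remark: you silently read the lower bound as $l_{1,\sigma}\ge\sum_{\tau>\sigma}k(\sigma,\tau)r_\tau$, whereas the statement literally imposes $l_{1,\sigma}\ge k(\sigma,\tau)r_\tau$ separately for each $\tau>\sigma$ (i.e.\ a maximum); your reading is the one forced by the character \eqref{Polch} and is what the paper's own proof uses, so the printed condition appears to be missing a summation over $\tau$.
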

\begin{proof}
We note that the character of the set of monomials \eqref{ssst} is equal to \eqref{Polch}. Hence it suffices to show that the elements
\eqref{ssst} span the space $\widetilde{\mathbb{W}}(\br)$.

Assume that there exists an element $\xi\in (\widetilde{\mathbb{W}}(\br))^*$ vanishing on all the monomials \eqref{ssst}. We want to show that
in this case $\xi$ is zero; equivalently, we need to prove that $f_\xi=0$.
A non-zero polynomial divisible by $\prod_{\sigma<\tau}(Y_{\sigma,j_1}-Y_{\tau,j_2})^{k(\sigma,\tau)}$ contains a monomial
\begin{equation}\label{Ymon}
\prod_{\sigma\subset\{1,\dots,2n\}} Y_\sigma^{l_{1,\sigma}}\dots Y_\sigma^{l_{r_\sigma,\sigma}},\quad 0\le l_{1,\sigma}\le \dots\le l_{r_\sigma,\sigma}
\end{equation}
such that $l_{1,\sigma}\ge k(\sigma,\tau)r_\tau$ for any pair $\sigma<\tau$ (coming from the choice of the term $Y_{\sigma,j_1}^{k(\sigma,\tau)}$ in each bracket
$(Y_{\sigma,j_1}-Y_{\tau,j_2})^{k(\sigma,\tau)}$). However, the coefficient in front of monomial \eqref{Ymon} is equal to
$\xi(X_\sigma^{(l_{1,\sigma})}\dots X_\sigma^{(l_{r_\sigma,\sigma})})$. Therefore, if $\xi$ vanishes on all the elements \eqref{ssst}, then
$f_\xi$ is zero.
\end{proof}

\begin{example}
Consider the case of Lie algebra $\fg\simeq \msl_3$. Then the algebra $\widetilde{\mathbb{W}}$
is generated by the elements $X_1^{(l)}$, $X_2^{(l)}$, $X_3^{(l)}$,
$X_{12}^{(l)}$, $X_{13}^{(l)}$, $X_{23}^{(l)}$, $l=0,1,\dots$ subject to the relations
\[
\sum_{l_1+l_2=N} X_1^{(l_1)}X_{23}^{(l_2)} - X_2^{(l_1)}X_{13}^{(l_2)} + X_3^{(l_1)}X_{12}^{(l_2)}
\]
for all $N\ge 0$.
 Our basis consists of elements of the following form:
\[\prod_{j=1}^{r_1}X_1^{(l_{j,1})}\prod_{j=1}^{r_2}X_2^{(l_{j,2})}\prod_{j=1}^{r_3}X_3^{(l_{j,3})}
\prod_{j=1}^{r_{12}}X_{12}^{(l_{j,12})}\prod_{j=1}^{r_{13}}X_{13}^{(l_{j,13})}\prod_{j=1}^{r_{23}}X_{23}^{(l_{j,23})},\]
$l_{j,\sigma}\leq l_{j',\sigma}$ for $j<j'$ and $l_{j,23} \geq r_1$ for all $j$.
\end{example}

\begin{cor}
In Section \ref{Weyl} we show that the characters of the algebras ${\mathbb{W}}$ and its degenerate version  $\widetilde{\mathbb{W}}$ coincide.
This implies that the union of monomials \eqref{ssst} over all $\br$ such that $\sum_{|\sigma|=k} r_\sigma=m_k$ for all $k=1,\dots,n-1$
is a basis of $\mathbb{W}(\lambda)^*$.
\end{cor}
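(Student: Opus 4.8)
The plan is to deduce this corollary formally from the character identity $\ch\widetilde{\mathbb{W}}=\ch\mathbb{W}$ to be proved in Section \ref{Weyl}, combined with the chain of surjections underlying the proof of Proposition \ref{characterinequalitydegeneration}. Recall the auxiliary algebra $\overline{\mathbb{W}}$, the quotient of the polynomial ring in the variables $X_\tau^{(l)}$ by the ideal generated by the relations \eqref{snakeplueckerequation}. By Corollary \ref{Weylpluecker} these relations hold in $\mathbb{W}$, giving a surjection $\overline{\mathbb{W}}\twoheadrightarrow\mathbb{W}$. The term order $\succ$ filters $\overline{\mathbb{W}}$, and its associated graded $\gr\overline{\mathbb{W}}$ is a quotient of the polynomial ring by an ideal containing the degenerate relations \eqref{degeneratedequations}, whence a surjection $\widetilde{\mathbb{W}}\twoheadrightarrow\gr\overline{\mathbb{W}}$. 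Together these yield
\[
\ch\widetilde{\mathbb{W}}\ \geq\ \ch\gr\overline{\mathbb{W}}\ =\ \ch\overline{\mathbb{W}}\ \geq\ \ch\mathbb{W}.
\]

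First I would feed in the equality $\ch\widetilde{\mathbb{W}}=\ch\mathbb{W}$ from Section \ref{Weyl}. As every term of the displayed chain is squeezed between two equal characters, all inequalities become equalities. Since each component graded by the Cartan weight and the $t$-degree is finite-dimensional, a character-preserving surjection of graded spaces is an isomorphism; hence both $\widetilde{\mathbb{W}}\twoheadrightarrow\gr\overline{\mathbb{W}}$ and $\overline{\mathbb{W}}\twoheadrightarrow\mathbb{W}$ are isomorphisms. In particular $\widetilde{\mathbb{W}}\simeq\gr\overline{\mathbb{W}}$ and $\overline{\mathbb{W}}\simeq\mathbb{W}$.

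Next I would transport the basis of Corollary \ref{corssst} across these isomorphisms. The finer $\br$-grading is available on $\widetilde{\mathbb{W}}$ and $\gr\overline{\mathbb{W}}$, the degenerate relations \eqref{degeneratedequations} being $\br$-homogeneous, whereas $\overline{\mathbb{W}}$ carries only the coarser grading by $\lambda$ and the $t$-degree. By Corollary \ref{corssst}, the union of the monomials \eqref{ssst} over all $\br$ with $\sum_{|\sigma|=k}r_\sigma=m_k$ is a basis of the homogeneous-degree-$\lambda$ part of $\widetilde{\mathbb{W}}\simeq\gr\overline{\mathbb{W}}$; these elements are precisely the $\succ$-symbols of the corresponding monomials of $\overline{\mathbb{W}}(\lambda)$. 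As each $t$-graded piece of $\overline{\mathbb{W}}(\lambda)$ is finite-dimensional, a basis of the associated graded lifts to a basis of the filtered space, so these monomials form a basis of $\overline{\mathbb{W}}(\lambda)$. Finally, the isomorphism $\overline{\mathbb{W}}(\lambda)\simeq\mathbb{W}(\lambda)^*$, $\lambda=\sum_k m_k\omega_k$, delivers the asserted basis.

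The entire substantive content sits in the deferred identity $\ch\widetilde{\mathbb{W}}=\ch\mathbb{W}$; granting it, the only points needing care are that passing to $\gr$ can only raise the character (so the squeeze forces every map to be an isomorphism) and that no dimension is lost when the monomial basis is lifted from $\gr\overline{\mathbb{W}}$ to $\overline{\mathbb{W}}$ and then identified with $\mathbb{W}$. The main obstacle is therefore external to this corollary, namely the character computation carried out in Section \ref{Weyl}.
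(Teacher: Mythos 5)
Your argument is correct and is exactly the one the paper intends: the corollary is stated without a separate proof precisely because it follows by squeezing the chain $\ch\widetilde{\mathbb{W}}\geq\ch\gr\overline{\mathbb{W}}=\ch\overline{\mathbb{W}}\geq\ch\mathbb{W}$ from the proof of Proposition \ref{characterinequalitydegeneration} against the equality of characters established in Section \ref{Weyl}, and then lifting the monomial basis of Corollary \ref{corssst} through the resulting isomorphisms. Your filling-in of the details (finite-dimensionality of the bigraded pieces, symbols of monomials being monomials) is accurate and matches the paper's route.
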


\begin{rem}
The $t$-degree zero part of the basis \eqref{ssst} is given by the semi-standard tableaux. In fact, one needs the monomials \eqref{ssst}
with zero $l_{a,\sigma}$ for all $a$ and $\sigma$. This is possible if and only if for any two distinct $\sigma$, $\tau$ such that $r_\sigma>0$ and $r_\tau>0$ one
has $k(\sigma,\tau)=0$. In other words, the indices $\sigma$ of a degree zero basis vector can be packed into the semi-standard tableau.
\end{rem}

\begin{rem}
In contrast to the degenerate algebra $\widetilde{\mathbb{W}}$, the algebra $\mathbb{W}$ does not
have that many gradings. Instead of the group $\bZ_{\ge 0}^{2^n-2}$ the nondegenerate algebra has only Cartan
$\bZ_{\ge 0}^{n-1}$ grading.
\end{rem}

Let us introduce the notation for the $q$-multinomial coefficient
\[
\bin{m_k}{[\br]_k}_q=\frac{(q)_{m_k}}{\prod_{|\sigma|=k} (q)_{r_\sigma}}.
\]
Recall $(q)_\la=\prod_{k=1}^{n-1} (q)_{m_k}$.
\begin{cor}\label{characterineq}
\begin{equation}\label{ineq}
(q)_{\lambda}^{-1}\sum_{\sum_{|\sigma|=k} r_\sigma=m_k}
q^{\sum_{\sigma<\tau}k(\sigma,\tau)r_{\sigma}r_{\tau}}\prod_{i=1}^n x_i^{\sum_{\sigma\ni i} r_\sigma}
\prod_{k=1}^{n-1} \bin{m_k}{[\br]_k}_q\geq {\rm ch}\mathbb{W}(\lambda).
\end{equation}
\end{cor}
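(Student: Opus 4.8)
The plan is to recognize that the left-hand side of \eqref{ineq} is nothing but $\ch\widetilde{\mathbb{W}}(\lambda)$ and then to invoke Proposition \ref{characterinequalitydegeneration}. So the whole statement is a bookkeeping consequence of the two preceding propositions, and I would carry it out in two short steps.

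First I would simplify the product of $q$-multinomial coefficients. By definition $\bin{m_k}{[\br]_k}_q = (q)_{m_k}/\prod_{|\sigma|=k}(q)_{r_\sigma}$. Since every proper non-empty subset $\sigma\subsetneq\{1,\dots,n\}$ has a well-defined cardinality $k$ with $1\le k\le n-1$, the double product $\prod_{k=1}^{n-1}\prod_{|\sigma|=k}(q)_{r_\sigma}$ reindexes bijectively as the single product $\prod_\sigma (q)_{r_\sigma}$, with no factor omitted or repeated. Combined with the identity $(q)_\lambda = \prod_{k=1}^{n-1}(q)_{m_k}$, this yields
\[
\prod_{k=1}^{n-1}\bin{m_k}{[\br]_k}_q = \frac{(q)_\lambda}{\prod_\sigma (q)_{r_\sigma}}.
\]

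Substituting this back into \eqref{ineq}, the prefactor $(q)_\lambda^{-1}$ cancels against the numerator $(q)_\lambda$, so the left-hand side of \eqref{ineq} collapses to
\[
\sum_{\sum_{|\sigma|=k}r_\sigma = m_k}\frac{q^{\sum_{\sigma<\tau}k(\sigma,\tau)r_\sigma r_\tau}\prod_{i=1}^n x_i^{\sum_{\sigma\ni i}r_\sigma}}{\prod_\sigma (q)_{r_\sigma}},
\]
which is exactly the formula for $\ch\widetilde{\mathbb{W}}(\lambda)$ established in Proposition \ref{degeneratedcharacter}. I would then apply Proposition \ref{characterinequalitydegeneration}, which gives $\ch\widetilde{\mathbb{W}} \geq \ch\mathbb{W}$; restricting to the weight-$\lambda$ homogeneous component yields $\ch\widetilde{\mathbb{W}}(\lambda)\geq \ch\mathbb{W}(\lambda)$, i.e.\ precisely the claimed inequality \eqref{ineq}.

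There is no genuine obstacle in this corollary: it is purely a repackaging of Propositions \ref{degeneratedcharacter} and \ref{characterinequalitydegeneration}. The only point that deserves a moment of care is the reindexing in the first step, namely verifying that grouping subsets $\sigma$ by cardinality turns $\prod_\sigma (q)_{r_\sigma}$ into $\prod_{k}\prod_{|\sigma|=k}(q)_{r_\sigma}$ bijectively, so that the cancellation of $(q)_\lambda$ is exact. The substantive content — that the degenerate character bounds the true character from above — has already been done, and this corollary merely records the bound in the multinomial form that will be matched against the Weyl-module character in Section \ref{Weyl}.
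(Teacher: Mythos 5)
Your proposal is correct and follows exactly the paper's own route: the paper's proof is the one-line remark that the corollary is a reformulation of \eqref{degeneratedcharactereq} via the definition of the $q$-multinomial coefficient, combined with Proposition \ref{characterinequalitydegeneration}. You have simply spelled out the cancellation of $(q)_\lambda$ explicitly, which is a faithful (and slightly more careful) rendering of the same argument.
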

\begin{proof}
This is a reformulation of inequality \eqref{degeneratedcharactereq} using the definition
of $q$-multinomial coefficient.
\end{proof}

Recall the relation between the characters of the local and global Weyl modules:
\begin{equation}\label{global-local}
\ch \mathbb{W}(\la)=(q)_\la^{-1}\ch W(\la).
\end{equation}
Let $W(\la)=\bigoplus_{l\ge 0} W(\la)^{(l)}$ be the $q$-degree decomposition of the local Weyl module.
In particular, the character of each $W(\la)^{(l)}$ is a polynomial in $x_1,\dots,x_n$.
Let $\sum_{l\ge 0} q^l C_l(x_1,\dots,x_n)$ be the $q$-expansion of the left hand side of \eqref{ineq} multiplied by $(q)_{\lambda}$.
In particular, each $C_l$ is a Laurent polynomial in $x_i$.
\begin{lem}\label{ineqchar}
Let $j$ be the smallest number such that $\ch W(\lambda)^{(j)}\ne C_j$. Then
$\ch W(\lambda)^{(j)}< C_j$ coefficient-wise.
\end{lem}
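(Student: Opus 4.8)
The plan is to convert the coefficient-wise inequality \eqref{ineq} into a statement about the lowest nonvanishing $q$-coefficient of a single power series, using the positivity of $(q)_\lambda^{-1}$. First I would record the elementary fact that
\[
(q)_\lambda^{-1}=\prod_{k=1}^{n-1}\prod_{i=1}^{m_k}(1-q^i)^{-1}
\]
is a power series in $q$ with nonnegative coefficients and constant term $1$, since each factor $(1-q^i)^{-1}=\sum_{a\ge 0}q^{ia}$ is a generating function with nonnegative coefficients. This is the crucial positivity input.

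Next I would combine \eqref{ineq} with the relation \eqref{global-local}. Let $A(q)=\sum_{l\ge 0}q^l C_l$ denote the left-hand side of \eqref{ineq} multiplied by $(q)_\lambda$ (so that the left-hand side of \eqref{ineq} is exactly $(q)_\lambda^{-1}A(q)$, by the very definition of the $C_l$). Using $\ch\mathbb{W}(\la)=(q)_\lambda^{-1}\ch W(\la)$, the inequality \eqref{ineq} reads
\[
(q)_\lambda^{-1}A(q)\ge (q)_\lambda^{-1}\ch W(\la),
\]
that is, setting $D(q):=A(q)-\ch W(\la)=\sum_{l\ge 0}q^l\bigl(C_l-\ch W(\la)^{(l)}\bigr)$,
\[
(q)_\lambda^{-1}\,D(q)\ge 0
\]
coefficient-wise, meaning that every coefficient of $q^l$ is a Laurent polynomial in the $x_i$ all of whose coefficients are nonnegative.

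Then I would run a lowest-degree-term argument. Write $D_l:=C_l-\ch W(\la)^{(l)}$. By the choice of $j$ we have $D_l=0$ for $l<j$ and $D_j\ne 0$, so $D(q)=\sum_{l\ge j}q^l D_l$. Because $(q)_\lambda^{-1}$ has constant term $1$ and all its other $q$-coefficients are nonnegative, the coefficient of $q^j$ in the product $(q)_\lambda^{-1}D(q)$ equals $D_j$: the potential contributions from strictly lower terms $D_l$ with $l<j$ all vanish. The displayed positivity now forces this $q^j$-coefficient to be nonnegative coefficient-wise in the $x_i$, i.e. $D_j\ge 0$ coefficient-wise. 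Since $D_j\ne 0$, this yields $\ch W(\la)^{(j)}<C_j$ coefficient-wise, as claimed.

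The only real point of the argument — and the step one must be careful about — is that one should \emph{not} multiply the inequality \eqref{ineq} through by $(q)_\lambda$, since that series has negative coefficients and does not preserve coefficient-wise inequalities; instead the positivity of $(q)_\lambda^{-1}$ is what allows the inequality to be transferred intact to the lowest-order coefficient of $D(q)$.
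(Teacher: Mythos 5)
Your proof is correct and is essentially the paper's argument: the paper's own proof is the two-sentence remark that the claim holds trivially for global Weyl modules (since \eqref{ineq} is a coefficient-wise inequality) and then transfers to local ones via \eqref{global-local}, which is exactly your lowest-degree-term extraction. The only cosmetic point is that the full positivity of $(q)_\lambda^{-1}$ is not actually needed — since $D_l=0$ for $l<j$, isolating the $q^j$-coefficient of $(q)_\lambda^{-1}D(q)$ only uses that $(q)_\lambda^{-1}$ has constant term $1$.
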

\begin{proof}
Note that the claim is true for global Weyl modules instead of local. Thus the proposition follows from relation
\eqref{global-local} between the characters.
\end{proof}


\section{Evaluation modules}\label{Weyl}
In this section we consider representations of the Lie algebras $\fg=\msl_n$ and $\fg=\msl_{2n}$.
\subsection{Fusion construction}
For any $k=1,\dots,n-1$ let $v_{\om_k}\in V(\omega_k)$ be a highest weight vector.
Given $\zeta \in \mathbb{K}$  and $k=1,\dots,n-1$ we
consider the evaluation $\fg\T\mathbb{K}[t]$-module $V(\omega_k)_\zeta$, which is the cyclic module with cyclic vector
$v_{\om_k}$ and the following action of current algebra:
\begin{equation}\label{evaluationdefinition}
x \otimes t^a.v=\zeta^a xv, x \in \fg, v \in V(\omega_k).
\end{equation}

Let $\lambda=\sum_{k=1}^{n-1} m_k \omega_k$ and let $(\zeta_{k,i})$, $1 \leq k \leq n-1$, $1 \leq i \leq m_k$
be a tuple of pairwise distinct elements. We consider the tensor product:
\[V(\lambda)_{(\zeta_{k,i})}=\bigotimes_{k=1}^{n-1}\bigotimes_{i=1}^{m_k}V(\omega_k)_{\zeta_{k,i}}.\]

\begin{prop}\cite{CL,FL1,FL2,N}\label{fusionproduct}
The module $V(\lambda)_{(\zeta_{k,i})}$ is cyclic with cyclic vector $v$ of weight $\lambda$. The $t$-degree grading on
$U(\fg \otimes \mathbb{K}[t])$ gives a filtration on $V(\lambda)_{(\zeta_{k,i})}$:
\begin{equation}\label{filtrationevaluation}
\{0\} =F_{-1} \subset U(\fg \otimes 1)v= F_0\subset F_1\subset F_2 \subset \dots
\end{equation}
The corresponding graded module $\bigoplus_{i=0}^\infty F_{i}/F_{i-1}$ is isomorphic to $W(\lambda)$.
\end{prop}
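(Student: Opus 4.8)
The plan is to prove cyclicity first, then construct a surjection from $W(\la)$ onto the associated graded module, and finally match dimensions. Throughout write $v=\bigotimes_{k,i}v_{\om_k}$ for the tensor product of highest weight vectors and, for $x\in\fg$, let $x^{(k,i)}$ denote the action of $x$ on the factor $V(\om_k)_{\zeta_{k,i}}$, so that $x\T t^a$ acts on $V(\la)_{(\zeta_{k,i})}$ as $\sum_{k,i}\zeta_{k,i}^a\,x^{(k,i)}$.

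First I would prove cyclicity. Since the parameters $\zeta_{k,i}$ are pairwise distinct, the associated Vandermonde matrix is invertible, so for each pair $(k,i)$ there are scalars $c_{k,i,a}$ with $\sum_a c_{k,i,a}\,\zeta_{k',i'}^a=\delta_{(k,i),(k',i')}$. Hence each single-factor operator $x^{(k,i)}=\sum_a c_{k,i,a}(x\T t^a)$ lies in the image of $U(\fg\T\mathbb{K}[t])$ acting on $V(\la)_{(\zeta_{k,i})}$. Consequently $U(\fg\T\mathbb{K}[t])\,v\supseteq\bigotimes_{k,i}U(\fg)v_{\om_k}=\bigotimes_{k,i}V(\om_k)=V(\la)_{(\zeta_{k,i})}$, proving that $v$ is a cyclic vector of weight $\la$.

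Next I would verify that the image $\bar v$ of $v$ in $\gr V:=\bigoplus_i F_i/F_{i-1}$ satisfies the defining relations \eqref{weylvanishing1}, \eqref{weylbound1}, \eqref{weylvanishing0} of the local Weyl module. Relation \eqref{weylvanishing1} holds because $v$ is a highest weight vector, so $e_\al\T t^a\,v=\sum_{k,i}\zeta_{k,i}^a(e_\al)^{(k,i)}v=0$. For relation \eqref{weylbound1} one checks that $F_\bullet$ is stable under $\fg\T 1$ (this degree-zero part preserves $t$-degree), so $F_0=U(\fg\T 1)v$ is the irreducible $\fg$-module $V(\la)$ generated by its highest weight vector, on which $(f_{-\al}\T 1)^{\langle\al^\vee,\la\rangle+1}$ vanishes. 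For relation \eqref{weylvanishing0}, note that $h\T t^a$ with $a>0$ acts on $v$ by the scalar $\sum_{k,i}\zeta_{k,i}^a\langle h,\om_k\rangle$, so $h\T t^a\,v$ lies in $F_0\subseteq F_{a-1}$ and its class in $F_a/F_{a-1}$ vanishes. Since $\gr V$ is cyclic over $U(\fg\T\mathbb{K}[t])$ (graded by $t$-degree) generated by $\bar v$, these relations produce a surjection $W(\la)\twoheadrightarrow\gr V$.

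It remains to show this surjection is an isomorphism, and this is the main obstacle. Because $F_\bullet$ is a filtration by $\fg\T 1$-submodules of a finite-dimensional semisimple module, complete reducibility gives $\gr V\simeq V(\la)_{(\zeta_{k,i})}$ as $\fg$-modules, whence $\dim\gr V=\prod_k(\dim V(\om_k))^{m_k}$. It therefore suffices to bound $\dim W(\la)$ from above by the same quantity. Here I would invoke the standard factorization estimate $\dim W(\la)\le\prod_k(\dim W(\om_k))^{m_k}$, which expresses $W(\la)$ as a quotient of a tensor product of fundamental local Weyl modules (see \cite{CL,FL1,FL2}); under the running assumption $W(\om_k)|_{\fg\T 1}\simeq V(\om_k)$ the right-hand side equals $\prod_k(\dim V(\om_k))^{m_k}=\dim\gr V$. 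Thus $W(\la)\twoheadrightarrow\gr V$ is a dimension-preserving surjection, hence an isomorphism. The genuinely hard input is precisely this upper bound: cyclicity and the construction of the surjection are formal, whereas controlling $\dim W(\la)$ from above — equivalently, showing that passing to the associated graded introduces no further collapse — is the content carried by the cited results.
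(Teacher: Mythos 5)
Your proposal is correct, and it is worth noting that the paper itself gives \emph{no} proof of this proposition: it is stated as a citation to \cite{CL,FL1,FL2,N}, so there is no in-paper argument to compare against. Your write-up supplies the standard proof and correctly isolates which parts are formal and which part carries the real content. The cyclicity argument via Lagrange interpolation at the pairwise distinct $\zeta_{k,i}$ is the standard one; the verification that $\bar v$ satisfies \eqref{weylvanishing1}, \eqref{weylbound1} and \eqref{weylvanishing0} in $\bigoplus_i F_i/F_{i-1}$ is correct (in particular your observation that $h\otimes t^a v$ is a scalar multiple of $v$ and hence dies in $F_a/F_{a-1}$ for $a>0$ is exactly the right point); and the reduction of the isomorphism to the upper bound $\dim W(\lambda)\le\prod_k(\dim V(\omega_k))^{m_k}$ is how this is done in the cited literature. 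One small inaccuracy: your parenthetical gloss that this bound ``expresses $W(\lambda)$ as a quotient of a tensor product of fundamental local Weyl modules'' is not how the inequality is actually obtained --- the formal direction goes the other way (the cyclic submodule of $\bigotimes_k W(\omega_k)^{\otimes m_k}$ generated by the product of cyclic vectors is a quotient of $W(\lambda)$, which only yields a lower bound on $\dim W(\lambda)$). The upper bound is the genuinely nontrivial theorem of Chari--Loktev (type $A$), Fourier--Littelmann and Naoi, usually proved via freeness of the global Weyl module over its highest-weight algebra or via explicit spanning sets. Since you explicitly defer that input to the same references the paper cites, this does not affect the correctness of your argument, but the parenthetical justification should be dropped or corrected.
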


Assume that we have a basis $\lbrace f_{\gamma_{o,1}}\otimes t^{l_{o,1}}\dots f_{\gamma_{o,q_o}}\otimes t^{l_{o,q_o}}v\rbrace$
of the space of elements of weight $\mu$ in $V(\lambda)_{(\zeta_{k,i})}$, $\gamma_{o,j}\in \Delta_-$,
$o$ runs from 1 to the dimension of this space. Its character is a polynomial
\[
\sum_o q^{\sum_{j=1}^{q_o}l_{o,j}}=\sum_{i \geq 0} a_{\mu,i}q^i.
\]
Let $\sum_{i \geq 0} b_{\mu,i}q^i$ be the character of the space of elements of weight $\mu$ in $W(\lambda)$.
\begin{lem}\label{inverceineqchar}
Let $j$ be the smallest number such that $a_{\mu,j}\neq b_{\mu,j}$. Then $a_{\mu,j}< b_{\mu,j}.$
\end{lem}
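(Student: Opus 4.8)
The plan is to compare the two $q$-characters through the $t$-degree filtration \eqref{filtrationevaluation} on $V(\lambda)_{(\zeta_{k,i})}$, whose associated graded module is the local Weyl module $W(\lambda)$ by Proposition \ref{fusionproduct}. Write $V=V(\lambda)_{(\zeta_{k,i})}$, let $V_\mu$ be its weight-$\mu$ subspace, and take $F_\bullet$ as in \eqref{filtrationevaluation}. Since $\bigoplus_i F_i/F_{i-1}\simeq W(\lambda)$ as graded $\fg\otimes\mathbb{K}[t]$-modules, the quotient $(F_i\cap V_\mu)/(F_{i-1}\cap V_\mu)$ is identified with the weight-$\mu$, $t$-degree-$i$ component of $W(\lambda)$, which has dimension $b_{\mu,i}$. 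Telescoping (and using $F_{-1}=0$) gives
\[
\dim (F_i\cap V_\mu)=\sum_{i'\le i} b_{\mu,i'}\qquad\text{for all }i,
\]
and summing over all $i$ yields $\dim V_\mu=\sum_i b_{\mu,i}$.

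First I would record that the presented $t$-degree of each basis vector bounds its filtration level. The filtration \eqref{filtrationevaluation} is induced by the $t$-degree filtration of $U(\fg\otimes\mathbb{K}[t])$, so a monomial $f_{\gamma_{o,1}}\otimes t^{l_{o,1}}\dots f_{\gamma_{o,q_o}}\otimes t^{l_{o,q_o}}v$ with $\sum_j l_{o,j}=d$ lies in $F_d\cap V_\mu$. Hence every basis vector whose presented degree is at most $i$ lies in $F_i\cap V_\mu$. These vectors are linearly independent, being part of a basis of $V_\mu$, and their number is exactly $\sum_{i'\le i} a_{\mu,i'}$; comparing with the dimension computed above gives the partial-sum inequality
\[
\sum_{i'\le i} a_{\mu,i'}\le \sum_{i'\le i} b_{\mu,i'}\qquad\text{for all }i.
\]
Since the chosen monomials form a full basis of $V_\mu$, the totals agree: $\sum_i a_{\mu,i}=\dim V_\mu=\sum_i b_{\mu,i}$.

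Finally I would conclude by an elementary argument on partial sums. Let $j$ be the smallest index with $a_{\mu,j}\ne b_{\mu,j}$; then $a_{\mu,i}=b_{\mu,i}$ for $i<j$, so the partial sums agree up to index $j-1$. The displayed inequality at level $j$ then forces $a_{\mu,j}\le b_{\mu,j}$, and together with $a_{\mu,j}\ne b_{\mu,j}$ this gives the asserted strict inequality $a_{\mu,j}<b_{\mu,j}$. The only point that requires care is the direction of the filtration estimate: a monomial of presented $t$-degree $d$ is guaranteed to lie in $F_d$ but may in fact already lie in $F_{d-1}$ (its symbol in $\gr$ can vanish), so the presentation can only over-count the degree. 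This one-sidedness is exactly what produces $\sum_{i'\le i}a_{\mu,i'}\le\sum_{i'\le i}b_{\mu,i'}$, and hence the strict drop at the first discrepancy; the reverse inequality would fail precisely because the basis symbols need not remain a basis of $W(\lambda)$.
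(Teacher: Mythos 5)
Your proof is correct and follows essentially the same route as the paper: both rest on the observation that a monomial of presented $t$-degree $d$ lies in $F_d$ but possibly in a lower filtration step, so the presented degrees can only over-count. Your packaging via the partial-sum inequality $\sum_{i'\le i}a_{\mu,i'}\le\sum_{i'\le i}b_{\mu,i'}$ is in fact a slightly cleaner way to extract the conclusion than the paper's phrasing, but the underlying argument is identical.
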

\begin{proof}
An element
\begin{equation}\label{dif}
f_{\gamma_{o,1}}\otimes t^{l_{o,1}}\dots f_{\gamma_{o,q_o}} \otimes t^{l_{o,q_o}}v
\end{equation}
belongs to $F_{\sum_{i=1}^{q_o} {l_{o,i}}}$.
Then the $t$-degree of the image of this element in the adjoint graded space (aka Weyl module) is less than or equal to
$\sum_{i=1}^{q_o} {l_{o,i}}$.
If $j$ is the smallest number such that $a_{\mu,j}\neq b_{\mu,j}$, then all the elements \eqref{dif} with
$\sum_{i=1}^{q_o} {l_{o,i}}=j$ do not belong to the filtration space $F_{j'}$ for $j'<j$.
In addition, there exists a basis element \eqref{dif} with $\sum_{i=1}^{q_o} {l_{o,i}}>j$
which belongs to $F_j$. Hence $b_{\mu,j}> a_{\mu,j}.$
\end{proof}

\begin{cor}
If there exists a basis of the tensor product $V(\lambda)_{(\zeta_{k,i})}$ such that its character is equal to the
left hand side of \eqref{ineq} multiplied by $(q)_\la$, then the inequality \eqref{ineq} is in fact an equality.
\end{cor}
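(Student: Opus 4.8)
The plan is a squeeze argument: inequality \eqref{ineq} bounds the character of $\mathbb{W}(\lambda)$ from above, the hypothesised basis bounds the character of $W(\lambda)$ from below, and the two bounds, being strict in their respective leading terms, can only be compatible if both are equalities. Throughout I would write $L(x,q)=\sum_{l\ge 0}q^l C_l$ for the left-hand side of \eqref{ineq} multiplied by $(q)_\lambda$, so that $L$ is exactly the series whose coefficients $C_l$ were introduced just before Lemma \ref{ineqchar}, and compare it degree by degree with $\ch W(\lambda)=\sum_{l\ge 0}q^l\ch W(\lambda)^{(l)}$. By hypothesis there is a basis of $V(\lambda)_{(\zeta_{k,i})}$ of the PBW shape appearing in Lemma \ref{inverceineqchar} whose filtration character equals $L$.

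I would argue by contradiction, assuming $L\ne\ch W(\lambda)$. First I would apply Lemma \ref{ineqchar}, which packages \eqref{ineq} together with the local--global relation \eqref{global-local}: if $j_0$ denotes the least $q$-degree with $C_{j_0}\ne\ch W(\lambda)^{(j_0)}$, then $C_{j_0}>\ch W(\lambda)^{(j_0)}$ coefficient-wise. Hence there is a weight $\mu_0$ with $[x^{\mu_0}]C_{j_0}>[x^{\mu_0}]\ch W(\lambda)^{(j_0)}$, while $C_l=\ch W(\lambda)^{(l)}$ for every $l<j_0$.

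Next I would feed the single weight $\mu_0$ into Lemma \ref{inverceineqchar}. Since the basis character is $L$, its weight-$\mu_0$ component is $\sum_i([x^{\mu_0}]C_i)q^i$; by the previous step its coefficients agree with those of the weight-$\mu_0$ part of $\ch W(\lambda)$ for all $i<j_0$ and strictly exceed them at $i=j_0$, so $j_0$ is also the first $q$-degree of disagreement at weight $\mu_0$. Lemma \ref{inverceineqchar} then forces $[x^{\mu_0}]C_{j_0}<[x^{\mu_0}]\ch W(\lambda)^{(j_0)}$, contradicting the strict inequality of the previous paragraph. Therefore $L=\ch W(\lambda)$, and dividing by $(q)_\lambda$ and using \eqref{global-local} turns \eqref{ineq} into an equality.

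The main point requiring care is the clash of indexing conventions between the two lemmas — Lemma \ref{ineqchar} compares whole $x$-coefficients at a fixed $q$-degree, whereas Lemma \ref{inverceineqchar} compares $q$-coefficients at a fixed weight — so the whole argument hinges on first extracting a single witnessing weight $\mu_0$ from the leading discrepancy and only then invoking the opposite one-sided estimate. Everything else is formal bookkeeping of the two strict inequalities; the substantive work, namely exhibiting a basis of $V(\lambda)_{(\zeta_{k,i})}$ with character $L$ so that this corollary applies at all, is precisely what the fusion construction of Proposition \ref{fusionproduct} and the subsequent combinatorics are designed to supply.
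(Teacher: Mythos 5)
Your proposal is correct and is exactly the squeeze argument the paper intends: its own proof just cites Proposition \ref{localglobal}, Lemma \ref{ineqchar} and Lemma \ref{inverceineqchar}, and you have filled in the bookkeeping (extracting a witnessing weight $\mu_0$ from the first $q$-degree discrepancy and playing the two opposite strict inequalities against each other) in the intended way.
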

\begin{proof}
The Corollary is implied by Proposition \ref{localglobal}, Lemma \ref{ineqchar} and Lemma \ref{inverceineqchar}.
\end{proof}
In the rest of the section we construct a basis with the desired property. The construction uses an embedding
of a Weyl module attached to $\msl_n$ into a Weyl module  attached to $\msl_{2n}$.

\subsection{Embedding of Weyl modules}
Let $\fg=\msl_{2n}$ and let $v_1,\dots,v_{2n}$ be the standard basis of the vector representation of $\msl_{2n}$.
We denote the weight of $v_j$ by $\epsilon_j$.
For a cardinality $k$ set $J=(j_1<\dots<j_k) \subset \{1,\dots,2n\}$ we denote by $v_J\in V(\omega_k)$ the wedge product
$v_{j_1}\wedge\dots\wedge v_{j_k}$. We also denote by $f_{pq}\in\msl_{2n}$, $p<q$ the matrix unit sending $v_p$ to $v_q$.

We consider the inclusion $\msl_n\subset \msl_{2n}$ via tautological map $e_{ij} \mapsto e_{ij}$,
$1 \leq i,j \leq n$, where the first matrix unit is an element of $\msl_n$ and the second one is an element of $\msl_{2n}$.
Let $\lambda=\sum_{k=1}^{n-1}m_k\omega_k$ be a dominant weight; in what follows we consider $\la$ as both
$\msl_n$ and $\msl_{2n}$ weight. To distinguish these cases we write $\overline{\la}$ in the $\msl_{2n}$ case.

Let $v$ be a cyclic vector of the $\msl_{2n}\T \mathbb{K}[t]$ Weyl module $W(\overline{\la})$.
The following proposition can be extracted from the results of \cite{CL}, but for the readers convenience
we give a short proof below.
\begin{prop}
The $\msl_n\T \mathbb{K}[t]$-module  $U(\msl_n)v\subset W(\overline{\la})$ is isomorphic to the Weyl module $W(\lambda)$.
It is equal
to the set of elements of $W(\overline{\la})$ with weights in $\mathbb{Z}\langle \epsilon_1, \dots, \epsilon_n \rangle$.
\end{prop}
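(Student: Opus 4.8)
The plan is to prove the two assertions in order: first that $U(\msl_n)v$ is the weight-$\mathbb{Z}\langle\epsilon_1,\dots,\epsilon_n\rangle$ part of $W(\overline{\la})$, and then that this submodule satisfies exactly the defining relations of the $\msl_n\T\mathbb{K}[t]$ Weyl module $W(\lambda)$, together with a character count that forces the surjection to be an isomorphism.

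For the weight statement, I would argue that $U(\msl_n\T\mathbb{K}[t])v$ is spanned by monomials $f_{\gamma_1}\T t^{l_1}\dots f_{\gamma_r}\T t^{l_r}v$ where each $f_{\gamma_i}$ lies in the lower-triangular part of $\msl_n\subset\msl_{2n}$, i.e.\ each $\gamma_i\in\mathbb{Z}\langle\epsilon_1-\epsilon_2,\dots,\epsilon_{n-1}-\epsilon_n\rangle$. Since $\overline{\la}$ is supported on $\epsilon_1,\dots,\epsilon_n$ (as $\la=\sum m_k\omega_k$ involves only the first $n-1$ fundamental weights), the cyclic vector $v$ has weight in $\mathbb{Z}\langle\epsilon_1,\dots,\epsilon_n\rangle$, and applying these $f_{\gamma_i}$ keeps us inside that sublattice; hence $U(\msl_n)v$ is contained in the prescribed weight space. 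For the reverse inclusion I would use that $W(\overline{\la})$ is cyclic over $\msl_{2n}\T\mathbb{K}[t]$ and that any weight of $W(\overline{\la})$ lying in $\mathbb{Z}\langle\epsilon_1,\dots,\epsilon_n\rangle$ is reached from $\overline{\la}$ only by subtracting roots that stay within the $\msl_n$ root lattice: any $f_{pq}\T t^l$ with $q>n$ would introduce a negative $\epsilon_q$-component that cannot be cancelled by the remaining generators while staying in the lattice. Thus the raising/lowering operators involving indices beyond $n$ act trivially on this weight space, and it is exactly $U(\msl_n)v$.

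Next I would check that $U(\msl_n)v$, as an $\msl_n\T\mathbb{K}[t]$-module, is a quotient of $W(\lambda)$. The vector $v$ is annihilated by $e_\alpha\T t^k$ for all positive roots $\alpha$ of $\msl_n$ and all $k\ge 0$ (these are a subset of the raising operators killing the highest-weight vector of $W(\overline{\la})$), so relation \eqref{weylvanishing1} holds. The finiteness relation \eqref{weylbound1}, namely $(f_{-\alpha}\T 1)^{\langle\alpha^\vee,\lambda\rangle+1}v=0$ for $\msl_n$-roots $\alpha$, holds because $\langle\alpha^\vee,\overline{\la}\rangle=\langle\alpha^\vee,\lambda\rangle$ for these roots and $v$ satisfies the corresponding $\msl_{2n}$ relation. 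Hence $v$ generates a quotient of $W(\lambda)$, giving a surjection $W(\lambda)\twoheadrightarrow U(\msl_n)v$.

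Finally I would upgrade the surjection to an isomorphism via characters. The main obstacle, and the step I expect to require real work, is the reverse character inequality: I must show $\ch U(\msl_n)v\ge\ch W(\lambda)$, so that combined with the surjection the two characters agree and the surjection is injective. Here I would invoke the fusion-product description of $W(\lambda)$ from Proposition \ref{fusionproduct} and the grading-comparison mechanism of Lemma \ref{inverceineqchar}: realizing $W(\overline{\la})$ as the associated graded of a tensor product of evaluation modules and restricting to $\msl_n$ should exhibit enough linearly independent vectors in $U(\msl_n)v$ to match the dimensions of $W(\lambda)$ in each weight. The delicate point is confirming that the restriction of the $\msl_{2n}$ fusion filtration induces a filtration on $U(\msl_n)v$ whose graded pieces are no smaller than those of the $\msl_n$ fusion product; this is precisely the sort of comparison that the paper sets up through its evaluation-module machinery, and I expect the argument to lean on \cite{CL} for the identification of the restricted module.
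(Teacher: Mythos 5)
Your overall strategy --- identify $U(\msl_n)v$ with the weight-lattice part, show it is a quotient of $W(\la)$, then match sizes --- is exactly the paper's strategy, and your first two steps (the lattice identification via the observation that monomials containing some $f_{pq}\T t^l$ with $q>n$ cannot return to $\mathbb{Z}\langle \epsilon_1,\dots,\epsilon_n\rangle$, and the verification of relations \eqref{weylvanishing1}, \eqref{weylbound1} for $v$) are correct. The genuine gap is in your third step, which you leave as a plan rather than a proof: you never establish the reverse size estimate, and the route you sketch --- a graded comparison of fusion filtrations through Lemma \ref{inverceineqchar}, with graded pieces ``no smaller than'' those of the $\msl_n$ fusion product --- is both unnecessary and not carried out. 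Since $W(\la)$ is finite dimensional and you already have a surjection $W(\la)\twoheadrightarrow U(\msl_n)v$ of cyclic modules, an \emph{ungraded} equality of dimensions suffices; no character inequality in either direction is needed.

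The missing observation that closes the argument is this. By Proposition \ref{fusionproduct} applied to $\msl_{2n}$, $W(\overline{\la})$ is the associated graded of $\bigotimes_{k,i}V(\omega_k)_{\zeta_{k,i}}$ with respect to a filtration by $\fh$-stable subspaces, so each weight space of $W(\overline{\la})$ has the same dimension as the corresponding weight space of the tensor product. A basis vector $v_{J_{1,1}}\T\cdots\T v_{J_{n-1,m_{n-1}}}$ of that tensor product has weight $\sum_{k,i}\sum_{j\in J_{k,i}}\epsilon_j$ with all coefficients nonnegative, so its weight lies in $\mathbb{Z}\langle\epsilon_1,\dots,\epsilon_n\rangle$ precisely when every $J_{k,i}\subset\{1,\dots,n\}$; hence the lattice part of the tensor product is literally $\bigotimes_{k,i}\Lambda^k\mathbb{K}^n$ with the same evaluation parameters, i.e.\ the fusion tensor product for $\msl_n$. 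Applying Proposition \ref{fusionproduct} once more, now to $\msl_n$, its dimension equals $\dim W(\la)$. Combined with your surjection and your identification of $U(\msl_n)v$ with the lattice part of $W(\overline{\la})$, this forces the surjection to be an isomorphism. (A small sign slip along the way: $f_{pq}$ has weight $\epsilon_q-\epsilon_p$, so a factor with $q>n$ contributes a \emph{positive} $\epsilon_q$-component; your non-cancellation claim still holds because subsequent lowering operators can only push that excess to higher indices, never remove it.)
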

\begin{proof}
Note that all relations of the Weyl module $W(\lambda)$ hold in $U(\msl_n)v$, so we only need to compare the dimensions.
The  Weyl module $W(\overline{\la})$ is generated from $v$ with the action of the operators $f_{pq}\T t^l$.
We note that a vector
$\{\prod_{l=1}^d f_{i_lj_l}\otimes t^{x_l}.v\}$ with some $j_a > n$ never has a weight in
$\mathbb{Z}\langle \epsilon_1, \dots, \epsilon_n \rangle$.
However the dimension of the space of  elements of weight in $\mathbb{Z}\langle \epsilon_1, \dots, \epsilon_n \rangle$
is exactly equal to $\dim W(\lambda)$
because of Proposition \ref{fusionproduct}.
\end{proof}

\subsection{Combinatorial construction}
For an $\msl_{2n}$-weight $\lambda=\sum_{k=1}^{n-1} m_k \omega_k$ we consider a tuple of pairwise distinct elements
$(\zeta_{k,i})$, $k=1, \dots, n-1$, $1 \leq i \leq m_k$ and the tensor product:
\[V(\lambda)_{(\zeta_{k,i})}=\bigotimes_{k=1}^{n-1}\bigotimes_{i=1}^{m_k}V(\omega_k)_{\zeta_{k,i}}.\]
We construct a basis of this module.

Let $\mathbb{B}$ be the set consisting of collections $(\mathcal{B}_{k,i})$ labeled by pairs $k=1, \dots, n-1$,
$1 \leq i \leq m_k$,
where each $\mathcal{B}_{k,i}$ is the set consisting of elements $f_{pq}$, $1\le p<q\le n$ satisfying two following conditions:

${\bf F1}.$ If $f_{pq}\in \mathcal{B}_{k,i}$, then $p \leq k<q$.

${\bf F2}.$ If $f_{p_1q_1}\in \mathcal{B}_{k,i}$ and $f_{p_2q_2} \in \mathcal{B}_{k,i}$,
then either $p_1<p_2$ and $q_2<q_1$ or $p_1>p_2$ and $q_2>q_1$.

\begin{example}
For $n=5$ the set $\mathbb{B}$ contains the element $B=(\mathcal{B}_{2,1},\mathcal{B}_{3,1})$ defined by
$\mathcal{B}_{2,1}=\{f_{1,9}, f_{2,8}\}$, $\mathcal{B}_{3,1}=\{f_{1,10}, f_{2,7}, f_{3,6}\}$.
\end{example}
\begin{lem}\label{oneplacefilling}
For any pair $k,i$ the sets $\mathcal{B}_{k,i}$ satisfying $F1$ and $F2$ are in natural bijection with certain basis of $V(\omega_k)$.
\end{lem}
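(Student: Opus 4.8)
The plan is to exhibit the bijection explicitly and then match cardinalities. First I observe that a set $\mathcal{B}_{k,i}$ obeying \textbf{F1} and \textbf{F2} is completely encoded by a pair of subsets: let $S=\{p : f_{pq}\in\mathcal{B}_{k,i}\}\subseteq\{1,\dots,k\}$ collect the column indices and $T=\{q : f_{pq}\in\mathcal{B}_{k,i}\}\subseteq\{k+1,\dots,2n\}$ collect the row indices. Condition \textbf{F1} guarantees $S$ and $T$ lie in these ranges, and condition \textbf{F2} forces the matching to be the unique order-reversing bijection between $S$ and $T$: writing $S=\{p_1<\dots<p_r\}$ and $T=\{q_1>\dots>q_r\}$, the element $p_a$ must be paired with $q_a$. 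In particular $|S|=|T|=:r$, and conversely any pair $(S,T)$ of equal-cardinality subsets in these ranges yields exactly one admissible $\mathcal{B}_{k,i}$. Thus the admissible sets are in bijection with such pairs $(S,T)$.

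Next I note that the matrix units appearing in $\mathcal{B}_{k,i}$ pairwise commute in $U(\msl_{2n})$: since every row index exceeds $k$ while every column index is at most $k$, the relevant indices are disjoint and the commutator $[f_{p_aq_a},f_{p_bq_b}]$ vanishes. Hence the product $\prod_{f_{pq}\in\mathcal{B}_{k,i}}f_{pq}$ is a well-defined element of $U(\msl_{2n})$ independent of the chosen ordering. Applying it to the highest weight vector $v_{\{1,\dots,k\}}$ of $V(\omega_k)$, each factor $f_{p_aq_a}$ replaces the entry $v_{p_a}$ by $v_{q_a}$; because the indices $p_a$ and $q_a$ are all distinct, the result is, up to a sign coming from reordering the wedge, the standard basis vector $v_J$ with
\[
J=\bigl(\{1,\dots,k\}\setminus S\bigr)\sqcup T.
\]
This defines the desired map $\Phi$ from admissible sets $\mathcal{B}_{k,i}$ to the standard basis $\{v_J : J\subset\{1,\dots,2n\},\ |J|=k\}$.

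It remains to check that $\Phi$ is a bijection onto a basis. Injectivity is immediate: from $J$ one recovers $T=J\cap\{k+1,\dots,2n\}$ and $S=\{1,\dots,k\}\setminus J$, and hence $\mathcal{B}_{k,i}$. For surjectivity and the count, an arbitrary $J=(j_1<\dots<j_k)$ produces via these formulas subsets $S\subseteq\{1,\dots,k\}$ and $T\subseteq\{k+1,\dots,2n\}$ with $|S|=|T|$ (forced by $|J|=k$), which assemble into a unique admissible $\mathcal{B}_{k,i}$; equivalently, a Vandermonde summation gives
\[
\#\{\mathcal{B}_{k,i}\}=\sum_{r\ge 0}\binom{k}{r}\binom{2n-k}{r}=\binom{2n}{k}=\dim V(\omega_k),
\]
so the image of $\Phi$ is all of the standard basis. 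Therefore $\Phi$ is the sought natural bijection. The only point requiring care is the sign bookkeeping in the wedge product, but since the operators commute and act on pairwise disjoint slots this introduces no ambiguity beyond an overall sign, which does not affect the identification of basis vectors.
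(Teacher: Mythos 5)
Your proof is correct and follows essentially the same route as the paper: both identify an admissible $\mathcal{B}_{k,i}$ with the standard wedge basis vector $v_J$, $J=(\{1,\dots,k\}\setminus S)\sqcup T$, obtained by applying the commuting product $\prod f_{pq}$ to the highest weight vector. The only difference is that the paper simply cites \cite{FFL1} for the fact that these vectors form a basis, whereas you verify the bijection with the full wedge basis directly (via the $(S,T)$ encoding and the Vandermonde count), which makes the argument self-contained.
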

\begin{proof}
The basis of $V(\omega_k)$ we need consists of vectors
\[
\prod_{j=1}^l f_{p_jq_j}v=v_{\{1, \dots,k\}\backslash\{p_1,\dots, p_l\}\cup\{q_1,\dots, q_l\}},
\]
where $p_j>p_{j'}$, $q_j<q_{j'}$ for $j<j'$ (see e.g. \cite{FFL1}).
\end{proof}

 For a pair $(k,i)$ let $v_{k,i}$ be a highest weight vector of the module $V(\omega_k)_{\zeta_{k,i}}$.

\begin{cor}
We have a bijection $\mathcal{P}$ between the set $\mathbb{B}$ and the basis of $V(\lambda)_{(\zeta_{k,i})}$ defined by the formula
\begin{multline*}\mathcal{P} \left((\mathcal{B}_{k,i})\right)=
\left(\prod_{f_{p,q}\in \mathcal{B}_{1,1}} f_{pq}\right)v_{1,1}\otimes
\left(\prod_{f_{p,q}\in \mathcal{B}_{1,2}} f_{pq}\right)v_{1,2}\otimes \dots\\ \otimes
\left(\prod_{f_{p,q}\in \mathcal{B}_{n-1,m_{n-1}}} f_{pq}\right)v_{n-1,m_{n-1}} .\end{multline*}
\end{cor}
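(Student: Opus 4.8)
The plan is to show that the map $\mathcal{P}$ is well-defined and bijective by combining the single-factor bijection from Lemma \ref{oneplacefilling} with the cyclicity result of Proposition \ref{fusionproduct}. First I would observe that for each fixed pair $(k,i)$, Lemma \ref{oneplacefilling} already identifies the sets $\mathcal{B}_{k,i}$ satisfying conditions $F1$ and $F2$ with a basis of the single tensor factor $V(\omega_k)_{\zeta_{k,i}}$, namely the basis of vectors $\prod_j f_{p_jq_j}v_{k,i}$ with the nesting condition on the indices. Taking the tensor product of these bases over all pairs $(k,i)$ produces a basis of the tensor product $V(\lambda)_{(\zeta_{k,i})}$, since a tensor product of bases is a basis. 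The formula for $\mathcal{P}$ is precisely this: it sends a collection $(\mathcal{B}_{k,i})$ to the tensor product of the corresponding single-factor basis vectors.

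The essentially only thing to verify is that the product $\prod_{f_{p,q}\in\mathcal{B}_{k,i}}f_{pq}$ appearing in each factor is unambiguously defined and reproduces the basis vector from Lemma \ref{oneplacefilling}. The point is that condition $F2$ guarantees that within a single set $\mathcal{B}_{k,i}$ the elements $f_{pq}$ are totally ordered by the nesting relation (increasing $p$ forces decreasing $q$ and vice versa), so there is a canonical ordering of the factors matching the condition $p_j>p_{j'}$, $q_j<q_{j'}$ for $j<j'$ used in Lemma \ref{oneplacefilling}. Moreover, since all the matrix units $f_{pq}$ with $p\le k<q$ lower the weight into distinct root directions and raise distinct vectors $v_p\mapsto v_q$, the operators appearing in $\mathcal{B}_{k,i}$ pairwise commute when acting on $v_{k,i}$ (they involve disjoint rows $p$ and disjoint columns $q$ by $F2$), so the product is well-defined independently of the chosen order. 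Thus each factor lands on exactly the basis vector $v_{\{1,\dots,k\}\setminus\{p_1,\dots,p_l\}\cup\{q_1,\dots,q_l\}}$ of Lemma \ref{oneplacefilling}.

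I expect the only genuine subtlety, and hence the main point to get right, to be the commutativity and order-independence of the product within each $\mathcal{B}_{k,i}$: one must check that condition $F2$ precisely encodes the index pattern $p_j>p_{j'}$, $q_j<q_{j'}$ required for the vectors to form the known basis of $V(\omega_k)$, and that no cancellation or reordering ambiguity arises. Once this is settled, the bijectivity of $\mathcal{P}$ is immediate: it is the tensor product of the bijections supplied by Lemma \ref{oneplacefilling}, so it carries $\mathbb{B}=\prod_{k,i}\{\mathcal{B}_{k,i}\}$ bijectively onto the corresponding product of factor-bases, which is a basis of $V(\lambda)_{(\zeta_{k,i})}$.
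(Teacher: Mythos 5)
Your proposal is correct and follows essentially the same route as the paper's (largely implicit) argument: the corollary is just the tensor product of the per-factor bijections supplied by Lemma \ref{oneplacefilling}, plus the observation that the product $\prod_{f_{pq}\in\mathcal{B}_{k,i}}f_{pq}$ is unambiguous. One small correction: the commutativity of the operators within a single $\mathcal{B}_{k,i}$ is a consequence of condition $F1$ rather than $F2$ — since every factor satisfies $p\le k<q$, no index $q$ of one factor can coincide with an index $p$ of another, which is exactly what kills the brackets $[f_{p_1q_1},f_{p_2q_2}]$; mere distinctness of the rows among themselves and of the columns among themselves (as in $F2$) would not rule out a pair like $f_{p_1q_1},f_{q_1q_2}$.
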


Note that all $f_{p,q}$ showing up in $\mathcal{B}_{k,i}$ commute because of condition $F1$ Therefore this basis is well defined.
To a set $\mathcal{B}_{k,i}$ we attach the set $J_{k,i}\subset \{1,\dots,2n\}$ defined by
\begin{equation}\label{BJ}
\prod_{f_{pq}\in\mathcal{B}_{k,i}} f_{pq}v_{\omega_k}= v_{J_{k,i}}=\pm \bigwedge_{j\in J_{k,i}} v_j.
\end{equation}
Then
\[
\mathcal{P} \left((\mathcal{B}_{k,i})\right)=\pm \bigotimes_{\substack{1\le k\le n-1\\ 1\le i\le m_k}} v_{J_{k,i}}.
\]



We consider the following 
order on the elements $f_{pq}$, $1\le p<q\le 2n$:
\begin{equation}\label{forder}
f_{12}<f_{13}<f_{23}<f_{14}<f_{24}<f_{34}<f_{15}< \dots
\end{equation}
We fix an element $B=(\mathcal{B}_{k,i})\in \mathbb{B}$.
For any element $f_{pq}\in \mathcal{B}_{k,i}$  we attach a degree in the following way.

First, we consider the restriction of $B$ to
elements less than $f_{pq}$, i.e $B'=(\mathcal{B}'_{k,i})$, where
$\mathcal{B}'_{k,i}=\mathcal{B}_{k,i}\cap\{f_{ab}, f_{ab}<f_{pq}\}$.

Second, we consider all pairs $(k,i)$ such that $\mathcal{B}'_{k,i}\cup \{f_{pq}\}$ satisfies conditions $F1$ and $F2$.
We call these pairs $(p,q)$-admissible.

Third, we define the following order on $(p,q)$-admissible pairs.  Let $(k_1,i_1)$ and $(k_2,i_2)$ be two $(p,q)$-admissible pairs. Let
\[
\mathcal{B}'_{k_1,i_1}=\{f_{p_{1,j}q_{1,j}}, 1 \leq j \leq l_1\},\
\mathcal{B}'_{k_2,i_2}=\{f_{p_{2,j}q_{2,j}}, 1 \leq j \leq l_2\}.
\]
If $k_1-l_1<k_2-l_2$, then $(k_1,i_1)<(k_2,i_2)$.
Assume that $k_1-l_1=k_2-l_2$. We consider vectors
\[\bar q_1=(q_{1,l_1},\dots,q_{1,1}) \text{ and } \bar q_2=(q_{2,l_2},\dots,q_{2,1}).
\]
If $\bar q_1 <\bar q_2$ in lexicographic order (comparing from left to right, i.e. we first compare
$q_{1,l_1}$ with $q_{2,l_2}$, then $q_{1,l_1-1}$ with $q_{2,l_2-1}$, etc.), then $(k_1,i_1)<(k_2,i_2)$.
Finally if $k_1-l_1=k_2-l_2$, $\bar q_1 =\bar q_2$ (therefore $k_1=k_2$) and $i_1<i_2$, then
$(k_1,i_1)<(k_2,i_2)$.

Fourth, we consider all $(p,q)$-admissible pairs with the above order. For an element
$f_{p,q}\in \mathcal{B}_{k,i}$
we attach $t$-degree $d(p,q,k,i)$ according to  the following definition.

\begin{dfn}\label{dpqki}
$d(p,q,k,i)$ is equal to the number of all $(p,q)$-admissible
pairs $(k',i')$ such that $(k',i')<(k,i)$ and $f_{pq}\notin \mathcal{B}_{k',i'}$.
\end{dfn}

Finally, we define the following element of the universal enveloping algebra $U(\msl_{2n}\otimes \mathbb{K}[t])$:
\begin{equation}
\Pi(B)=\prod_{f_{pq}\in\mathcal{B}_{k,i}} f_{pq}\otimes t^{d(p,q,k,i)}.
\end{equation}
The product in the formula above is taken in such a way that smaller $f_{pq}$ are applied first
(i.e. the smaller $f_{pq}$ show up on the right).

\begin{dfn}
We define $\mathcal{F}(B)\in V(\lambda)_{(\zeta_{k,i})}$ as $\Pi(B)v$.
\end{dfn}

\subsection{Basis}
Let us write a weight over $\msl_{2n}$ as a linear combination of $\epsilon_i$, $i=1, \dots, 2n$; a root vector $f_{pq}$
has the weight $\epsilon_q-\epsilon_p$ and the highest weight of a fundamental representation $V(\omega_k)$
is equal to $\sum_{i=1}^k\epsilon_i$.

Let $\mathbb{B}_{\mu}$ be the set of elements $B \in \mathbb{B}$ such that the weight of $\mathcal{F}(B)$ is equal to $\mu$
(equivalently the weight of $\mathcal{P}(B)$ is equal to $\mu$).

\begin{rem}\label{notall}
In what follows we only consider weights $\mu\in \bZ_{\ge 0}\langle \epsilon_{n+1}, \dots, \epsilon_{2n} \rangle$.
For such a weight let $B$ be an element in $\mathbb{B}_{\mu}$. Then conditions $F1$ and $F2$ imply that for each
pair $(k,i)$ the set $\mathcal{B}_{k,i}$
consists of elements $f_{p,q}$ satisfying the following conditions: $1\le p<n<q\le 2n$ and $p+q\le 2n+1$.
\end{rem}

\begin{dfn}
A subset $J\subset\{1,\dots,2n\}$ is called dense if $J\cap \{1,\dots,n\}=\{1,\dots,a\}$ for some $a$.
An element $B \in \mathbb{B}$ is called dense, if all entries $\mathcal{B}_{k,i}$ correspond to dense sets $J_{k,i}$ in the
sense of \eqref{BJ}. A vector $\mathcal{P}(B)$ is called dense if $B$ is dense.
\end{dfn}
\begin{rem}\label{densecond}
We note that $B$ is not dense if and only if there exist $a,b,k,i$, $a<k$ such that
$f_{ab}\in {\mathcal{B}}_{k,i}$ and there is no element $c$ such that
$f_{a+1,c}\in {\mathcal{B}}_{k,i}$.
\end{rem}

Now let us take any $f_{pq}$ and put $\mathcal{B}_{k,i}'=\mathcal{B}_{k,i}\cap \{f_{ab}, f_{ab}<f_{pq}\}$,
$B'=(\mathcal{B}_{k,i}')$.
We call $B'$ a strict $(p,q)$-restriction of $B$. In the following lemma we fix $p,q$ with $1\le p<n<q\le 2n$.

\begin{lem}\label{dense}
For a $\msl_{2n}$-weight $\mu\in \bZ_{\ge 0}\langle \epsilon_{n+1}, \dots, \epsilon_{2n} \rangle$ and
any $B \in \mathbb{B}_\mu$ its strict $(p,q)$-restriction $B'$ is dense.
\end{lem}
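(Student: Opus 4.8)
### Plan of the proof

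The plan is to prove Lemma \ref{dense} by contradiction, using the combinatorial characterization of non-density stated in Remark \ref{densecond}. Suppose the strict $(p,q)$-restriction $B'=(\mathcal{B}'_{k,i})$ is not dense. By Remark \ref{densecond}, there exist indices $a,b,k,i$ with $a<k$ such that $f_{ab}\in\mathcal{B}'_{k,i}$ but no element of the form $f_{a+1,c}$ lies in $\mathcal{B}'_{k,i}$. My first step is to exploit the nesting condition $F2$ together with the constraint $1\le p'<n<q'\le 2n$, $p'+q'\le 2n+1$ from Remark \ref{notall} (which applies since $\mu\in\bZ_{\ge 0}\langle\epsilon_{n+1},\dots,\epsilon_{2n}\rangle$). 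Because the elements of $\mathcal{B}'_{k,i}$ are strictly nested, the presence of $f_{ab}$ with $a<k$ forces, via $F1$, some row index larger than $a$ to appear; the "missing" $f_{a+1,c}$ will be shown to contradict either the nesting or the admissibility of the ordering used to build $B'$.

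The key technical step is to compare $B'$ with the full $B\in\mathbb{B}_\mu$. Since $B$ itself satisfies $F1$ and $F2$ (it lies in $\mathbb{B}$), and since $B\in\mathbb{B}_\mu$ with $\mu$ supported on $\epsilon_{n+1},\dots,\epsilon_{2n}$, the full set $\mathcal{B}_{k,i}$ must contain enough root vectors to realize every row $1,\dots,k'$ for the appropriate $k'$; in particular the element $f_{a+1,c}$ that is absent from the restriction $\mathcal{B}'_{k,i}$ must nonetheless appear in $\mathcal{B}_{k,i}$, but with $f_{a+1,c}\ge f_{pq}$ in the order \eqref{forder}. Thus the second step is to show that the ordering $f_{ab}<f_{pq}\le f_{a+1,c}$ together with the nesting inequalities of $F2$ (namely $a<a+1$ forces $c<b$) produces a violation: I will combine the explicit description of the order \eqref{forder}, which orders $f_{pq}$ primarily by the larger index $q$, with the bound $p+q\le 2n+1$ to derive $f_{a+1,c}<f_{ab}<f_{pq}$, contradicting that $f_{a+1,c}$ was cut off by the strict $(p,q)$-restriction.

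I expect the main obstacle to be bookkeeping the interaction between the two orders: the total order \eqref{forder} on the $f_{pq}$ (which governs what the strict restriction keeps) versus the nesting partial order from $F2$ (which governs which pairs $(p,q)$ can coexist in a single $\mathcal{B}_{k,i}$). The crux is that nesting forces $c<b$ whenever $a<a+1$ appear together, and one must check that under the constraint $p+q\le 2n+1$ this inequality on the smaller indices propagates correctly through \eqref{forder}, whose tie-breaking is by the smaller index. The rest of the argument is routine: once the contradiction is located at the level of a single pair of root vectors inside one $\mathcal{B}_{k,i}$, it immediately shows that no missing $f_{a+1,c}$ can occur, so every strict $(p,q)$-restriction $B'$ must be dense.
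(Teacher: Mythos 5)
Your argument is correct and is precisely an unpacking of the paper's one-line proof (``follows from the explicit form of the order \eqref{forder} and property $F2$''): since $\mu$ is supported on $\epsilon_{n+1},\dots,\epsilon_{2n}$ every row $1,\dots,k$ occurs in $\mathcal{B}_{k,i}$, the nesting condition $F2$ forces $c<b$, and because \eqref{forder} compares the larger indices first this yields $f_{a+1,c}<f_{ab}<f_{pq}$, contradicting that $f_{a+1,c}$ was cut off by the strict $(p,q)$-restriction. The only superfluous ingredient is the bound $p+q\le 2n+1$, which is not needed for this comparison.
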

\begin{proof}
Follows from the explicit form of the order \eqref{forder} and property $F2$.
\end{proof}

Recall that we consider irreducible highest weight representations $V(\lambda)$ of $\msl_{2n}$ with
$\la=\sum_{k=1}^{n-1} m_k\omega_k$.

\begin{prop}\label{basisevaluation}
For an $\msl_{2n}$ weight $\mu\in \bZ_{\ge 0}\langle \epsilon_{n+1}, \dots, \epsilon_{2n} \rangle$
the elements $\{\mathcal{F}(B)\}$, $B \in  \mathbb{B}_{\mu}$ constitute a basis of
the $\mu$-weight space of
\[V(\lambda)_{(\zeta_{k,i})}=\bigotimes_{k=1}^{n-1}\bigotimes_{i=1}^{m_k}V(\omega_k)_{\zeta_{k,i}}.\]
\end{prop}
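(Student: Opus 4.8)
The plan is to prove Proposition \ref{basisevaluation} by establishing both that the vectors $\{\mathcal{F}(B)\}_{B\in\mathbb{B}_\mu}$ span the $\mu$-weight space and that their cardinality equals the dimension of that space, so that a spanning set of the correct size is automatically a basis. The dimension count is the easier half: since each $V(\omega_k)_{\zeta_{k,i}}$ has the basis described in Lemma \ref{oneplacefilling} indexed by sets $\mathcal{B}_{k,i}$ satisfying $F1$ and $F2$, the Corollary following that lemma shows $\mathcal{P}$ is a bijection between $\mathbb{B}$ and a basis of $V(\lambda)_{(\zeta_{k,i})}$; restricting to fixed weight $\mu$ shows $|\mathbb{B}_\mu|$ equals $\dim V(\lambda)_{(\zeta_{k,i})}(\mu)$. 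Since $\mathcal{F}(B)=\Pi(B)v$ has the same weight as $\mathcal{P}(B)$ (the $t$-degrees $d(p,q,k,i)$ do not affect the Cartan weight), the two index sets $\mathbb{B}_\mu$ coincide, so it suffices to prove that $\{\mathcal{F}(B)\}_{B\in\mathbb{B}_\mu}$ is linearly independent, or equivalently spans.

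The core of the argument is to set up a triangularity relation between the new vectors $\mathcal{F}(B)=\Pi(B)v$ and the known basis vectors $\mathcal{P}(B)$. The natural strategy is to expand $\Pi(B)v$ in the tensor-product basis $\{\mathcal{P}(B')\}$ and show that, with respect to a suitable order on $\mathbb{B}_\mu$, the expansion is unitriangular: $\mathcal{F}(B)=\pm\mathcal{P}(B)+\sum_{B'} c_{B'}\mathcal{P}(B')$ where every $B'$ appearing is strictly smaller. The key mechanism is that each factor $f_{pq}\otimes t^{d(p,q,k,i)}$ acts on the tensor product by the Leibniz/coproduct rule, distributing across the tensor factors with the evaluation parameters $\zeta_{k',i'}$ entering as $\zeta_{k',i'}^{\,d(p,q,k,i)}$. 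The precise definition of $d(p,q,k,i)$ in Definition \ref{dpqki}---counting $(p,q)$-admissible pairs strictly below $(k,i)$ that do not already contain $f_{pq}$---is engineered exactly so that when $f_{pq}$ lands on the ``intended'' tensor slot $(k,i)$ one recovers the diagonal term $\mathcal{P}(B)$, while any other placement either produces a strictly smaller element in the order or contributes with a controllable Vandermonde-type coefficient. Here Lemma \ref{dense} is the crucial input: it guarantees that the strict $(p,q)$-restriction $B'$ is dense, which is what forces the admissible placements of $f_{pq}$ into a configuration compatible with the ordering, and Remark \ref{densecond} gives the concrete obstruction to density that one rules out.

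I would organize the induction along the order \eqref{forder} on the generators $f_{pq}$, processing the factors of $\Pi(B)$ from smallest to largest (matching the order in which they are applied). At each stage one argues that applying $f_{pq}\otimes t^{d(p,q,k,i)}$ to the partially-built vector reproduces the target contribution in slot $(k,i)$ plus lower-order corrections; the admissibility ordering on pairs $(k_1,i_1)<(k_2,i_2)$ (comparing $k-l$, then the reversed column vectors $\bar q$ lexicographically, then $i$) is precisely the bookkeeping that makes the leading term unique. The distinctness of the parameters $\zeta_{k,i}$ ensures the relevant coefficient matrices (generalized Vandermonde determinants in the $\zeta$'s, with exponents given by the $d$-values) are nondegenerate, so the triangular system is invertible. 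Summing over all generators yields the claimed unitriangularity, and hence linear independence.

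The main obstacle I anticipate is verifying that the combinatorially-defined degrees $d(p,q,k,i)$ genuinely produce a \emph{strictly} triangular (as opposed to merely triangular-with-ties) system: one must check that whenever $f_{pq}$ is placed in a slot other than its intended one, either the resulting configuration violates $F1$/$F2$ (hence contributes nothing), or it yields a $B'$ that is strictly lower in the chosen monomial order, with no possibility of the diagonal entry being cancelled. This requires a careful case analysis using density (Lemma \ref{dense}), the constraint $p+q\le 2n+1$ from Remark \ref{notall}, and the interplay between the order \eqref{forder} on generators and the order on admissible pairs. Controlling the evaluation-parameter coefficients so that the nonzero diagonal survives---i.e.\ confirming the Vandermonde system has the degrees arranged to give a nonvanishing determinant---is the delicate point where the precise choice in Definition \ref{dpqki} must be used rather than any ad hoc degree assignment.
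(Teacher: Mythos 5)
Your overall framework (count dimensions via the bijection $\mathcal{P}$, then prove linear independence by expanding $\mathcal{F}(B)=\Pi(B)v$ in the basis $\mathcal{P}(\mathbb{B})$) matches the paper, and you correctly identify the roles of density, of the order on $(p,q)$-admissible pairs, and of the distinctness of the $\zeta_{k,i}$. But the central mechanism you propose --- a unitriangular expansion $\mathcal{F}(B)=\pm\mathcal{P}(B)+\sum_{B'\prec B}c_{B'}\mathcal{P}(B')$ with respect to some total order on $\mathbb{B}_\mu$ --- cannot work, and this is a genuine gap rather than a technicality. The obstruction is already visible in the smallest case: take two tensor slots $V(\omega)_{\zeta_1}\otimes V(\omega)_{\zeta_2}$ for $\msl_2$ and the weight space where a single $f$ has been applied. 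The two elements of $\mathbb{B}_\mu$ put $f$ in slot $1$ or slot $2$, with $t$-degrees $0$ and $1$ respectively, and the coproduct gives
\[
\begin{pmatrix} 1 & 1 \\ \zeta_1 & \zeta_2 \end{pmatrix}
\]
as the transition matrix from $\{\mathcal{F}(B)\}$ to $\{\mathcal{P}(B)\}$. All four entries are nonzero, so no ordering of the two basis elements makes this triangular; independence holds only because the Vandermonde determinant $\zeta_2-\zeta_1$ is nonzero. Your hedge that off-diagonal placements contribute ``with a controllable Vandermonde-type coefficient'' does not rescue triangularity: within a class of $B$'s sharing the same multiset of root vectors, distinct placements of $f_{pq}$ genuinely produce other elements of $\mathbb{B}_\mu$ with nonzero coefficients in both directions, so the ``diagonal survives'' framing is the wrong one.

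The paper's proof resolves this with a two-level structure that your proposal is missing. The coarse level is a filtration not by an order on $\mathbb{B}_\mu$ but by the \emph{multiset} $M(B)=\sqcup_{k,i}\mathcal{B}_{k,i}$: one shows $\mathcal{F}(B)\in\bigoplus_{M(\tilde B)\le M(B)}\mathbb{K}\mathcal{P}(\tilde B)$, and that non-dense summands get pushed into strictly smaller multiset classes when the remaining (larger) factors are applied (this is where Lemma \ref{dense} and Remark \ref{densecond} are actually used). The fine level then treats the full, non-triangular transition matrix $(a_{jl})$ within a fixed multiset class: its non-degeneracy is proved by induction on $f_{pq}$ in the order \eqref{forder}, where each step multiplies the matrix by block Vandermonde matrices $c_{u,u'}(l)$ coming from tensor products of $L$ copies of evaluation $\msl_2$-modules, giving $\det = \det(a_{jl})^L\prod_l\det(c_{u,u'}(l))\ne 0$. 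To repair your argument you would need to replace the claimed unitriangularity by exactly this kind of determinant factorization; as written, the proof would fail at the step ``any other placement produces a strictly smaller element.''
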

\begin{proof}
We divide the proof into four steps. In {\it Step 1} we consider the decomposition of an element $\mathcal{F}(B)$
in the basis $\mathcal{P}(\mathbb{B}_\mu)$. In {\it Step 2} we restrict to the "top" summands from the decomposition
of {\it Step 1}. In {\it Step 3} we further restrict to the dense summands of the "top" part from {\it Step 2}.
In {\it Step 4} we finalize the proof by proving the non-degeneracy of the transition matrix from the set
$\mathcal{F}(\mathbb{B}_\mu)$ to the dense terms from {\it Step 3}.

{\it Step 1}. Recall that we have order \eqref{forder} on elements $f_{pq}$.
We consider the lexicographic order on sequences $(f_{p_{j}q_{j}}, j=1, \dots, a)$ with
$f_{p_{j}q_{j}}\geq f_{p_{j+1}q_{j+1}}$. In other words, we first compare the largest elements
of two sequences, then the next to the largest elements, etc.
This order gives an order on multisets of elements $f_{pq}$.

Note that the weight of an element $\mathcal{P}\left((\mathcal{B}_{k,i})\right)$ depends only on the multiset
$M(B)=\sqcup_{k,i}\mathcal{B}_{k,i}$ (i.e. each appearance of an $f_{p,q}$ in a $\mathcal{B}_{k,i}$
increases the multiplicity of $f_{p,q}$ in $M(B)$). For $B \in \mathbb{B}_{\mu}$ with
$\mu\in \bZ_{\ge 0}\langle \epsilon_{n+1}, \dots, \epsilon_{2n} \rangle$, the multiset $M(B)$ consists
of elements $f_{pq}$ with the property $p<n<q$ (recall that $\la\in \bZ_{\ge 0}\langle \epsilon_1,\dots,\epsilon_n\rangle$).
We denote the weight of $\mathcal{P}\left((\mathcal{B}_{k,i})\right)$
by $wt(M(B))$. Note that if $wt(M(B))\in \bZ_{\ge 0}\langle \epsilon_{n+1}, \dots, \ \epsilon_{2n}\rangle$, then
$|M(B)|=\sum_{k=1}^{n-1} km_k$.

We first claim that $\mathcal{F}(B)\in \bigoplus_{M(\tilde B)\leq M(B)}\mathbb{K}\mathcal{P}(\tilde B)$.
Indeed, take any summand of $\mathcal{F}(B)$ in the basis $\mathcal{P}(\mathbb{B})$. It is of the form
\[
\bigotimes_{\substack{1\leq k \leq n-1\\ 1 \leq i \leq m_k}}\prod_{j=1}^{l_{i,k}}f_{p_{j,i,k}q_{j,i,k}}v_{\omega_k}=
\mathcal{P}(\tilde B)
\]
 and the multiset $\{f_{p_{j,i,k}}\}$
is equal to $M(B)$. Assume that
\[
\prod_{j=1}^{l_{i,k}}f_{p_{j,i,k}q_{j,i,k}}v_{\omega_k}=\pm\prod_{j=1}^{l_{i,k}}f_{p'_{j,i,k}q'_{j,i,k}}v_{\omega_k}
\]
and the set $\{f_{p'_{j,i,k}q'_{j,i,k}}, j=1, \dots, l_{i,k}\}$ satisfies condition $F2$. Then it is easy to see that
$\{f_{p_{j,i,k}q_{j,i,k}}, j=1, \dots, l_{i,k}\}\geq\{f_{p'_{j,i,k}q'_{j,i,k}}, j=1, \dots, l_{i,k}\}$.
Therefore $M(\tilde B)<M(B)$.

{\it Step 2}.
Let us fix a multiset $M$. We prove that the classes of elements $\mathcal{F}(B)$, $M(B)=M$, form a basis of
$\bigoplus_{M(B)\leq M}\mathbb{K}\mathcal{P}(B)/\bigoplus_{M(B)< M}\mathbb{K}\mathcal{P}(B)$.

We fix a pair $(p,q)$ and consider the set $\mathcal{B}_{k,i}'=\mathcal{B}_{k,i}\cap \{f_{ab}, f_{ab}<f_{pq}\}$, $B'=(\mathcal{B}_{k,i}')$.
Consider the decomposition of $\mathcal{F}(B')$ in the basis $\mathcal{P}(\mathbb{B})$. Take a non-dense summand of this
decomposition of
the form $\mathcal{P}(\widetilde B)$, $\widetilde B=(\widetilde{\mathcal{B}}_{k,i})$
(see Definition \ref{dense} and Remark \ref{densecond}).
Then we have:
\begin{equation}\label{M(B)}
\left(\prod_{\substack{f_{p'q'}\in\mathcal{B}_{k,i}\\ f_{p'q'} \geq f_{pq}}}
f_{p'q'}\otimes t^{d(p,q,k,i)}\right)\mathcal{P}(\widetilde{B})
\in \bigoplus_{\substack{M(B)< M\\ B\in \mathbb{B}_\mu}}\mathbb{K}\mathcal{P}(B).
\end{equation}
Indeed, any nonzero summand of the decomposition of the left hand side of \eqref{M(B)} in the basis $\mathcal{P}(\mathbb{B})$
has $(k,i)$-th tensor factor of the form $f_{ab}f_{a+1,b'}\prod_{xy}{f_{xy}}v_{\omega_k}$, $b'>b$
(since $f_{a+1,b'}>f_{a,b}$). Therefore the set $\{f_{ab}f_{a+1,b'}\}\cup \{f_{xy}\}$  does not satisfy
condition $F2$, which implies $M(B)<M$.

{\it Step 3.}
Recall that we have fixed a multiset $M$ and a pair of indices $p,q$ with $1\le p<n<q\le 2n$.
Let $\mathbb{B}_\mu^{<pq}$ be the set of strict $(p,q)$-restrictions of the elements $B\in \mathbb{B}_\mu$ with $M(B)=M$.
We denote the elements of $\mathbb{B}_\mu^{<pq}$ by $B'_1,\dots,B'_g$, so there exist elements
$B_1,\dots,B_g\in \mathbb{B}_\mu$, $M(B_j)=M$ such that the strict $(p,q)$-restriction of $B_j$ is $B_j'$.
For an element $B'_j$, $1\le j\le g$ we consider the decomposition $\mathcal{F}(B'_j)=X+Y$ in the basis $\mathcal{P}(\mathbb{B})$,
where $X$ is the linear combination of non-dense summands and $Y$ is the linear combination of dense summands.
We claim that
\begin{equation}\label{Y}
Y=\sum_{l=1}^g a_{jl}\mathcal{P}(B'_l)
\end{equation}
and the matrix $(a_{jl})$ is non-degenerate.
This  claim would imply Proposition \ref{basisevaluation} thanks to Step 2 above.
We note that Proposition \ref{basisevaluation} follows from the non-degeneracy of the unrestricted matrix $(a_{jl})$.
We introduce the $pq$-restriction in order to be able to use the induction on $f_{pq}$.

We first explain the existence of the decomposition \eqref{Y}.
Let $(B_j)_{k,i}=(B_j')_{k,i}\sqcup (\bar B_j)_{k,i}$, so $(\bar B_j)_{k,i}$ consists of $f_{a,b}$ such that
$f_{a,b}\ge f_{p,q}$.
Let $\mathcal{P}(B')$ be a dense summand of $Y$.
It suffices to show that there exists a permutation $\sigma$ of the set $\{(k,i): 1\le k\le n-1, 1\le i\le m_k\}$
such that for each pair $k,i$ the set $B'_{k,i}\sqcup (B_j)_{\sigma(k,i)}$ satisfies conditions $F1$ and $F2$.
Such a $\sigma$ is constructed as follows: let $a=1,\dots,n-1$ be the minimal number such that $f_{a,\bullet}\in B'_{k,i}$.
We fix a pair $k',i'$ such that $a$ is the minimal number with $f_{a,\bullet}\in (B_j)_{k',i'}$. Then
$\sigma(k,i)=(k',i')$. We note that such a pair $(k',i')$ may not be unique, however, there is one for each
$(k,i)$ with $a$ as above.

{\it Step 4}. In the rest of the proof we show by induction on $(p,q)$ that the matrix $(a_{jl})$ is non-degenerate.
The base of induction $(p,q)=(1,n+1)$ is trivial.
Assume that $(a_{jl})$ is non-degenerate for the strict $(p,q)$-restriction.

Several observations are in order.
First, for a (dense) summand $\mathcal{P}(B_l')$, $B_l'=((\mathcal{B}_l)_{k,i}')$ there are exactly
\[
L=|\{(k,i),f_{p+1,q'}\in \mathcal{B}_{k,i},q'<q\}|-|\{(k,i),f_{p,\tilde q}\in \mathcal{B}_{k,i},\tilde q<q\}|
\]
pairs $k,i$ such that $(\mathcal{B}'_l)_{k,i}\cup\{f_{pq}\}$ is still dense.
It is important to note that $L$ does not change when we vary $l$, i.e. $L$ is completely determined by $M$.

Second, the same number $L$ is the cardinality of the set $S_l$ of pairs $(k,i)$ such that one can add $f_{p,q}$
to $(\mathcal{B}'_l)_{k,i}$ with $(k,i)$ from $S_l$
in such a way that the result is still a restriction of an element $B\in \mathbb{B}_\mu$ with $M(B)=M$.
Moreover, the set $S_l$ coincides with the set of pairs $(k,i)$ from the first observation above.

Third, we note that all pairs $(k,i)\in S_l$ are $(p,q)$-admissible. We observe that if $(k,i)\in S_l$ and
$(k',i')\notin S_l$ are two $(p,q)$-admissible positions, then $(k,i)<(k',i')$ (since $(k',i')\notin S_l$
means that the difference between $k'$ and the cardinality of $(\mathcal{B}'_l)_{k',i'}$ is greater than $p$,
while the same difference for the pair $(k,i)$ is equal to $p$).

Fourth, we fix $l=1,\dots,g$ and consider the tensor product of $L$ copies of evaluation fundamental $\msl_2$ modules
$\bigotimes_{(k,i)\in S(l)} V(\omega)_{\zeta_{k,i}}$.
The role of $f\in\msl_2$ is played by the element $f_{p,q}$. The set $\mathbb{B}$ in this case has $2^L$ elements
(recall that $L=|S_l|$) and the transition matrix $c_{u,u'}(l)$ from $\mathcal{F}(\mathbb{B})$ to $\mathcal{P}(\mathbb{B})$
is non-degenerate. We conclude that the transition matrix from the $\mathcal{F}$-basis to the $\mathcal{P}$-basis after
$f_{p,q}$ is added (i.e. for $(p+1,q)$-restriction if $p\le n-1$ of for the $(n+1,q+1)$-restriction, if $p=n-1$)
is equal to $a_{jl}c_{u,u'}(l)$. The determinant of this matrix is equal to $\det (a_{j,l})^L\prod_l \det (c_{u,u'}(l))$.
This completes the proof of the Proposition.
\end{proof}

\subsection{Generating function}
Let $w_0$ be a permutation interchanging $i$ and $2n+1-i$
and let
$\mathbb{B}_{>n}=\cup_{\mu\in \mathbb{N}\langle \epsilon_{n+1}, \dots, \epsilon_{2n} \rangle}\mathbb{B}_{\mu}$.
We also fix the notation $wt(f_{a,c})=\epsilon_c-\epsilon_a$ and
\[
wt(B)=\sum_{\substack{k=1,\dots,n-1,\\ i=1,\dots,m_k}} \sum_{f_{a,c}\in\mathcal{B}_{k,i}} wt(f_{ac}).
\]
In particular, $wt B$ is always non positive (negative if $B$ is nonempty).
In the rest of the section we prove the following equality:
\begin{multline}\label{baschar}
\sum_{B \in \mathbb{B}_{>n}}q^{\sum_{k,i}\sum_{f_{ac}\in\mathcal{B}_{k,i}}d(a,c,k,i)}x^{w_0(\la+wt(B))}\\=
\sum_{\sum_{|\sigma|=k} r_\sigma=m_k}
q^{\sum_{\sigma<\tau}k(\sigma,\tau)r_{\sigma}r_{\tau}}\prod_{i=1}^n x_i^{\sum_{\sigma\ni i} r_\sigma}
\prod_{k=1}^{n-1} \bin{m_k}{[\br]_k}_q
\end{multline}
(to be compared with \eqref{characterineq}).

Recall that we are working with fundamental representations of the Lie algebra $\msl_{2n}$. More precisely, we are
only interested in the weight spaces corresponding to the weights being linear combinations of
$\epsilon_{n+1},\dots,\epsilon_{2n}$. In a fundamental module $V({\omega_k})$ such vectors are parametrized
by
\begin{equation}\label{chain}
f_{1,l_1}\dots f_{k,l_k}v_{\omega_k}, n+1\le l_k<\dots<l_1\le 2n
\end{equation}
(an important observation is that the number of factors is exactly $k$).
In order to prove \eqref{baschar} let us generalize the formula. Namely, we consider
the order on the elements $f_{ac}$, $1\le a<n<c\le 2n$ induced by \eqref{forder}:
\begin{equation}\label{pqorder}
f_{1,n+1},f_{2,n+1},\dots, f_{n-1,n+1}, f_{1,n+2}, f_{2,n+2},\dots, f_{n-1,n+2},\dots, f_{1,2n},\dots,f_{n-1,2n}.
\end{equation}

\begin{rem}
To be precise, we only need the elements $f_{pq}$ with $p+q\le 2n+1$ (see Remark \ref{notall}).
\end{rem}

Now for a pair $a,c$ we denote by $\mathbb{B}_{>n}^{\le ac}$ the set of all $(a,c)$-restrictions of the elements from
$\mathbb{B}_{>n}$. In other words, $\mathbb{B}_{>n}^{\le ac}$ consists of all collections $(\mathcal{B}'_{k,i})$ obtained from
a collection $(\mathcal{B}_{k,i})\in \mathbb{B}_{>n}$ by forgetting all elements greater than $f_{a,c}$
(in the order \eqref{pqorder}).

The generalization of the formula \eqref{baschar} is as follows.

Let $I$ be a subset of the set $\{1,\dots,2n\}$. We define $wt(I)=\sum_{l\in I} \epsilon_l$.
For a pair $(a,c)$ such that  $1\le a<n<c\le 2n$ we say that $I\subset \{1,\dots,2n\}$ is $(a,c)$-completable
if there exists a product $f_{1,l_1}\dots f_{k,l_k}$, $k=|I|$, $l_1>\dots >l_k$ such that its $(a,c)$-restriction
(in order \eqref{pqorder})
being applied to the highest weight vector $v_{\omega_k}$ is equal (up to a sign) to $v_I$.

We note that if $I$ is $(a,c)$-completable, then $I$ is dense, i.e. $I\cap\{1,\dots,n\}=\{1,\dots,r\}$ for some $r$.
Two examples are in order:
\begin{itemize}
\item any $I\subset \{n+1,\dots,2n\}$ is $(n-1,2n)$-completable;
\item if $(a,c)=(1,n+1)$, then the completable $I$'s are as follows: $I=\{1,\dots,|I|\}$ if $|I|>1$; $I=\{1\}$ or
$I=\{n+1\}$, if $|I|=1$.
\end{itemize}

\begin{rem}
A dense $I=(1,\dots,r,i_{r+1},\dots,i_k)$, $r< n$, $2n\ge i_{r+1}>\dots>i_k>n$
is $(a,c)$-completable if and only if
\begin{itemize}
\item $i_{r+1}\le c$,
\item if $i_{r+1}=c$, then $r+1\le a$.
\end{itemize}
\end{rem}

In order to state the generalization of formula \eqref{baschar} we need one more piece of notation.
Let
\begin{gather*}
I=(1,\dots,r_1,i_{r_1+1},\dots,i_{l(I)}),\ r_1<n<i_{l(I)}<\dots <i_{r_1+1}\le 2n,\\
J=(1,\dots,r_2,j_{r_2+1},\dots,j_{l(J)}),\ r_2<n<j_{l(J)}<\dots <j_{r_2+1}\le 2n
\end{gather*}
be two $(a,c)$-completable sets. We consider the following sequence $P(I,J)=(p_1,\dots,p_u)$
(to be compared with $P(\sigma,\tau)$, see Definition \ref{kdefinition}).

First, we replace the elements $1,\dots,r_1$ in $I$ and $1,\dots,r_2$ in $J$ with the number {2n+1}.

Second, if $l(I)>l(J)$, then $p_1=i_{l(I)}$. If $l(I)<l(J)$, then $p_1=j_{l(J)}$.
If $l(I)=l(J)$, then $p_1=\max(i_{l(I)},j_{l(J)})$. We put the set ($I$ or $J$)
containing $p_1$ to the left and the other set to the right. If $l(I)=l(J)$ and $i_{l(I)}=j_{l(J)}$, then
we compare $i_{l(I)-1}$ and $j_{l(J)-1}$, etc. until we find $i_m\ne j_m$. Then we put to the left the set containing
$\max(i_m,j_m)$. We consider the sets $I$ and $J$ as columns.
We note that the numbers in both columns (non-strictly) decrease from top to bottom.

Now we move upstairs in the left column writing the elements to $P(I,J)$ provided the left element
is no smaller than the right one. If at some point the sign got changed, we change the column and write
the corresponding element to $P(I,J)$. We then continue moving upstairs until the sign is preserved
adding the elements we pass to $P(I,J)$. If at some point the sign got changed, we change the
column, etc.

\begin{dfn}
We define $k^{ac}(I,J)$ as the number of times we change the columns in $P(I,J)$.
\end{dfn}

\begin{example}
Let $I=\{1,2,n+4,n+2,n+1\}$ , $J=\{1,n+5,n+4,n+3\}$, $n$ large enough. Assume that both $I$ and $J$ are $(a,c)$-completable.
Then we change $1$ and $2$ to $2n+1$ and we get
the following columns
\[
\begin{tabular}{ccc}
${\bf 2n+1}$ & $\ge$ & $2n+1$\\
${\bf 2n+1}$ &  $>$  & ${\bf n+5}$ \\
$n+4$ & $\le$ & ${\bf n+4}$\\
${\bf n+2}$ & $<$ & ${\bf n+3}$\\
${\bf n+1}$ &  &
\end{tabular}
\]
The set  $P(I,J)$ is equal to $\{n+1,n+2,n+3,n+4,n+5,2n+1,2n+1\}$ and $k^{ac}(I,J)=2$.
\end{example}

For a collection of numbers $\rho=(\rho_I)_I$ labeled by the cardinality $k$ subsets of $\{1,\dots,2n\}$ and
summing up to $m_k$ we use the notation
$$\bin{m_k}{[{\bf \rho}]_k}_q=\frac{(q)_{m_k}}{\prod_I (q)_{\rho_I}}.$$
We prove the following combinatorial identities parametrized by pairs $(a,c)$:
\begin{prop}
\begin{multline}\label{acbaschar}
\sum_{B \in \mathbb{B}^{ac}_{>n}}q^{\sum_{k,i}\sum_{f_{jl}\in\mathcal{B}_{k,i}}d(j,l,k,i)}x^{\la+wt(B)}\\=
\sum_{\substack{\rho_I\ge 0\\ I: (a,c)-completable\\ \sum_{|I|=k} \rho_I=m_k}} q^{\sum_{\{I,J\}}k^{ac}(I,J)\rho_I\rho_J}
\prod_{k=1}^{n-1} \bin{m_k}{[{\bf \rho}]_k}_q x^{\sum_{I} \rho_I wt(I)}.
\end{multline}
\end{prop}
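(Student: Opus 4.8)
The plan is to prove the family of identities \eqref{acbaschar} by induction on the pair $(a,c)$ along the order \eqref{pqorder}, adjoining one root vector $f_{pq}$ at a time and tracking how both sides change. The specialization to $(a,c)=(n-1,2n)$, where every $I\subset\{n+1,\dots,2n\}$ is completable and $k^{ac}(I,J)$ matches $k(\sigma,\tau)$ under $w_0$, then recovers \eqref{baschar}.

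For the base case $(a,c)=(1,n+1)$ only the minimal element $f_{1,n+1}$ of \eqref{pqorder} is available. By condition $F1$ together with the density of restrictions (Lemma \ref{dense}), $f_{1,n+1}$ can be adjoined only to positions of fundamental type $\omega_1$, converting $\{1\}$ into $\{n+1\}$; for $k\ge2$ the outcome is non-dense and hence not completable. Thus the $(1,n+1)$-completable sets are $\{1,\dots,k\}$ for $k\ge2$ together with $\{1\}$ and $\{n+1\}$ for $k=1$. The left-hand side records, for each of the $m_1$ positions of type $\omega_1$, the binary choice of adjoining $f_{1,n+1}$ or not, weighted by the degree $d$ of Definition \ref{dpqki}. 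Since $k^{ac}(\{1\},\{n+1\})=0$, the right-hand side collapses to the Gaussian binomial $\bin{m_1}{[{\bf \rho}]_1}_q$, and the inversion interpretation of $d(1,n+1,1,i)$ (which counts earlier type-$\omega_1$ positions not containing $f_{1,n+1}$) produces exactly this $q$-binomial.

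For the inductive step let $(a',c')$ be the successor of $(a,c)$ in \eqref{pqorder} and assume the identity for $(a,c)$. Every $B'\in\mathbb{B}^{a'c'}_{>n}$ arises from its strict restriction $B\in\mathbb{B}^{ac}_{>n}$ by adjoining copies of $f_{a'c'}$ to some positions. As in Step 4 of the proof of Proposition \ref{basisevaluation}, the number $L$ of positions to which $f_{a'c'}$ can be validly adjoined while preserving $F1$, $F2$ and completability depends only on the profile $\rho$, not on the individual configuration; each such adjunction modifies a single completable set $I$ into a new one and shifts its weight by $wt(f_{a'c'})$. Summing over all ways of distributing the new element among these $L$ interchangeable positions produces a Gaussian binomial factor, which recombines with the coefficients already present to yield the updated $\prod_{k}\bin{m_k}{[{\bf \rho}]_k}_q$ on the right-hand side for $(a',c')$.

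The crux of the argument, and the step I expect to be the main obstacle, is matching the $q$-exponents. One must show that the total $d$-degree accumulated when $f_{a'c'}$ is adjoined, summed according to Definition \ref{dpqki} over the ordered admissible pairs, equals precisely the increment $\sum_{\{I,J\}}\bigl(k^{a'c'}(I,J)-k^{ac}(I,J)\bigr)\rho_I\rho_J$ in the cross-term exponent together with the inversion part of the new Gaussian binomial. Here $d(a',c',k,i)$ counts admissible pairs preceding $(k,i)$ that do not already contain $f_{a'c'}$; this count splits into contributions within a single completable type, giving the intra-type inversions of the $q$-binomial, and contributions across types, which must be identified with the sign-change count defining $k^{a'c'}$ (Definition \ref{kdefinition}). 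The delicate point is that adjoining $f_{a'c'}$ creates a new sign change in $P(I,J)$ exactly when the new entry crosses the threshold $c$ in one column but not the other, and one must verify, using the explicit ordering of admissible pairs via the differences $k-l$ and the vectors $\bar q$, that this combinatorial coincidence holds uniformly over all pairs $(I,J)$.
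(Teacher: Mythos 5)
Your overall strategy coincides with the paper's: induction on $(a,c)$ along the order \eqref{pqorder}, with base case $(1,n+1)$ handled exactly as you describe (only $\omega_1$-positions can receive $f_{1,n+1}$, the degree $d(1,n+1,1,i)$ counts earlier empty positions, and the sum collapses to a single $q$-binomial). The problem is that you explicitly defer the one step that actually constitutes the proof. You write that ``one must show'' the accumulated $d$-degree equals the increment of the quadratic form plus the inversion part of the new Gaussian binomial, and that ``one must verify'' the combinatorial coincidence between new sign changes in $P(I,J)$ and the ordering of admissible pairs --- but you never carry out either verification. Everything before that point (the count $L$ depending only on the profile, the appearance of a Gaussian binomial from distributing $f_{a'c'}$ over interchangeable positions, the recombination into $\prod_k\bin{m_k}{[{\bf\rho}]_k}_q$) is scaffolding; the identity is not established without the exponent matching.

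For comparison, the paper closes this gap concretely. Passing from $(a,c)$ to $(a+1,c)$ multiplies the fixed-profile contribution by the explicit factor $\prod_I \bin{\rho_I}{\rho_{I'}}_q\, q^{\sum_{J<I}\rho_{I'}(\rho_J-\rho_{J'})}$ (formula \eqref{factor}), where $\rho_J-\rho_{J'}$ is precisely the number of $(a+1,c)$-admissible places smaller than $I$ not occupied by $f_{a+1,c}$, as dictated by Definition \ref{dpqki}. The updated variables $\bar\rho$ of \eqref{J'} turn this exponent into $\sum_{J<I}\bar\rho_{I'}\bar\rho_J$ (formula \eqref{barrho}), and a three-case analysis of how $k^{a+1,c}$ relates to $k^{a,c}$ --- in particular $k^{a+1,c}(I',J)=k^{a,c}(I,J)+1$ exactly when $I>J$, and $k^{a+1,c}(I',J')=k^{a,c}(I,J)$ --- shows these cross terms are exactly the increment of the quadratic form. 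This case analysis is the content of the proposition; your proposal identifies where it must go but does not supply it, so as written the induction step is incomplete. (A minor additional omission: the successor of $f_{a,c}$ in \eqref{pqorder} is $f_{a+1,c}$ only when $a+c<2n+1$; the paper notes the case $a+c=2n+1$ separately, which your proposal does not address.)
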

\begin{proof}
The sum in the quadratic form in the right hand side is taken over all unordered pairs of $I,J$, i.e.
each pair appears only once (we note that $k^{ac}(I,J)=k^{ac}(J,I)$ and $k^{ac}(I,I)=0$).
We prove formula \eqref{acbaschar} by induction on $a,c$ in the order \eqref{pqorder}. We note that
the $(n-1,2n)$-formula coincides with the desired identity \eqref{baschar}.

Let us start with the base of induction $(a,c)=(1,n+1)$. We note that $\mathbb{B}^{1,n+1}_{>n}$ consists
of collections $(\mathcal{B}_{k,i})$ with the following entries: all entries $\mathcal{B}_{k,i}$
with $k>1$ are empty and the entries of the form $\mathcal{B}_{1,i}$ are either empty or equal to $f_{1,n+1}$.
Indeed, a non empty  entry $\mathcal{B}_{k,i}$ is equal to $f_{1,n+1}$. If $k$ is greater than one, there is no way
to extend it to an element of the form \eqref{chain}. Now let $1\le i_1<\dots<i_s\le m_1$ be the numbers
such that $\mathcal{B}_{1,i}=\{f_{1,n+1}\}$. Then by Definition \ref{dpqki} for $i$ such that
$\mathcal{B}_{1,i}=\{f_{1,n+1}\}$ the quantity $d(1,n+1,k,i)$ is equal to the number of $i'<i$ such that
$\mathcal{B}_{1,i}=\emptyset$. Therefore,
\begin{multline*}
\sum_{B \in \mathbb{B}^{1,n+1}_{>n}}q^{\sum_{k,i}\sum_{f_{jl}\in\mathcal{B}_{k,i}}d(j,l,k,i)}x^{\la+wt(B)}\\=
\sum_{\rho_1+\rho_{n+1}= m_1} \bin{m_1}{\rho_{1},\rho_{n+1}}_q x^{\la+\rho_{n+1}wt(f_{1,n+1})}.
\end{multline*}
Here $\rho_{n+1}$ denotes the number of nonempty $\mathcal{B}_{1,i}$ (i.e. the number of $i$ such that
$\mathcal{B}_{1,i}=\{f_{1,n+1}\}$). We note that $f_{1,n+1}$ maps $v_1$ to $v_{n+1}$, so $\rho_{n+1}$ is
equal to the number of times the basis vector $v_{n+1}$ shows up.

Before going to the general induction step, let us write down explicitly the formula for the case $(a,c)=(n-1,n+1)$.
In this case the $(a,c)$-completable $B$ are as follows: either $\mathcal{B}_{k,i}=\emptyset$ or
$\mathcal{B}_{k,i}=\{f_{k,n+1}\}$. In other words, $(a,c)$-completable sets $I$ are either $\{1,\dots,k\}$ or
$\{1,\dots,k-1,n+1\}$. It is easy to see that for all such $I,J$ we have $k^{(n-1,n+1)}(I,J)=0$. According to
Definition  \ref{dpqki} we obtain the following equality
\begin{multline*}
\sum_{B \in \mathbb{B}^{n-1,n+1}_{>n}}q^{\sum_{k,i}\sum_{f_{jl}\in\mathcal{B}_{k,i}}d(j,l,k,i)}x^{\la+wt(B)}\\=
\sum_{\substack{\rho_1+\rho_{n+1}=m_1\\ \rho_{12}+\rho_{1,n+1}=m_2\\ \dots \\ \rho_{1,\dots,n-1}+\rho_{1,\dots,n-2,n+1}=m_{n-1}}}
\bin{m_1}{\rho_{1},\rho_{n+1}}_q\dots  \bin{m_{n-1}}{\rho_{1,2,\dots, n-1},\rho_{1,\dots,n-2,n+1}}_q \\
\times x^{\la +\sum_{k=1}^{n-1} \rho_{1,\dots,k-1,n+1}(\epsilon_{n+1}-\epsilon_k)}.
\end{multline*}
In fact, according to Definition \ref{dpqki} in  order to compute the left hand side of the above formula we count the
number of pairs $i_1,i_2$ such that
$i_1<i_2$ and  $\mathcal{B}_{k,i_1}=\emptyset$, $\mathcal{B}_{k,i_2}=\{f_{k,n+1}\}$. Clearly, the generating function
is equal to the product of $q$-binomial coefficients given in the right hand side of the formula.

We now proceed by induction. Assume that equality \eqref{acbaschar} holds for a pair $(a,c)$. There are two
separate cases: $a+c<2n+1$ and $a+c=2n+1$. We work out the first case, the second is very similar.

The element $f_{a,c}$ is followed by the element $f_{a+1,c}$ (in our case $a+1+c$ is still no larger than $2n+1$).
We consider all admissible places $\mathcal{B}_{k,i}$. The corresponding vectors
$v_I$ satisfy the following properties: $I\cap\{1,\dots,n\}=\{1,\dots,a+1\}$ and
$I\cap\{n+1,\dots,2n\}\subset \{n+1,\dots,c-1\}$. Let us consider the set of $(a,c)$-completable $I$ and
the corresponding variables $\rho_I$ showing up in the formula \eqref{acbaschar}. We also consider all sets
$B=(\mathcal{B}_{k,i})$ showing up in the left hand side of \eqref{acbaschar} (for the pair $(a,c)$)
with fixed numbers $\rho_I$ (for all $(a,c)$-completable $I$). In order to pass to the $(a+1,c)$ case we
apply the operator $f_{a+1,c}$ to some of the $\mathcal{B}_{k,i}$. As a result some of the $(a,c)$-completable
and $(a+1,c)$-admissible $I$ got replaced with $I'=I\setminus \{a+1\}\cup\{c\}$. By induction we know that
the sum of the terms
\[
q^{\sum_{k,i}\sum_{f_{jl}\in\mathcal{B}_{k,i}}d(j,l,k,i)}x^{\la+wt(B)}
\]
for all $B$ with the fixed numbers $\rho_I$ for all $(a,c)$-completable $I$ satisfying
$\sum_{|I|=k} \rho_I=m_k$ is equal to
\begin{equation}\label{rhs}
q^{\sum_{\{I,J\}}k^{ac}(I,J)\rho_I\rho_J}
\prod_{k=1}^{n-1} \bin{m_k}{[{\bf \rho}]_k}_q x^{\sum_{I} \rho_I wt(I)}
\end{equation}
(the sum in the quadratic form is taken over all unordered pairs of $(a,c)$-completable $I,J$, i.e.
each pair appears only once). We want to control what is happening with the expression \eqref{rhs}
after we pass from the $(a,c)$-case to the $(a+1,c)$-case. For each $(a,c)$-completable and $(a+1,c)$-admissible
$I$ the $\rho_I$ positions of $B$ with $\mathcal{B}_{k,i}$ producing vector $v_I$ are divided into two
parts: the first part consists of the positions where $\mathcal{B}_{k,i}$ does not change (hence, $I$ does not change);
the second part consists of positions with $\mathcal{B}'_{k,i}=\mathcal{B}_{k,i}\cup \{f_{a+1,c}\}$ or, equivalently,
$I'=I\setminus\{a+1\}\cup\{c\}$.  We denote the number of the positions from the second group by
$\rho_{I'}$; thus, the number of positions of the first group is $\rho_I-\rho_{I'}$.
The change from $(a,c)$ to $(a+1,c)$ amounts in multiplication by the expression
\begin{equation}\label{factor}
\prod_I \bin{\rho_I}{\rho_{I'}}_q q^{\sum_{J<I} \rho_{I'}(\rho_J-\rho_{J'})},
\end{equation}
where the product is taken over all $(a,c)$-completable and $(a+1,c)$-admissible $I$ and the
sum in the exponent of $q$ is taken over all pairs of $(a,c)$-completable and $(a+1,c)$-admissible
$I, J$ with the condition $J<I$ coming from the rule formulated above Definition \ref{dpqki}.
In fact, $\rho_J-\rho_{J'}$ is the number of $(a+1,c)$-admissible places smaller than $I$, which are not occupied by $f_{a+1,c}$.
Multiplying  the $q$-binomial part of \eqref{factor} with the corresponding term of the $(a,c)$ formula \eqref{acbaschar} we obtain
\begin{equation}
\prod_{k=1}^{n-1}\bin{m_k}{[{\bf \rho}]_k}_q
\prod_{\substack{I\\ I: (a,c)-completable\\ I: (a+1,c)-admissible}} \bin{\rho_I}{\rho_{I'}}_q=
\prod_{k=1}^{n-1}\bin{m_k}{[{\bf \bar \rho}]_k}_q,
\end{equation}
where
\begin{itemize}
\item in the product $\prod_{k=1}^{n-1}\bin{m_k}{[{\bf \rho}]_k}_q$ the component of the vector $[{\bf \rho}]_k$ are
numbers $\rho_I$ parametrized by $(a,c)$-completable $I$;
\item in the product $\prod_{k=1}^{n-1}\bin{m_k}{[{\bf \bar\rho}]_k}_q$ the component of the vector $[{\bf \bar\rho}]_k$ are
numbers $\bar\rho_I$ parametrized by $(a+1,c)$-completable $I$.
\end{itemize}
We note that
\begin{equation}\label{J'}
\bar\rho_J=\begin{cases} \rho_J-\rho_{J'}, &  J \text{ is } (a+1,c) \text{ admissible},\\
\rho_J, &  J \text{ is not } (a+1,c)-\text{admissible}. \end{cases}
\end{equation}
Therefore,
\begin{equation}\label{barrho}
q^{\sum_{J<I} \rho_{I'}(\rho_J-\rho_{J'})}=q^{\sum_{J<I} \bar \rho_{I'}\bar \rho_J}.
\end{equation}

Let us now write down the function $k^{a+1,c}$. First, if $I$ and $J$ are not $(a+1,c)$-admissible, then
$k^{a+1,c}(I,J)=k^{a,c}(I,J)$.  Second, if $I$ is $(a+1,c)$-admissible, but $J$ is not $(a+1,c)$-admissible, then
\[
k^{a+1,c}(I,J)=k^{a+1,c}(I',J)=k^{a,c}(I,J).
\]
Third, if both $I$ and $J$ are $(a+1,c)$-admissible, then
\begin{gather*}
k^{a+1,c}(I,J)=k^{a,c}(I,J),\\
k^{a+1,c}(I',J')=k^{a,c}(I,J),\\
k^{a+1,c}(I',J)=
\begin{cases}
k^{a,c}(I,J),& I<J\\ k^{a,c}(I,J)+1,& I>J.
\end{cases}
\end{gather*}
Using formula \eqref{barrho}, we derive the $(a+1,c)$-case of formula \eqref{acbaschar}.
\end{proof}

\begin{cor}
Equality \eqref{baschar} holds true.
\end{cor}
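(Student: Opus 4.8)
The plan is to deduce \eqref{baschar} by specializing the family of identities \eqref{acbaschar} to the \emph{largest} pair in the order \eqref{pqorder}, namely $(a,c)=(n-1,2n)$, and then to transport the resulting $\msl_{2n}$ statement into the desired $\msl_n$ statement by means of the Weyl involution $w_0\colon j\mapsto 2n+1-j$. Since the preceding Proposition already proves \eqref{acbaschar} for every pair $(a,c)$ by induction, all the work left is the bookkeeping of this specialization.

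First I would record what each ingredient of \eqref{acbaschar} becomes at $(a,c)=(n-1,2n)$. Because $f_{n-1,2n}$ is the maximal element of \eqref{pqorder}, the $(n-1,2n)$-restriction forgets nothing, so $\mathbb{B}^{(n-1,2n)}_{>n}=\mathbb{B}_{>n}$ and the left-hand side of \eqref{acbaschar} is already the left-hand side of \eqref{baschar}, except that $x^{\la+wt(B)}$ has not yet been hit by $w_0$. On the combinatorial side, a set $I$ is $(n-1,2n)$-completable precisely when $I\subset\{n+1,\dots,2n\}$: the full product $f_{1,l_1}\cdots f_{k,l_k}$ of \eqref{chain} pushes $v_{\omega_k}$ entirely into $\{n+1,\dots,2n\}$, and for the top pair the $(n-1,2n)$-restriction equals the full product, so the completable sets are exactly the subsets of $\{n+1,\dots,2n\}$ (this is the first of the two bulleted examples). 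Hence the right-hand sum of \eqref{acbaschar} runs over the variables $\rho_I$ indexed by $I\subset\{n+1,\dots,2n\}$.

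Next I would apply the substitution $x_j\mapsto x_{2n+1-j}$ to every weight monomial. On the left this turns $x^{\la+wt(B)}$ into $x^{w_0(\la+wt(B))}$, matching \eqref{baschar}. On the right it sets up a bijection $I\mapsto\sigma:=w_0(I)$ between subsets of $\{n+1,\dots,2n\}$ and subsets $\sigma\subset\{1,\dots,n\}$ of the same cardinality, under which $wt(I)\mapsto wt(\sigma)=\sum_{i\in\sigma}\epsilon_i$; setting $r_\sigma:=\rho_I$ then converts $x^{\sum_I\rho_I\,wt(I)}$ into $\prod_{i=1}^n x_i^{\sum_{\sigma\ni i}r_\sigma}$, converts the constraints $\sum_{|I|=k}\rho_I=m_k$ into $\sum_{|\sigma|=k}r_\sigma=m_k$, and carries each $q$-multinomial factor $\bin{m_k}{[\rho]_k}_q$ verbatim to $\bin{m_k}{[\br]_k}_q$. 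The $q$-exponents coming from the degree data $d(a,c,k,i)$ on the left are untouched by $w_0$ and already agree with those in \eqref{baschar}.

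The single point that requires genuine verification — and the step I expect to carry the real content — is the equality $k^{(n-1,2n)}(I,J)=k(w_0(I),w_0(J))$ matching the quadratic exponent of $q$ in \eqref{acbaschar} with that in \eqref{baschar}. For $I,J\subset\{n+1,\dots,2n\}$ no entries are replaced by $2n+1$, so $P(I,J)$ is produced purely by the zigzag comparison of the two decreasing columns, and its number of column switches is exactly the number of sign changes counted in Definition \ref{kdefinition}; as $w_0$ is order-reversing it flips every inequality between entries of $I,J$ into the opposite inequality between the corresponding entries of $\sigma=w_0(I),\tau=w_0(J)$, and since the count of sign changes is invariant under a global reversal of all inequalities (and $k^{ac}$ is symmetric, so it does not matter which of $\sigma,\tau$ is the smaller one), one obtains $k^{(n-1,2n)}(I,J)=k(\sigma,\tau)$. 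Feeding these four matchings into the $(n-1,2n)$-instance of \eqref{acbaschar} yields \eqref{baschar}.
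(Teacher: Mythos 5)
Your proposal is correct and follows essentially the same route as the paper: the paper's proof is exactly the specialization of \eqref{acbaschar} to $(a,c)=(n-1,2n)$ together with the substitution $\rho_I=r_{w_0 I}$, using the observation that all $(n-1,2n)$-completable $I$ lie in $\{n+1,\dots,2n\}$. Your additional check that $k^{(n-1,2n)}(I,J)=k(w_0(I),w_0(J))$ makes explicit a point the paper leaves implicit, but it is the same argument.
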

\begin{proof}
Formula \eqref{baschar} is obtained from formula \eqref{acbaschar} for $(a,c)=(n-1,2n)$ via substitution
$\rho_I=r_{w_0\sigma}$, where $w_0$ is the longest element in the symmetric group $S_{2n}$ (note that all
$(n-1.2n)$-completable $I$ satisfy $I\subset \{n+1,\dots,2n\}$).
\end{proof}

\begin{thm}\label{main}
We have the following formula for the character of the Weyl module $W(\lambda)$ over $\msl_n$:
\begin{equation}
{\rm ch}W(\lambda)=\sum_{\sum_{|\sigma|=k} r_\sigma=m_k}
q^{\sum_{\sigma<\tau}k(\sigma,\tau)r_{\sigma}r_{\tau}}\prod_{i=1}^n x_i^{\sum_{\sigma\ni i} r_\sigma}
\prod_{k=1}^{n-1} \bin{m_k}{[\br]_k}_q.
\end{equation}
\end{thm}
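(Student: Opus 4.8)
The plan is to squeeze $\ch W(\la)$ between the upper bound produced in Section \ref{Pluecker} from the Pl\"ucker relations and a matching lower bound coming from the explicit fusion basis built in this section, and then to let the two comparison lemmas force equality. Denote by $R$ the right-hand side of the statement, so that $(q)_\la^{-1}R$ is the left-hand side of \eqref{ineq}. The whole argument is organized around Proposition \ref{fusionproduct}, Lemma \ref{ineqchar} and Lemma \ref{inverceineqchar}: together they reduce the theorem to producing a basis of a fusion module whose graded character equals $R$.

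First I would record the upper bound. Corollary \ref{characterineq} reads $(q)_\la^{-1}R\ge \ch\mathbb{W}(\la)$; multiplying by $(q)_\la$ and using the global--local relation \eqref{global-local} this becomes $R\ge \ch W(\la)$, made precise by Lemma \ref{ineqchar}: at the lowest $q$-degree where the two differ, the coefficient (a Laurent polynomial in the $x_i$) of $R$ strictly dominates that of $\ch W(\la)$.

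For the matching lower bound I would pass to $\msl_{2n}$. The embedding $\msl_n\hookrightarrow\msl_{2n}$ realizes $W(\la)$ as the part of $W(\overline{\la})$ with weights in $\bZ\langle\epsilon_1,\dots,\epsilon_n\rangle$; since $W(\overline\la)$ has the same character as $\bigotimes_k V(\omega_k)^{\otimes m_k}$ (Proposition \ref{fusionproduct}), all of its weights have non-negative $\epsilon$-coordinates, so these weights lie in $\bZ_{\ge 0}\langle\epsilon_1,\dots,\epsilon_n\rangle$, and applying $w_0$ they become exactly the weights $\mu\in\bZ_{\ge 0}\langle\epsilon_{n+1},\dots,\epsilon_{2n}\rangle$ treated in Proposition \ref{basisevaluation}. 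That proposition supplies the basis $\{\mathcal{F}(B)\}_{B\in\mathbb{B}_\mu}$ of each such weight space of the $\msl_{2n}$ fusion module, and the generating-function identity \eqref{baschar} evaluates its character, via the twist $x^{w_0(\la+wt(B))}$, to be precisely $R$. Because each $\mathcal{F}(B)=\Pi(B)v$ lies in the filtration piece $F_{\deg\Pi(B)}$ of Proposition \ref{fusionproduct}, its image in $\gr\cong W(\overline\la)$ has $t$-degree at most $\deg\Pi(B)$; this is exactly the hypothesis of Lemma \ref{inverceineqchar}, which therefore gives $R\le \ch W(\overline\la)$ in the opposite first-difference sense, and by the $w_0$-invariance of the graded character the weight-$\mu$ part of $\ch W(\overline\la)$ equals the weight-$w_0\mu$ part of $\ch W(\la)$.

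Finally I would assemble the squeeze. If $\ch W(\la)$ and $R$ differed, then at their lowest differing $q$-degree Lemma \ref{ineqchar} would force the coefficient of $\ch W(\la)$ to be strictly smaller than that of $R$, whereas Lemma \ref{inverceineqchar} (with basis character equal to $R$) would force it to be strictly larger; this contradiction yields $\ch W(\la)=R$, which is the assertion. The main obstacle is not this final formal squeeze but the two results it consumes: proving that $\{\mathcal{F}(B)\}$ is indeed a basis (Proposition \ref{basisevaluation}) and carrying out the intricate induction behind the generating-function identity \eqref{baschar}; granted those, the theorem is immediate.
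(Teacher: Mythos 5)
Your proposal is correct and takes essentially the same route as the paper: the paper's proof of Theorem \ref{main} is exactly the observation that the $\mathcal{F}$-basis of Proposition \ref{basisevaluation} has character equal to $(q)_\lambda$ times the left-hand side of \eqref{ineq} (via the identity \eqref{baschar}), so that Lemma \ref{ineqchar} and Lemma \ref{inverceineqchar} squeeze the upper and lower bounds into an equality. Your write-up merely makes explicit the $w_0$-twist, the weight-space bookkeeping in $\msl_{2n}$, and the filtration argument that the paper leaves implicit.
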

\begin{proof}
The character of $\mathcal{F}$-basis is equal to the character of the left hand side of \eqref{ineq} multiplied by
$(q)_{\lambda}$. Therefore Lemma \ref{ineqchar} and Lemma \ref{inverceineqchar} imply our Theorem.
\end{proof}

\begin{cor}\label{cormain}
$\mathbb{W} \simeq \overline{\mathbb{W}}\simeq \mathcal{M}$, i.e. the algebra $\mathbb{W}$ (a.k.a. $\mathcal{M}$) is generated
by the dual fundamental Weyl modules with the set of quadratic relations
\eqref{snakeplueckerequation}.
\end{cor}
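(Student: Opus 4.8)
The plan is to build a chain of graded surjections $\overline{\mathbb{W}} \twoheadrightarrow \mathcal{M} \twoheadrightarrow \mathbb{W}$ and then use the character computation of Section \ref{Weyl} to force every arrow to be an isomorphism. Here $\overline{\mathbb{W}}$ denotes, as in the proof of Proposition \ref{characterinequalitydegeneration}, the quotient of the polynomial algebra in the variables $X_\tau^{(l)}$ by the ideal generated by the relations \eqref{snakeplueckerequation}. The first point to observe is that the minors $m_\tau^{(l)}$ \emph{themselves} satisfy these relations: the $P=P(\sigma,\tau)$ specialization of Proposition \ref{minorplueckerequation} is an identity proved directly for the series $m_{i_1,\dots,i_q}(s)$ inside $\mathbb{K}[z_{ij}^{(l)}][[s]]$, via multilinearity and the alternating property of determinants, and does not use the passage to $\mathbb{W}$. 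Hence the assignment $X_\tau^{(l)} \mapsto m_\tau^{(l)}$ descends to a surjection $\overline{\mathbb{W}} \twoheadrightarrow \mathcal{M}$. Composing with the surjection $\mathcal{M} \twoheadrightarrow \mathbb{W}$ of Proposition \ref{frommatrixtoweyl} produces the chain above, whose composite is the tautological surjection $\overline{\mathbb{W}} \twoheadrightarrow \mathbb{W}$ given by $X_\tau^{(l)} \mapsto X_\tau^{(l)}$.

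Next I would collapse the chain by comparing characters. By Theorem \ref{main} together with the local-to-global relation of Proposition \ref{localglobal}, the character $\ch \mathbb{W}(\lambda) = (q)_\lambda^{-1} \ch W(\lambda)$ equals the right-hand side of \eqref{degeneratedcharactereq}. By Proposition \ref{degeneratedcharacter}, the character $\ch \widetilde{\mathbb{W}}$ equals the same expression, so $\ch \widetilde{\mathbb{W}} = \ch \mathbb{W}$. Feeding this into Proposition \ref{characterinequalitydegeneration}, whose proof supplies $\ch \widetilde{\mathbb{W}} \geq \ch \overline{\mathbb{W}} \geq \ch \mathbb{W}$, forces these three characters to coincide; and since $\mathcal{M}$ is squeezed between $\overline{\mathbb{W}}$ and $\mathbb{W}$ its character agrees as well.

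Finally, the three algebras $\overline{\mathbb{W}}$, $\mathcal{M}$, $\mathbb{W}$ are bigraded by the dominant weight and the $t$-degree, with finite-dimensional homogeneous components, and every arrow respects this bigrading. A surjection of such graded spaces with equal graded dimensions is an isomorphism in each degree, hence an isomorphism; thus $\overline{\mathbb{W}} \simeq \mathcal{M} \simeq \mathbb{W}$. The isomorphism $\mathbb{W} \simeq \mathcal{M}$ is the first assertion, and the isomorphism $\mathbb{W} \simeq \overline{\mathbb{W}}$ says precisely that $\mathbb{W}$ is the quotient of the polynomial algebra on the dual fundamental Weyl modules $\bigoplus_k \mathbb{W}(\omega_k)^*$ by the ideal generated by the quadratic relations \eqref{snakeplueckerequation}.

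I expect the only genuine obstacle to be the equality $\ch \widetilde{\mathbb{W}} = \ch \mathbb{W}$, i.e. Theorem \ref{main}, whose proof occupies all of Section \ref{Weyl} through the fusion and evaluation-module construction; granting that input, the present Corollary reduces to the purely formal squeeze of graded surjections described above.
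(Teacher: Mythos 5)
Your proposal is correct and follows essentially the argument the paper intends: the corollary is deduced from Theorem \ref{main} by the squeeze of graded surjections $\overline{\mathbb{W}} \twoheadrightarrow \mathcal{M} \twoheadrightarrow \mathbb{W}$ together with the character chain $\ch\widetilde{\mathbb{W}} \geq \ch\overline{\mathbb{W}} \geq \ch\mathcal{M} \geq \ch\mathbb{W}$ collapsing once Theorem \ref{main} identifies the two ends. Your observation that the relations \eqref{snakeplueckerequation} hold already for the minors in $\mathbb{K}[z_{ij}^{(l)}][[s]]$ (so that $\overline{\mathbb{W}}$ surjects onto $\mathcal{M}$ and not merely onto $\mathbb{W}$) is exactly the point needed to get the first assertion, and the finite-dimensionality of the bigraded components justifies the final step.
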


\appendix

\section{Symplectic rank two case}\label{app}
We work out only the case of type $A$ because there is a nice combinatorics of minors describing Pl\"ucker relations.
So we can write down all relations in explicit way.
However the methods from the main body of this paper can be applied to Weyl modules over Lie algebras of other types.
To illustrate this let us compute the characters of Weyl modules in type $C_2$. Both fundamental modules satisfy conditions of Lemma
\ref{fundamentalstructure}.
The fundamental module $V(\omega_1)^*$ has a basis $\{X_{\pm\epsilon_1},X_{\pm\epsilon_2}\}$,
the fundamental module $V(\omega_2)^*$ has a basis $\{X_{\pm\epsilon_1\pm\epsilon_2},X_0\}$.
Hence the Weyl module $W(\omega_1)^*$ has a basis $\{X_{\pm\epsilon_1}^{(l)},X_{\pm\epsilon_2}^{(l)}\}$,
the Weyl module $W(\omega_2)^*$ has a basis $\{X_{\pm\epsilon_1\pm\epsilon_2}^{(l)},X_0^{(l)}\}$,
$l \in \mathbb{N}\cup\{0\}$. Let
\[X_{\mu}(s)=\sum_{l\ge 0} X_{\mu}^{(l)}s^l.\]
The algebra $\mathbb{V}=\bigoplus_{\lambda \in P_+}V(\lambda)^*$ satisfies the following relations:
\begin{equation}\label{relarionsC222}
X_{\epsilon_1+\epsilon_2}X_{-\epsilon_1-\epsilon_2}+ X_{\epsilon_1-\epsilon_2}X_{-\epsilon_1+\epsilon_2}+X_0^2=0;
\end{equation}
\begin{equation}\label{relarionsC212}
X_{\epsilon_1}X_{-\epsilon_{1}+\epsilon_2}+ X_{-\epsilon_1}X_{\epsilon_{1}+\epsilon_2}+X_{\epsilon_2}X_0=0
\end{equation}
and three relations obtained from \eqref{relarionsC212} by the Weyl group.
Therefore we have the following relations in algebra $\mathbb{W}$:

\begin{equation}\label{relarionsC222semiinf}
X_{\epsilon_1+\epsilon_2}(s)X_{-\epsilon_1-\epsilon_2}(s)+ X_{\epsilon_1-\epsilon_2}(s)X_{-\epsilon_1+\epsilon_2}(s)+X_0(s)^2=0;
\end{equation}

\begin{equation}\label{relarionsC212semiinf}
X_{\epsilon_1}(s)X_{-\epsilon_{1}+\epsilon_2}(s)+ X_{-\epsilon_1}(s)X_{\epsilon_{1}+\epsilon_2}(s)+X_{\epsilon_2}(s)X_0(s)=0
\end{equation}
and three relations obtained from \eqref{relarionsC212semiinf} by the Weyl group.
Let $\mathcal R$ be the set of nine-tuples of numbers $(r_{\pm\epsilon_1},r_{\pm\epsilon_2},r_{\pm\epsilon_1\pm \epsilon_2}, r_0)$
such that $r_{\epsilon_1}+r_{-\epsilon_1}+r_{\epsilon_2}+r_{-\epsilon_2}=m_1$,
$\sum r_{\pm \epsilon_1\pm \epsilon_2}+r_{0}=m_2$.
For $\bar r \in \mathcal{R}$ define:
\[b(\bar r)=r_{\epsilon_2-\epsilon_1}r_{-\epsilon_2+\epsilon_1}+(r_{\epsilon_2+\epsilon_1}+r_{-\epsilon_2+\epsilon_1})r_{-\epsilon_1}+
(r_{\epsilon_2+\epsilon_1}+r_{\epsilon_2-\epsilon_1})r_{-\epsilon_2},\]
\[\varsigma_1(\bar r)=
r_{\epsilon_1}-r_{-\epsilon_1}+r_{\epsilon_1+\epsilon_2}+r_{\epsilon_1-\epsilon_2}-r_{-\epsilon_1+\epsilon_2}-r_{-\epsilon_1-\epsilon_2},\]
\[\varsigma_2(\bar r)=
r_{\epsilon_2}-r_{-\epsilon_2}+r_{\epsilon_1+\epsilon_2}-r_{\epsilon_1-\epsilon_2}+r_{-\epsilon_1+\epsilon_2}-r_{-\epsilon_1-\epsilon_2}\]

Therefore for $\lambda=m_1\omega_1+m_2\omega_2$ we have the following inequality of characters:
\begin{multline}\label{charaterC2}
(q)_{\lambda}^{-1}\sum_{\bar r \in \mathcal{R}}
\prod q^{b(\bar r)} x_1^{\varsigma_1(\bar r)} x_2^{\varsigma_2(\bar r)}\\
\binom{m_1}{r_{\epsilon_1},r_{-\epsilon_1},r_{\epsilon_2},r_{-\epsilon_2}}_q\binom{m_2}{r_{\pm \epsilon_1\pm \epsilon_2},r_{0}}_q
\geq {\rm ch}\mathbb{W}(\lambda).
\end{multline}

For a tuple of pairwise distinct elements
$(\zeta_{k,i})$, $1 \leq i \leq m_k$, $k=1,2$ we consider the tensor product:
\[V(\lambda)_{(\zeta_{k,i})}=\bigotimes_{k=1}^{2}\bigotimes_{i=1}^{m_k}V(\omega_k)_{\zeta_{k,i}}.\]
Let $v$ be the highest weight vector of this tensor product.

The negative root vectors of $\mathfrak{sp}_4$ are of the form $f_{-2\epsilon_1},f_{-\epsilon_2-\epsilon_1},f_{-2\epsilon_2},f_{-\epsilon_2+\epsilon_1}$.
We consider the set $\mathbb{B}$ of collections $B=(\mathcal{B}_{k,i})$, $k=1,2$, $1 \leq i \leq m_k$, where
$\mathcal{B}_{1,i}$ is either empty or contains one of the following elements: $f_{-\epsilon_2-\epsilon_1}$,
$f_{-2\epsilon_2}$, $f_{-\epsilon_2+\epsilon_1}$; $\mathcal{B}_{2,i}$ is equal to one of five following sets:
\[\emptyset, \{f_{-2\epsilon_1}\},\{f_{-\epsilon_2-\epsilon_1}\},\{f_{-2\epsilon_2}\},\{f_{-2\epsilon_1},f_{-2\epsilon_2}\}.\]
Define the following order on elements $f_{\alpha}$:
\[f_{-2\epsilon_1}<f_{-\epsilon_2-\epsilon_1}<f_{-2\epsilon_2}<f_{-\epsilon_2+\epsilon_1}.\]
For any $B \in \mathbb{B}$ the strict $\alpha$-restriction $B'$ of
$B$ is defined by $B'=(\mathcal{B}_{k,i}')$, where  $\mathcal{B}_{k,i}'=\mathcal{B}_{k,i}\cap \{f_{\gamma}, f_{\gamma}<f_{\alpha}\}$.
We define $\alpha$-admissible pairs in the following way. A pair $(k,i)$ is $\alpha$-admissible if the $\alpha$-restriction
$\mathcal{B}_{k,i}'$ is empty or $\alpha=-2\epsilon_2$, $k=2$ and $\mathcal{B}_{k,i}'=\{f_{-2\epsilon_1}\}$.
We define an order on $\alpha$-admissible pairs in the following way: $(k,i)<(k,j)$ if $i<j$ and $(2,i)<(1,j)$.
Then for $f_{\alpha} \in \mathcal{B}_{k,i}$ define $d(\alpha,k,i)$ as the number of $\alpha$-admissible pairs $(k',i')<(k,i)$
such that $f_{\alpha}\notin \mathcal{B}_{k',i'}$.
Then:
\[\mathcal{F}(B)=\prod_{f_{\alpha}\in \mathcal{B}_{k,i}}f_{\alpha}\otimes t^{d(\alpha,k,i)}v.\]
\begin{prop}
The set $\{\mathcal{F}(B)\}$ is a basis of $V(\lambda)_{(\zeta_{k,i})}$.
\end{prop}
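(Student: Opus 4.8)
The plan is to mimic, in rank two, the four-step proof of Proposition \ref{basisevaluation}; the simplification here is that we work directly with the symplectic fundamental modules, so no doubling $\msl_n\hookrightarrow\msl_{2n}$ is needed. Forgetting the $t$-degrees, each collection $B=(\mathcal{B}_{k,i})$ determines a vector $\mathcal{P}(B)=\bigotimes_{k,i}\bigl(\prod_{f_\alpha\in\mathcal{B}_{k,i}} f_\alpha\bigr)v_{k,i}$, and by the choice of the admissible sets $\mathcal{B}_{1,i}$ and $\mathcal{B}_{2,i}$ these are exactly the tensor products of the standard weight bases of the factors $V(\omega_k)_{\zeta_{k,i}}$ (the five options for $\mathcal{B}_{2,i}$ list the five weight vectors of $V(\omega_2)$, the pair $\{f_{-2\epsilon_1},f_{-2\epsilon_2}\}$ producing the lowest weight vector). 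Hence $\{\mathcal{P}(B):B\in\mathbb{B}\}$ is a basis of $V(\lambda)_{(\zeta_{k,i})}$ and $|\mathbb{B}|=\dim V(\lambda)_{(\zeta_{k,i})}$, so it suffices to prove that the matrix expressing $\{\mathcal{F}(B)\}$ in the $\mathcal{P}$-basis is non-degenerate.

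First I would establish the triangularity of Step 1. To each $B$ I attach the multiset $M(B)=\bigsqcup_{k,i}\mathcal{B}_{k,i}$ and order multisets lexicographically with respect to $f_{-2\epsilon_1}<f_{-\epsilon_2-\epsilon_1}<f_{-2\epsilon_2}<f_{-\epsilon_2+\epsilon_1}$. Expanding $\mathcal{F}(B)=\prod f_\alpha\otimes t^{d(\alpha,k,i)}\,v$ by the coproduct, every summand uses the same multiset of root vectors, and rewriting any product $\prod f_\alpha v_{\omega_k}$ in its normal form can only lower the multiset; this should give $\mathcal{F}(B)\in\bigoplus_{M(\tilde B)\le M(B)}\mathbb{K}\mathcal{P}(\tilde B)$. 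It then remains, for each fixed multiset $M$, to show that the classes of $\{\mathcal{F}(B):M(B)=M\}$ are linearly independent modulo $\bigoplus_{M(B)<M}\mathbb{K}\mathcal{P}(B)$.

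Within a fixed multiset I would run the induction of Steps 3--4 along the root order, using the strict $\alpha$-restrictions $\mathcal{B}'_{k,i}=\mathcal{B}_{k,i}\cap\{f_\gamma:f_\gamma<f_\alpha\}$. Passing from the $\alpha$-restriction to the next one amounts to acting by a single $f_\alpha\otimes t^{d(\alpha,k,i)}$ on the $\alpha$-admissible positions; as in the fourth observation of Step 4 this should be governed, position by position, by the transition matrix $\mathcal{F}\to\mathcal{P}$ for a tensor product of evaluation fundamental $\msl_2$-modules, which is non-degenerate by the standard Vandermonde argument since the $\zeta_{k,i}$ are pairwise distinct. The exponents $d(\alpha,k,i)$ of Definition \ref{dpqki} are engineered so that each such increment is recorded by a $q$-binomial coefficient, exactly as in \eqref{factor}, and this is what produces the binomial factors and the quadratic form $b(\bar r)$ of \eqref{charaterC2}.

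The main obstacle will be the long root $f_{-2\epsilon_2}$ and its interaction with $f_{-2\epsilon_1}$: in contrast to the $\msl_{2n}$ situation a single entry $\mathcal{B}_{2,i}$ may carry the pair $\{f_{-2\epsilon_1},f_{-2\epsilon_2}\}$, which is precisely why the $(-2\epsilon_2)$-admissibility is defined to allow $\mathcal{B}'_{2,i}=\{f_{-2\epsilon_1}\}$ and not only the empty restriction. At the inductive step past $f_{-2\epsilon_2}$ I must therefore replace the naive product-of-$\msl_2$'s local model by the $\mathfrak{sp}_4$-module $V(\omega_2)$ and check that adding $f_{-2\epsilon_2}$ on top of an existing $f_{-2\epsilon_1}$ reaches the lowest weight vector with a nonzero coefficient, i.e.\ that the two-step chain on those positions does not degenerate; here the distinctness of the evaluation parameters together with the nonvanishing of the structure constant $f_{-2\epsilon_2}f_{-2\epsilon_1}v_{\omega_2}$ is what I would invoke. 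Once this single non-degeneracy is verified the remaining steps carry over, $\{\mathcal{F}(B)\}$ is a basis, and comparing its character with \eqref{charaterC2} upgrades that inequality to an equality exactly as in Theorem \ref{main}.
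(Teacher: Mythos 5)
Your overall plan is the one the paper intends---its proof of this Proposition consists of the single sentence that the argument is analogous to that of Proposition \ref{basisevaluation}---and you are right that the new feature to watch is the slot $\mathcal{B}_{2,i}=\{f_{-2\epsilon_1},f_{-2\epsilon_2}\}$ and the corresponding widening of $(-2\epsilon_2)$-admissibility. However, the subtlety you flag (nonvanishing of $f_{-2\epsilon_2}f_{-2\epsilon_1}v_{\omega_2}$) is the harmless one; the step that fails as you have written it is the Step-1 triangularity. You assert that rewriting any product $\prod f_\alpha v_{\omega_k}$ into normal form can only lower the multiset in the lexicographic order built from $f_{-2\epsilon_1}<f_{-\epsilon_2-\epsilon_1}<f_{-2\epsilon_2}<f_{-\epsilon_2+\epsilon_1}$. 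This is false: since $\epsilon_1+\epsilon_2$ is a short root of $C_2$ and $\langle\omega_2,(\epsilon_1+\epsilon_2)^\vee\rangle=2$, the vector $f_{-\epsilon_2-\epsilon_1}^{\,2}v_{\omega_2}$ is a \emph{nonzero} multiple of $f_{-2\epsilon_1}f_{-2\epsilon_2}v_{\omega_2}$, so a cross term of the coproduct expansion that drops two copies of $f_{-\epsilon_2-\epsilon_1}$ into one slot $(2,i)$ contributes the $\mathcal{P}$-basis vector labelled by $\{f_{-2\epsilon_1},f_{-2\epsilon_2}\}$, whose multiset is strictly \emph{larger} than $\{f_{-\epsilon_2-\epsilon_1},f_{-\epsilon_2-\epsilon_1}\}$ (its maximal element $f_{-2\epsilon_2}$ exceeds $f_{-\epsilon_2-\epsilon_1}$). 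Already for $\lambda=2\omega_2$ and $\mathcal{B}_{2,1}=\mathcal{B}_{2,2}=\{f_{-\epsilon_2-\epsilon_1}\}$ the expansion of $\mathcal{F}(B)$ contains such a term, so $\mathcal{F}(B)\notin\bigoplus_{M(\tilde B)\le M(B)}\mathbb{K}\mathcal{P}(\tilde B)$, and the filtration on which your Steps 2--4 rest collapses; worse, the escaping term involves $f_{-2\epsilon_2}$, an element not yet reached in your induction along the root order.

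This is precisely the phenomenon that the $\msl_n\subset\msl_{2n}$ doubling eliminates in type $A$: there every root vector squares to zero on a wedge power, and on the extremal weight spaces used each slot carries a multiplicity-free product $f_{1,l_1}\cdots f_{k,l_k}$, so normal-form rewriting genuinely only lowers the multiset. Dropping the doubling in the symplectic case is therefore not as innocent as you suggest. To repair the argument you must either change the order used for the multiset comparison so that $f_{-2\epsilon_1}$ and $f_{-2\epsilon_2}$ both precede $f_{-\epsilon_2-\epsilon_1}$---and then recheck that the restriction/admissibility combinatorics and the degrees $d(\alpha,k,i)$ still produce the $q$-binomials and the quadratic form $b(\bar r)$ of \eqref{charaterC2}---or give a separate argument controlling these forward-escaping cross terms. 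In rank two a direct verification is feasible, but it does not come for free from the type $A$ template.
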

\begin{proof}
The proof is analogous to the proof of Proposition \ref{basisevaluation}.
\end{proof}
Moreover by direct computations we obtain that the character of the set $\mathcal{F}(\mathbb{B})$
is equal to the left hand side of \eqref{charaterC2} multiplied by
$(q)_{\lambda}$. Thus we get the character formula for Weyl modules over  Lie algebras of type $C_2$.
In order to generalize the combinatorial construction to the higher rank symplectic algebras one
will need the monomial bases constructed in \cite{FFL2}.

\section*{Acknowledgments}
We are grateful to Alexander Braverman and Michael Finkelberg for useful discussions and explanations.
We thank Igor Makhlin for bringing our attention to \cite{MS}.
The work was partially supported by the grant RSF-DFG 16-41-01013.


\begin{thebibliography}{99}



\bibitem[BF1]{BF1}
A.Braverman, M.Finkelberg, {\it Weyl modules and $q$-Whittaker functions}, Math. Ann., vol. 359 (1),
2014, pp 45--59.

\bibitem[BF2]{BF2}
A.Braverman, M.Finkelberg, {\it Twisted zastava and $q$-Whittaker functions},  arXiv:1410.2365.

\bibitem[BF3]{BF3} A.Braverman, M.Finkelberg, {\it Semi-infinite Schubert varieties and quantum K-theory of flag manifolds},
J. Amer. Math. Soc. 27(2014), no. 4, 1147--1168.

\bibitem[BG]{BG}
A. Braverman and D. Gaitsgory, {\it Geometric Eisenstein series}, Invent.  Math. 150 (2002), 287--384.

\bibitem [Ch] {Ch}
{I.~Cherednik},
{\it Nonsymmetric Macdonald polynomials},
IMRN {10} (1995), 483--515.


\bibitem[CFK]{CFK}
V.~Chari, G.~Fourier, and T.~Khandai, {\it A categorical approach to Weyl modules},
Transform. Groups, 15(3):517--549, 2010.

\bibitem[CI]{CI}
V.Chari, B.Ion, {\it BGG reciprocity for current algebras}, Compos. Math. 151 (2015), pp. 1265--1287.

\bibitem [CL]{CL}
{V.~Chari}, {S.~Loktev},
{\it Weyl, Demazure and fusion modules for
the current algebra of $\msl_{r+1}$}, Adv. Math. 207 (2006), 928--960.

\bibitem [CP]{CP}
{V.~Chari}, {A.~Pressley},
{\it Weyl Modules for Classical and Quantum Affine Algebras}, Represent. Theory 5 (2001),
191–223.

\bibitem [FFL1] {FFL1}
{E.~Feigin}, and {G.~Fourier}, and {P.~Littelmann},
{\it PBW-filtration and bases for irreducible
modules in type $A_n$}, Transform. Groups { 16}:1 (2011), 71--89.

\bibitem [FFL2] {FFL2}
\bysame, \bysame, \bysame,
{\em  PBW filtration and bases for symplectic Lie algebras},
IMRN { 24} (2011), 5760--5784.

\bibitem[FL1]{FL1} G.~Fourier, P.~Littelmann,
{\it Tensor product structure of affine Demazure modules and limit constructions},
Nagoya Math. J. 182 (2006), 171--198.


\bibitem[FL2]{FL2} \bysame, and \bysame,
{\it Weyl modules, Demazure modules, KR-modules, crystals, fusion products and limit constructions},
Adv. Math. 211 (2007), no. 2, 566--593.


\bibitem[FM]{FM}
E.Feigin, I.Makedonskyi,
{\it Nonsymmetric Macdonald polynomials and PBW filtration: towards the proof of the Cherednik-Orr conjecture},
J. Combin. Theory, Ser. A (2015), pp. 60--84.

\bibitem[FiMi]{FiMi}
M.Finkelberg, I.Mirkovii\'c, {\it Semi-infinite flags I. Case of global curve $\bP^1$}.
In Differential topology, infinite-dimensional Lie algebras, and applications, volume 194 of
Amer. Math. Soc. Transl. Ser. 2, pages 81--112. Amer. Math. Soc., Providence, RI, 1999.


\bibitem[F]{F}
W.~Fulton, {\it Young Tableaux, with Applications to Representation Theory and Geometry.}
Cambridge University Press, 1997.

\bibitem [HHL] {HHL}
{M.~Haiman}, and {J.~Haglund}, and {N.~Loehr},
{\em A combinatorial formula for non-symmetric Macdonald polynomials},
Amer. J. Math. { 130}:2 (2008), 359--383.

\bibitem[I]{I} {B.~Ion},
{\em Nonsymmetric Macdonald polynomials and Demazure characters},
Duke Math. J. {116}:2 (2003), 299--318.


\bibitem[Kat]{Kat}
S.~Kato, {\it Demazure character formula for semi-infinite flag manifolds}, Math. Ann., 2018, 1-33.

\bibitem[KNS]{KNS}
S.~Kato, S.~Naito, D.~Sagaki, {\it Pieri-Chevalley type formula for equivariant $K$-theory of semi-infinite flag manifolds},
arXiv:1702.02408

\bibitem[Kum]{Kum}
S.~Kumar, {\it Kac-Moody groups, their flag varieties and representation theory},
Progr. Math., 204. Birkh{\" a}user Boston, Inc., Boston, MA, 2002.


\bibitem[M]{M}
I.G.~Macdonald, {\it Affine Hecke algebras and orthogonal polynomials}, S{\' e}minaire Bourbaki, Vol. 1994/95.
Ast{\' e}risque No. 237 (1996), Exp. No. 797, 4, 189--207.

\bibitem[Mu]{Mu}
M. Mustata, {\it Jet schemes of locally complete intersection canonical singularities},
with an appendix by David Eisenbud and Edward Frenkel, Invent. Math. 145 (2001) 397–424.


\bibitem[MS]{MS}
E. Miller B. Sturmfels, {\it Combin. Comm. Alg.}, 2005 Springer Science+Business Media, Inc.

\bibitem[N]{N}
K.Naoi, {\it Weyl modules, Demazure modules and finite crystals for non-simply laced type},
Adv. Math. 229 (2012), no. 2, 875--934.

\bibitem[S]{S} {Y.~Sanderson},
{\em On the Connection Between Macdonald Polynomials and
Demazure Characters}, J. Algebraic Combin., {11} (2000), 269--275.



\end{thebibliography}
\end{document}